\providecommand{\U}[1]{\protect\rule{.1in}{.1in}}
\newtheorem{theorem}{Theorem}
\newtheorem{corollary}[theorem]{Corollary}
\newtheorem{lemma}[theorem]{Lemma}
\newtheorem{proposition}[theorem]{Proposition}
\newtheorem{remark}[theorem]{Remark}
\def\R{\mathbb{R}}
\def\dest{\Delta^*}
\def\dist{\mathcal{D}'(\S)}
\def\RR{\mathbb{R}^3}
\def\R{\mathbb{R}}
\def\B{\mathbb{B}}
\def\S{\mathbb{S}}
\def\yl0{\mathcal{Y}_\ell^0}
\def\gradesf{\nabla_\sigma}
\def\vectL2{L^2(\S,\R^3)}
\def\ML2{L^2_{+}(\S,\R^3)}
\def\mL2{L^2_{-}(\S,\R^3)}
\def\0L2{L^2_{0}(\S,\R^3)}
\def\lpm{\bm{L}_-^p(\mathbb{S})}
\def\max{\mathcal{N}}
\def\esfarm{Y_{\ell,m}}
\title{{Hardy spaces for the Lam\'e equation}
\thanks{
The first and fourth authors were supported by Spanish grant
PID2021-
124195NB-C31, the second and the third by the Mexican grant PAPIIT-UNAM IN104224.}
\thanks{2000 {\it Math Subject Classification.} Primary:  42B30, 46E15, ; Secondary: 74B05. 
\newline
{\it Key words}: Hardy spaces, Elasticity theory, Lam\'e equation, Elastic Poisson kernel, Riesz systems, zonal multiplier. }}
\author{ Barcel\'o J.A., Per\'ez-Esteva S., Marmolejo-Olea E., Vilela M. C.}
\begin{document}
\maketitle


\begin{abstract}
We study, for $1 \leq p \leq \infty$, the Hardy space $\bm{h}_e^p(\B)$, the elastic analogue of the classical Hardy spaces of harmonic functions in the unit ball of $\mathbb{R}^3$. The space consists of vector-field solutions of the Lamé system satisfying the standard integrability condition on concentric spheres centered at the origin. Using the elastic Poisson kernel, we establish a Fatou-type theorem and show that $\bm{h}_e^p(\B)$ is isomorphic to the $\mathbb{R}^3$-valued Lebesgue space $L^p$ on the unit sphere for $1 < p \leq \infty$, while $\bm{h}_e^1(\B)$ corresponds to the space of $\mathbb{R}^3$-valued Borel measures on the unit sphere.  

For $1 < p < \infty$, we prove that $\bm{h}_e^p(\B)$ decomposes as the direct sum of  three subspaces. The main contribution of this paper is to describe each of these subspaces along with the corresponding spaces of boundary values. In particular, two of these spaces consist of solutions of the Lamé equation for all eligible choices of the  Lamé constants: one of them is the space of Riesz fields (solutions of the generalized Cauchy--Riemann equations) in $\bm{h}_e^p(\B)$; the second is the  space of fields given by the cross product of $x$ with such Riesz fields. The results rely on the classical decomposition of $L^2$ vector fields on the sphere into the direct sum of three spaces of vector spherical harmonics, which we extend to $L^p$. 
\end{abstract}
\section{Introduction}

The purpose of this paper is to study the structure of Hardy spaces of solutions of the Lam\'e equation  
\begin{equation*}
   \Delta^*\bm{u}=0
\end{equation*}
in the unit ball $\B$ in $\R^3$,
where 
\begin{equation*}
\Delta^*=\mu\Delta +(\lambda+\mu)\nabla div,
\end{equation*}
and the Lam\'e coefficients $\lambda$ and $\mu$
satisfy $ \mu>0$ and $2\mu +\lambda>0$, making $\Delta^*$ an elliptic operator acting on vector fields of $\R^3$ (see for example \cite{MitreaD}. These spaces, which we will denote by $\bm{h}^p_e(\B)$, are endowed with the classical norm from the Hardy space theory, consisting of the supremum of the  $\bm{L}^p$ norms on concentric spheres centered at the origin and radius less than one. Since for  $\lambda=-\mu$ the solution of the Lamé equations are identical to the vector-valued harmonic functions, then these spaces   generalize the Hardy spaces of harmonic functions (see \cite{Colzani T, Colzani, Axler}).  As expected,  the space of boundary limits of  elements in $\bm{h}^p_e(\B)$ is the $\R^3$-valued Lebesgue  $\bm{L}^p$ space in the sphere for $1<p\leq\infty$, and for  $\bm{h}^1_e(\B)$, it corresponds to  the space of borel measures in the unit sphere with values in $\R^3$. 

For the Lamé equation and other applications it is natural  to use of certain vector spherical harmonics (see  \cite{Freeden}) that provide a decomposition of the vector $\bm{L}^2(\mathbb{S})$ space into three orthogonal subspaces. We will prove that for $1<p<\infty$, a decomposition of the $\bm{L}^p$ space in the sphere holds as well, leading  via the elastic Poisson transform to a decomposition of $\bm{h}^p_e(\B)$ as a Banach direct product of three subspaces. The aim of this paper  is the precise description of each of these spaces. This will provide insight of the local structure of the Lamé equation. 

Remarkably, one of them will consist of Riesz fields,  namely fields $\bm{u}\in \bm{h}^p_e(\B) $ that satisfy the generalized Cauchy-Riemann equations $div\,\bm{u}=0$ and $\nabla\times \bm{u}=0$. It is well known the relevance of these fields in harmonic analysis. These vector fields are common elements of   $\bm{h}^p_e(\B)$ for every  admissible values of $\lambda$ and $\mu$. Moreover in this case, the vector field $x\times \bm{u}\in\bm{h}^p_e(\B)$  for such $\lambda$ and $\mu$.

The paper is organized as follows: In Sections 2 and 3 we will be dedicated to give definitions and to state all the preliminary results needed in the paper like Sobolev space and distributions in the sphere.  In Section 4 we will study the basic structure of the elastic Hardy spaces including the elastic Poisson transform and we will prove Fatou theorems for these Hardy spaces. In Section 5 we will review the mentioned vector-valued spherical harmonics decomposition  and we will prove the decompositions of the vector $\bm{L}^p$ spaces of the sphere. In Section 6 we complete the aim of this paper: to characterize the elastic extensions of vector fields on each of the $\bm{L}^p$ subspaces defined in Section 5. This will be achieved after describing in detail the three boundary $\bm{L}^p$ subspaces.   

\section{Hardy spaces for the Lam\'e operator}

In this section we  define the elastic Hardy spaces $\bm{h}^p_e(\B)$.  If $\bm{u}:\B\rightarrow\RR$, we say that $\bm{u} \in \bm{h}^p_e(\B)$, with $1\leq p<\infty$, if it is a solution of the Lamé system 
\begin{equation}
    \label{ecuacion}
    \dest \bm{u}=0,
\end{equation}
and satisfies
\begin{equation*}
\Vert \bm{u}\Vert_{\bm{h}^p_e}=\sup_{0\leq r<1}\left(\int_\S\vert \bm{u}(rw)\vert^p d\sigma(w)\right)^{1/p} < \infty ,
\end{equation*}
where $\mathbb{S}$ is the unit sphere in $\mathbb{R}^3$.
We say that $\bm{u} \in \bm{h}^\infty_e(\B)$ if it is a bounded solution of \eqref{ecuacion}.

The maximal elastic Hardy spaces 
are defined as follows:
for $\xi\in\S$ we define 

\begin{equation*}
 \Gamma_\xi=\B\cap conv \left(\lbrace\xi\rbrace\cup \lbrace x \in \mathbb{R}^3: \;  \vert x\vert\leq 1/2\rbrace\right),
 \end{equation*}
 where for $A$ a subset of $\R^3$, $conv(A)$ is the convex hull of $A$.

 We say that a solution $\bm{u}:\B\rightarrow\R ^3 $ of \eqref{ecuacion} belongs to $\bm{H}_e^ p(\mathbb{B}),\,  1\leq p \leq \infty,$ if 
\begin{equation}
\mathcal{N} \bm{u}(\xi)= \sup_{x \in \Gamma_\xi} | \bm{u}(x)|   
\end{equation}
belongs to $L^p(\S)$. Obviously $\bm{H}_e^p(\B)\subset \bm{h}_e^p(\B)$. 

The harmonic Hardy spaces in $\B$ are defined for scalar harmonic functions and are denoted by $h^p(\B)$ and $H^p(\B)$ same norms as before. 

\section{Preliminaries}

The following notation will be used throughout the paper: for non-negative quantities
X and Y we will use $X\lesssim Y$ ($X\gtrsim Y$) to
denote the existence of a positive constant $c$ such that $X\leq c\,Y$ ($X\geq c\,Y$). We will write
$X\sim Y$ if both $X\lesssim Y$ and $X\gtrsim Y$.

The following obvious estimate will be used throughout the paper.

\begin{equation}\label{integral}
 (1-r)\int_0^1 \frac{1}{(1-rt)^2}dt\leq C, 
\end{equation}
for some $C>0$ and all $r\in [0,1).$

Given a function $f$ on $\B$ we denote for every $r\in [0,1)$ the function $f_r$ defined on 
$\S$ by $f_r(\eta)=f(r\eta), \, \eta\in\S.$

We denote by  $ \bm{M}(\S) $ the space of $\R^3$-valued Borel measures in $\S$. Every $\bm{\mu}\in \bm{M}(\S)$ can be represented by $\bm{\mu}=(\mu_1,\mu_2,\mu_3),$
 where  each $\mu_i$ is a signed Borel measure in $\S$.  We define 
 \begin{equation*}
 \vert \bm{\mu}\vert=\sum_1^3 \vert \mu_i\vert
 \end{equation*}
 where  $\vert \mu_i\vert$ is the variation of the signed measure $\mu_i$.  Actually,
  $\vert \bm{\mu}\vert$ is the variation of the vector measure  $\bm{\mu}$  (see\cite{Diestel}) with respect to the norm $\vert \bm{x}\vert_1=\vert x_1\vert +\vert x_2\vert +\vert x_3\vert$ in $\R^3$.
  
Let $\lbrace Y_{\ell,m}\rbrace_{\ell=0,1, \cdots,  \; |m|\leq \ell},$
be an orthonormal basis  of real spherical harmonics for $L^2(\S)$ and let $\mathcal{H}^\ell$  denote the space of all spherical harmonics of degree $\ell$, which is generated by   $\{Y_{\ell,m}\}_{ |m| \leq \ell}$.
For any function $f$ on $L^1(\S)$ we denote $$\hat{f}_{\ell,m}=\int_\S f(\eta)Y_{\ell,m}(\eta)d\sigma(\eta).$$
We denote  by $\Delta_\sigma$ and  $\nabla_{\sigma}$, the Laplacian  (the Laplace-Beltrami operator)  and the gradient on $\S$. 
For a function $f$  defined on $\B$ we write $\Delta_{\sigma} f(x)$, where $x=|x|\frac{x}{|x|}=rx'$, for $\Delta_{\sigma} f(r\cdot)(x') $ and the same for the spherical gradient $\nabla_{\sigma}f(rx')=\nabla_{\sigma}f(r\cdot)(x').$  

 Recall that $\mathcal{H}^\ell$ is the eigenspace of $-\Delta_\sigma$ corresponding the  eigenvalue $\ell(\ell+1)$.
We have the estimates in $\ell$, (see for example \cite[page 225]{Neri}), 

\begin{equation}\label{estima ylm}
\Vert\esfarm \Vert_\infty=O(\ell^{1/2}) \text{ and } \Vert\nabla_\sigma\esfarm \Vert_\infty=O(\ell^{3/2}).
\end{equation}

For every $Y\in\mathcal{H}^\ell$ we denote its harmonic extension to  $\B$ by $Y(x)=r^\ell Y(x')$. If $f$ is a real function we define the vector function in $\S$,  $$\bm{f}^\vee(\xi) =f(\xi)\xi.$$
Then 
\begin{equation}\label{grad arm esf}
\nabla Y(x)= r^{\ell-1}\left(\ell \bm{Y}^\vee(x')+\nabla_{\sigma}Y(x')\right),
\end{equation}
where $\nabla$ denotes the gradient in $\R^3$.

For any sequence $\{\beta_\ell\}_{\ell \geq 0}$  in $\R$ we  define the zonal multiplier
 
\begin{equation}\label{multiplicadores}
\mathcal{M}_{\beta_\ell} (f)=\sum_{\ell \geq 0} \sum_{|m| \leq \ell} \beta_\ell a_{\ell,m}Y_{\ell,m}, 
\end{equation}
for $f=\sum_{\ell \geq 0} \sum_{|m| \leq \ell}a_{\ell,m}Y_{\ell,m}$ in the linear span of $\cup_{\ell 
 \geq 0}\mathcal{H}^\ell$.

Denote by  $\bm{C}^\infty(\S)$  the space of $C^\infty$-vector fields defined in the sphere, and by $\dist$  the space of distributions in $\S$, namely the space of continuous $\R$-linear functionals $f: C^\infty(\S)\rightarrow\R$. Likewise, denote  by $\bm{D}'(\mathbb{S})$,  the space of continuous $\R^3-$valued linear functions  on  $\bm{C}^\infty(\S)$.

Every  $f\in\dist$ can be written as
 $$f=\sum_{\ell \geq 0} \sum_{|m| \leq \ell} a_{\ell, m}Y_{\ell, m} $$ where  $ a_{\ell, m}=f(Y_{\ell, m})$  and the convergence is in the weak$^*$ topology. 
 The same holds for $\bm{f}\in \bm{D}'(\mathbb{S})$. In either case, there exist $N>0$  such that,  $ \vert a_{\ell, m}\vert\lesssim \ell ^N$, uniformly for $|m| \leq \ell,$ and this characterizes   $\dist $ and  $\bm{D}'(\mathbb{S})$.
 
 The action of such $f$ on $\varphi \in C^\infty(\S)$ is given by
 \begin{equation*}
 f(\varphi)=a_{0,0}\int_\S \varphi d\sigma+\int_{\S}\left( \sum_{\ell > 0} \sum_{|m| \leq \ell}   \frac{a_{\ell, m}}{(\ell(\ell+1))^M}Y_{\ell, m}\right)(-\Delta_\sigma)^{M}\varphi d\sigma
 \end{equation*}
 \begin{equation*}
 =a_{0,0}\int_\S \varphi d\sigma+\sum_{\ell > 0} \sum_{|m| \leq \ell}   \frac{a_{\ell, m}}{(\ell(\ell+1))^M}\int_{\S}Y_{\ell, m}(-\Delta_\sigma)^{M}\varphi d\sigma.
 \end{equation*}
 for $M$ large enough to  assure the convergence the series.

If the sequence $\{\beta_\ell\}_{\ell\geq 0}$ has polynomial growth  then $\mathcal{M}_{\beta_\ell}:\dist\rightarrow\dist.$ 
Let $E\subset\dist$ be a topological linear space  containing  all the zonal spaces  $\mathcal{H}^\ell$.
We say that $\mathcal{M}_{\beta_\ell}$ is a zonal multiplier for $E$  if $\mathcal{M}_{\beta_\ell}$ has a  continuous extension to $E$. In particular, for $M>0$, $(-\Delta_\sigma)^{M}$ is a zonal multiplier  for $\dist$ with the weak$^*$ topology, associated with the sequence $\left\{\ell^M\left( \ell +1 \right)^M\right\}_{\ell \geq 0}$.

We have the following result by R. S. Strichartz \cite{Str}.
 \begin{theorem}\label{Strichartz}
 Let $1<p<\infty$. If $h:(0, \infty) \rightarrow \mathbb{R}$ satisfies
\begin{equation}\label{Strichartj}
\left| x^k h^{(k)} (x) \right| \leq A, \hspace{0.5cm} x \in (0, \infty), \;\; k=0,1, \cdot \cdot \cdot,N,
\end{equation}
then $\mathcal{M}_{ h_\ell}$   is bounded in $L^p(\mathbb{S})$ for $\left| \frac{1}{2}-\frac{1}{p}  \right| < \frac{N}{2}$, where $h_\ell = h(\ell), \ell \in \mathbb{N}$.
\end{theorem}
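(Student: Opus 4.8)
The plan is to realize $\mathcal{M}_{h_\ell}$ as a zonal convolution operator on $\S$ and to control its kernel through a Littlewood--Paley decomposition in the frequency $\ell$, thereby reducing the statement to a multiplier theorem of H\"ormander--Mihlin type with limited smoothness. Writing $Z_\ell(\xi\cdot\eta)=\sum_{|m|\leq\ell}\esfarm(\xi)\esfarm(\eta)$ for the reproducing kernel of $\mathcal{H}^\ell$ (a constant multiple of the Legendre polynomial $P_\ell$, with $Z_\ell(1)\sim\ell$), the projection onto $\mathcal{H}^\ell$ is convolution against $Z_\ell$, so that
\[\mathcal{M}_{h_\ell}f(\xi)=\int_\S K(\xi\cdot\eta)f(\eta)\,d\sigma(\eta),\qquad K(t)=\sum_{\ell\geq 0}h(\ell)Z_\ell(t),\]
the series being read distributionally. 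The case $k=0$ of \eqref{Strichartj} gives $|h(\ell)|\leq A$, so by orthonormality $\mathcal{M}_{h_\ell}$ is bounded on $L^2(\S)$ with norm $\leq A$; this settles $p=2$ and provides the base estimate for the interpolation that follows.

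Next I would fix a dyadic partition of unity: choose $\psi\in C_c^\infty((1/2,2))$ with $\sum_{j\geq 1}\psi(2^{-j}x)=1$ for $x\geq 1$, set $h_j(\ell)=h(\ell)\psi(2^{-j}\ell)$ (the finitely many low frequencies being harmless), and let $T_j=\mathcal{M}_{(h_j)_\ell}$ with kernel $K_j(\cos\theta)=\sum_{\ell\sim 2^j}h_j(\ell)Z_\ell(\cos\theta)$, where $\theta$ is the geodesic distance. The heart of the matter is the uniform estimate
\[|K_j(\cos\theta)|\lesssim A\,2^{2j}\,(1+2^j\theta)^{-N},\qquad 0<\theta\leq\pi,\]
together with the companion bound for $\nabla_\sigma K_j$ carrying an extra factor $2^j$. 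One obtains it from the Laplace--Hilb asymptotics of $P_\ell(\cos\theta)$ (equivalently, the approximation of $Z_\ell$ by Bessel kernels near $\theta=0$) followed by $N$-fold summation by parts in $\ell$: each step turns a forward difference of $h_j$ into a gain $2^{-j}$, and the Mihlin bounds \eqref{Strichartj} are exactly what guarantee that the $k$-th order differences of $h_j$ are $O(A\,2^{-jk})$ for $k\leq N$. Thus $K_j$ is a bump of height $2^{2j}$ concentrated in the cap $\{\theta\lesssim 2^{-j}\}$ with tail decay of order $N$, i.e. a frequency-localized Calder\'on--Zygmund kernel at scale $2^{-j}$ in the $2$-dimensional manifold $\S$.

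Finally I would assemble the pieces. Since the effective dimension of $\S$ is $n=2$, the above estimate says that the unit-scale rescalings of the $K_j$ lie uniformly in the Sobolev class $L^2_N$ of multipliers; feeding the uniform $L^2$ bound on each $T_j$ and the kernel decay into the Littlewood--Paley inequality $\|(\sum_j|\tilde f_j|^2)^{1/2}\|_p\sim\|f\|_p$ for the Laplace--Beltrami operator (available from Stein's theory of symmetric diffusion semigroups), together with the limited-smoothness multiplier theorem of Calder\'on--Torchinsky type, yields boundedness of $\sum_j T_j=\mathcal{M}_{h_\ell}$ on $L^p(\S)$ precisely where the decay exponent $N$ dominates the dimensional growth, namely $|\tfrac12-\tfrac1p|<N/n=N/2$; the endpoint $p=2$ is the already-established $L^2$ bound and the openness of the condition reflects the loss at the borderline of the interpolation. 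I expect the kernel estimate of the second paragraph to be the main obstacle: controlling $\sum_{\ell\sim 2^j}h_j(\ell)Z_\ell(\cos\theta)$ uniformly down to $\theta\to 0$ requires the Legendre asymptotics in their full non-uniform form near the pole, careful bookkeeping of how the $N$ available derivatives of $h$ convert through summation by parts into the exponent that pins down the range of $p$, and handling the analogous concentration of $K_j$ at the antipode $\theta\to\pi$, which is the genuinely spherical feature absent in the Euclidean model.
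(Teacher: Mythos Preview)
The paper does not prove this theorem at all: it is stated as a result of R.~S.~Strichartz and simply cited from \cite{Str} (Trans.\ Amer.\ Math.\ Soc.\ \textbf{167} (1972), 115--124), with no argument given. Your sketch, by contrast, outlines a genuine proof along the lines of the original Strichartz argument (and of subsequent treatments such as Bonami--Clerc): realize $\mathcal{M}_{h_\ell}$ as a zonal convolution, perform a dyadic Littlewood--Paley decomposition in $\ell$, obtain pointwise decay for the localized kernels $K_j$ via Laplace--Hilb asymptotics for Legendre polynomials together with summation by parts driven by the Mihlin bounds \eqref{Strichartj}, and conclude by interpolation around the $L^2$ base case. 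That is a correct strategy and is essentially how the cited theorem is proved; note only that the third step, as you anticipate, requires care with the uniform Legendre asymptotics near both $\theta=0$ and $\theta=\pi$, and that invoking a Calder\'on--Torchinsky-type theorem on $\S$ amounts to redoing a substantial portion of Strichartz's work, so in practice one argues directly from the kernel bounds and the $g$-function machinery rather than citing a black-box multiplier theorem. For the purposes of this paper, however, a citation suffices.
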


The following result, (see \cite{Bakry}), it will be used throughout the paper.

\begin{theorem}\label{theorem_bakry} (D. Bakry) 
 Let  $1 <p < \infty$ and $f\in C^1(\S)$. Then
\begin{equation}  \label{bakry}
  \| \left( -\Delta_\sigma  \right)^{1/2} f \|_{L^p} \sim  \| \nabla_\sigma f \|_{\bm{L}^p} .  
\end{equation}

\end{theorem}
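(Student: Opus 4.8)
The plan is to read the equivalence \eqref{bakry} as the two-sided $L^p$-boundedness of the Riesz transform $R=\nabla_\sigma(-\Delta_\sigma)^{-1/2}$ acting on mean-zero functions (on constants both sides of \eqref{bakry} vanish, so it is enough to treat $f\perp\mathcal H^0$). Writing $g=(-\Delta_\sigma)^{1/2}f$, the first inequality $\|\nabla_\sigma f\|_{\bm L^p}\lesssim\|(-\Delta_\sigma)^{1/2}f\|_{L^p}$ is exactly $\|Rg\|_{\bm L^p}\lesssim\|g\|_{L^p}$. For the reverse inequality, use $\Delta_\sigma=\mathrm{div}_\sigma\nabla_\sigma$ to write $(-\Delta_\sigma)^{1/2}f=(-\Delta_\sigma)^{-1/2}(-\Delta_\sigma)f=R^\ast(\nabla_\sigma f)$, where $R^\ast=-(-\Delta_\sigma)^{-1/2}\mathrm{div}_\sigma$ is the formal adjoint of $R$; hence the reverse inequality follows once $R^\ast$ is bounded on $L^p$, i.e. once $R$ is bounded on $L^{p'}$. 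Thus both directions reduce to proving $\|R\|_{L^q\to\bm L^q}<\infty$ for all $1<q<\infty$. A purely scalar reduction via the zonal multiplier theorem (Theorem \ref{Strichartz}) is not available here because $\nabla_\sigma$ is genuinely vector-valued, which is what forces the semigroup approach below.

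First I would set up the heat semigroup $P_t=e^{t\Delta_\sigma}$, a symmetric Markov diffusion semigroup on $L^2(\S)$, together with its carr\'e du champ $\Gamma(f)=\tfrac12\big(\Delta_\sigma(f^2)-2f\,\Delta_\sigma f\big)=|\nabla_\sigma f|^2$ and the iterated form $\Gamma_2(f)=\tfrac12\big(\Delta_\sigma\Gamma(f)-2\,\Gamma(f,\Delta_\sigma f)\big)$. The geometry enters through Bochner's formula on the round sphere, $\Gamma_2(f)=|\mathrm{Hess}\,f|^2+\mathrm{Ric}(\nabla_\sigma f,\nabla_\sigma f)$: since $\S\subset\R^3$ has constant Ricci curvature equal to $1$, this yields the Bakry--\'Emery curvature condition $\Gamma_2\ge\Gamma$, i.e. $CD(1,\infty)$. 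The quantitative consequence I will actually use is the pointwise commutation (reverse gradient) estimate $|\nabla_\sigma P_tf|^2\le e^{-2t}\,P_t\big(|\nabla_\sigma f|^2\big)$, valid for all $t>0$.

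The analytic core is then a Littlewood--Paley--Stein argument. Passing to the subordinated Poisson semigroup $Q_t=e^{-t\sqrt{-\Delta_\sigma}}$, Stein's theory of symmetric diffusion semigroups gives $\|g\|_{L^p}\sim\|H(g)\|_{L^p}$ for the horizontal square function $H(g)=\big(\int_0^\infty|\partial_tQ_tg|^2\,t\,dt\big)^{1/2}$. Transferring the commutation estimate to $Q_t$ through subordination, one compares $H$ with the vertical square function $V(g)=\big(\int_0^\infty|\nabla_\sigma Q_tg|^2\,t\,dt\big)^{1/2}$, and the bound $\|V(g)\|_{L^p}\lesssim\|g\|_{L^p}$ yields $\|Rg\|_{\bm L^p}\lesssim\|g\|_{L^p}$.

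I expect the vertical square-function estimate to be the main obstacle, since it is there that the $CD(1,\infty)$ hypothesis is consumed: for $p\ge2$ it can be obtained rather directly by integrating the $\Gamma_2$ inequality tested against $|\nabla_\sigma Q_tg|^{p-2}$, whereas the range $1<p<2$ requires either a duality passage or a vector-valued Calder\'on--Zygmund / good-$\lambda$ argument, which is the technically delicate heart of Bakry's method. Once $R$ is shown to be bounded on every $L^q$, $1<q<\infty$, the two reductions of the first paragraph deliver both inequalities in \eqref{bakry} and hence the stated equivalence.
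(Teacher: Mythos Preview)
The paper does not prove this theorem at all: it is quoted as a black box with the citation ``(see \cite{Bakry})'' and no proof environment follows. So there is nothing to compare your argument against in the paper itself.

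That said, your outline is a faithful sketch of Bakry's original approach and is correct in its architecture: reduce both directions of \eqref{bakry} to the $L^q$-boundedness of the Riesz transform $R=\nabla_\sigma(-\Delta_\sigma)^{-1/2}$ (and its adjoint) for all $1<q<\infty$, then prove that boundedness via the heat/Poisson semigroup, the Bakry--\'Emery $\Gamma_2$-calculus, the $CD(1,\infty)$ condition coming from $\mathrm{Ric}\equiv 1$ on $\S$, and Stein's Littlewood--Paley theory for symmetric diffusion semigroups. Your identification of the vertical square-function bound for $1<p<2$ as the delicate step is also accurate. If you were actually writing this up rather than citing it, you would need to flesh out the subordination step (passing the gradient commutation from $e^{t\Delta_\sigma}$ to $e^{-t\sqrt{-\Delta_\sigma}}$) and the duality/good-$\lambda$ argument for small $p$; as a proof \emph{plan} it is sound, but in the context of this paper the honest move is simply to cite \cite{Bakry}, which is exactly what the authors do.
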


Define the Sobolev space $W^{1,p}(\S)$ as the space of the functions   $f\in L^p(\mathbb{S})$  such that $(-\Delta_\sigma)^{1/2} f\in L^p(\S).$

\begin{lemma} \label{lema_3}
Let $1< p<\infty$ and  $f\in  L^p(\S),$ then $f\in W^{1,p}(\S)$ if and only if there exists a sequence $\varphi_n\in C^\infty(\S)$ such that $\varphi_n\rightarrow f$ in $L^p(\S)$ and $\nabla_{\sigma}\varphi_n\rightarrow \bm{F}$ in $\bm{L}^p(\S)$ for some vector field $\bm{F}\in \bm{L}^p(\S)$.
In this case we denote $\nabla_\sigma f= \bm{F}$ (which is independent of the sequence $\varphi_n$).

Also
\begin{equation}\label{Bakry en Sobolev}
    \Vert  (-\Delta_\sigma)^{1/2} f \Vert_{L^p} \sim \Vert \nabla_\sigma f \Vert_{\bm{L}^p}   .
\end{equation}
for  $f\in W^{1,p}(\S).$
\end{lemma}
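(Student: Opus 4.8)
The plan is to combine a spectral smoothing of $f$ with Bakry's theorem (Theorem \ref{theorem_bakry}) and the reflexivity of $L^p$. Throughout I use that $(-\Delta_\sigma)^{1/2}$ is the zonal multiplier associated to $\{(\ell(\ell+1))^{1/2}\}$ and that the heat semigroup $e^{t\Delta_\sigma}$ is the zonal multiplier associated to $\{e^{-t\ell(\ell+1)}\}$; both are functions of $-\Delta_\sigma$ and hence commute, while $e^{t\Delta_\sigma}$ maps $L^p(\mathbb{S})$ into $C^\infty(\mathbb{S})$, is a contraction on each $L^p(\mathbb{S})$, and satisfies $e^{t\Delta_\sigma}g\to g$ in $L^p(\mathbb{S})$ as $t\to 0^+$ for every $g\in L^p(\mathbb{S})$ (any standard smoothing, e.g. the Abel--Poisson means, would serve equally well). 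For the forward implication, let $f\in W^{1,p}(\mathbb{S})$, so $g:=(-\Delta_\sigma)^{1/2}f\in L^p(\mathbb{S})$, and set $\varphi_t=e^{t\Delta_\sigma}f\in C^\infty(\mathbb{S})$, so that $\varphi_t\to f$ in $L^p(\mathbb{S})$. For $t,t'>0$ the function $\varphi_t-\varphi_{t'}$ is smooth, and Bakry's theorem together with the commutation $(-\Delta_\sigma)^{1/2}e^{t\Delta_\sigma}f=e^{t\Delta_\sigma}g$ yields
\[
\|\nabla_\sigma\varphi_t-\nabla_\sigma\varphi_{t'}\|_{\bm{L}^p}\sim\|(-\Delta_\sigma)^{1/2}(\varphi_t-\varphi_{t'})\|_{L^p}=\|e^{t\Delta_\sigma}g-e^{t'\Delta_\sigma}g\|_{L^p},
\]
whose right-hand side tends to $0$ as $t,t'\to 0^+$ since $e^{t\Delta_\sigma}g\to g$ in $L^p$. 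Hence $\{\nabla_\sigma\varphi_t\}$ is Cauchy in $\bm{L}^p(\mathbb{S})$ and converges to some $\bm{F}\in\bm{L}^p(\mathbb{S})$; taking $t_n\to 0$ produces the required sequence.

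For the converse, suppose $\varphi_n\in C^\infty(\mathbb{S})$ with $\varphi_n\to f$ in $L^p$ and $\nabla_\sigma\varphi_n\to\bm{F}$ in $\bm{L}^p$. By Bakry's theorem $\|(-\Delta_\sigma)^{1/2}\varphi_n\|_{L^p}\sim\|\nabla_\sigma\varphi_n\|_{\bm{L}^p}$ is a bounded sequence. Since $1<p<\infty$, $L^p(\mathbb{S})$ is reflexive, so some subsequence $(-\Delta_\sigma)^{1/2}\varphi_{n_k}$ converges weakly in $L^p$ to an element $h\in L^p(\mathbb{S})$. On the other hand $\varphi_n\to f$ in $\mathcal{D}'(\mathbb{S})$, and since $(-\Delta_\sigma)^{1/2}$ is continuous on $\mathcal{D}'(\mathbb{S})$ we get $(-\Delta_\sigma)^{1/2}\varphi_n\to(-\Delta_\sigma)^{1/2}f$ in $\mathcal{D}'(\mathbb{S})$. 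Weak $L^p$ convergence also implies convergence in $\mathcal{D}'(\mathbb{S})$, so by uniqueness of distributional limits $h=(-\Delta_\sigma)^{1/2}f$; thus $(-\Delta_\sigma)^{1/2}f=h\in L^p(\mathbb{S})$ and $f\in W^{1,p}(\mathbb{S})$.

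For the independence of the limit, I use the integration-by-parts identity on the closed manifold $\mathbb{S}$, namely $\int_\S\nabla_\sigma\varphi\cdot\bm{\Phi}\,d\sigma=-\int_\S\varphi\,\mathrm{div}_\sigma\bm{\Phi}\,d\sigma$ for $\varphi\in C^\infty(\mathbb{S})$ and $\bm{\Phi}\in\bm{C}^\infty(\mathbb{S})$; letting $n\to\infty$ shows that any limit $\bm{F}$ satisfies $\int_\S\bm{F}\cdot\bm{\Phi}\,d\sigma=-\int_\S f\,\mathrm{div}_\sigma\bm{\Phi}\,d\sigma$ for every $\bm{\Phi}$, and since smooth vector fields are dense in $\bm{L}^{p'}(\mathbb{S})$ this characterizes $\bm{F}$ uniquely, justifying the notation $\nabla_\sigma f=\bm{F}$. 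Finally, for the equivalence \eqref{Bakry en Sobolev} on $W^{1,p}(\mathbb{S})$ I apply the forward-direction sequence $\varphi_t$: there $(-\Delta_\sigma)^{1/2}\varphi_t=e^{t\Delta_\sigma}g\to(-\Delta_\sigma)^{1/2}f$ in $L^p$ and $\nabla_\sigma\varphi_t\to\nabla_\sigma f$ in $\bm{L}^p$, so both sides of the $t$-level Bakry equivalence converge to the corresponding norms, and \eqref{Bakry en Sobolev} follows by passing to the limit, the comparison constants being those of Theorem \ref{theorem_bakry}, independent of $t$.

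I expect the delicate points to be the spectral bookkeeping rather than any analytic difficulty: one must be sure that $(-\Delta_\sigma)^{1/2}$ commutes with the smoothing $e^{t\Delta_\sigma}$ and is continuous on $\mathcal{D}'(\mathbb{S})$, and that the weak $L^p$ limit obtained by reflexivity in the converse is correctly identified with the distributional object $(-\Delta_\sigma)^{1/2}f$. Once these are in place, the argument rests entirely on the two a priori estimates furnished by Bakry's theorem, the strong $L^p$-continuity of the heat semigroup, and reflexivity of $L^p$.
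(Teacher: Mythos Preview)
Your argument is correct and follows essentially the same strategy as the paper: smooth $f$ by a zonal semigroup that commutes with $(-\Delta_\sigma)^{1/2}$ (the paper uses the Poisson means $P_{r_n}$, you use the heat semigroup, and you rightly note these are interchangeable), then transfer Bakry's $C^1$ equivalence to $W^{1,p}$ by passing to the limit. The one place your route is longer than necessary is the converse: you bound $\|(-\Delta_\sigma)^{1/2}\varphi_n\|_{L^p}$ and invoke reflexivity to extract a weak subsequential limit, but since $\nabla_\sigma\varphi_n$ actually \emph{converges} in $\bm{L}^p$, Bakry applied to the differences $\varphi_n-\varphi_m$ shows directly that $(-\Delta_\sigma)^{1/2}\varphi_n$ is Cauchy and hence strongly convergent in $L^p$---this is what the paper does, and it avoids the reflexivity/subsequence step entirely. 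Your additional verification that $\bm{F}$ is independent of the approximating sequence, via integration by parts against smooth test fields, is a welcome detail that the paper leaves implicit.
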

\begin{proof}
Let $f\in W^{1,p}(\S)$.  There exists a sequence $\varphi_n\in C^\infty(\S)$ such that $(-\Delta_\sigma)^{1/2} \varphi_n\rightarrow (-\Delta_\sigma)^{1/2}f$ in $L^p(\S)$ (take for example $ \varphi_n=P_{r_n} f$ where $P_r $ is the Poisson transform in the sphere (see \eqref{Poisson_operador} in Section \ref{section_3}) and $r_n\nearrow 1$). By \eqref{bakry}  
$\nabla_\sigma\varphi_n$ is a Cauchy sequence in $\bm{L}^p(\S)$ converging to a vector field $\bm{F} \in \bm{L}^p(\S)$. Conversely if $\varphi_n\in C^\infty(\S)$ converges to $f$ in $L^p(\mathbb{S}) $ and $\nabla_{\sigma}\varphi_n\rightarrow \bm{F}$ in $\bm{L}^p(\mathbb{S})$ norm,  then  by \eqref{bakry}  $(-\Delta_\sigma)^{1/2}\varphi_n$ converges in $L^p(\mathbb{S})$ and converges to $(-\Delta_\sigma)^{1/2} f$ in $\dist$, implying that
$(-\Delta_\sigma)^{1/2} f\in L^p(\S)$. It follows that \eqref{Bakry en Sobolev}  holds  for $ f\in W^{1,p}(\S)$.
\end{proof}

 \begin{lemma}\label{suma de grads esf}
 Let $g\in W^{1,p}(\S)$, $1<p<\infty$,  with $g=\sum_{\ell \geq 0} \sum_{|m| \leq \ell} a_{\ell, m}Y_{\ell, m}$ in $\dist$. Then
 \begin{equation*}
     \gradesf g=\sum_{\ell \geq 0} \sum_{|m| \leq \ell} a_{\ell, m}\gradesf Y_{\ell, m}\quad \text{in }\bm{D}'(\mathbb{S}).
 \end{equation*}
 \end{lemma}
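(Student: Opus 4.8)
The plan is to reduce the identity to the weak$^*$ continuity of $\gradesf$ on $\dist$, after first reconciling the two meanings of "$\gradesf g$" in play: the $\bm{L}^p$ field produced by Lemma~\ref{lema_3}, and the distributional gradient defined by duality. Recall that $\gradesf$ acts on $\dist$ as (minus) the adjoint of the surface divergence, namely $\gradesf f(\bm{\psi}) = -f(\mathrm{div}_\sigma \bm{\psi})$ for $f\in\dist$ and any test field $\bm{\psi}\in\bm{C}^\infty(\S)$; this is well defined because $\mathrm{div}_\sigma \bm{\psi}\in C^\infty(\S)$, and it sends $\dist$ into $\bm{D}'(\mathbb{S})$.

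First I would check that the field $\bm{F}=\gradesf g\in\bm{L}^p(\S)$ furnished by Lemma~\ref{lema_3} is exactly this distributional gradient. Take the approximating sequence $\varphi_n\in C^\infty(\S)$ with $\varphi_n\to g$ in $L^p(\S)$ and $\gradesf\varphi_n\to\bm{F}$ in $\bm{L}^p(\S)$. Since $L^p(\S)\hookrightarrow\dist$ and $\bm{L}^p(\S)\hookrightarrow\bm{D}'(\mathbb{S})$ continuously, and since the classical integration-by-parts identity $\int_\S \gradesf\varphi_n\cdot\bm{\psi}\,d\sigma = -\int_\S \varphi_n\,\mathrm{div}_\sigma\bm{\psi}\,d\sigma$ holds for each smooth $\varphi_n$ (no boundary term, as $\S$ is closed), letting $n\to\infty$ gives $\bm{F}(\bm{\psi}) = -g(\mathrm{div}_\sigma\bm{\psi})$ for every $\bm{\psi}\in\bm{C}^\infty(\S)$. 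Hence $\bm{F}$ coincides with the distributional gradient $\gradesf g$, and in particular it does not depend on the chosen sequence.

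Finally I would invoke weak$^*$ continuity. As recalled in Section 3, the expansion $g=\sum_{\ell\geq 0}\sum_{|m|\leq\ell} a_{\ell,m}Y_{\ell,m}$ converges to $g$ in the weak$^*$ topology of $\dist$, with $a_{\ell,m}=\hat g_{\ell,m}$. For the partial sums $S_L=\sum_{\ell\leq L}\sum_{|m|\leq\ell} a_{\ell,m}Y_{\ell,m}$ one has, for every test field $\bm{\psi}$, the chain $\gradesf S_L(\bm{\psi}) = -S_L(\mathrm{div}_\sigma\bm{\psi}) \to -g(\mathrm{div}_\sigma\bm{\psi}) = \gradesf g(\bm{\psi})$, using that $\mathrm{div}_\sigma\bm{\psi}\in C^\infty(\S)$ is a legitimate test function for the weak$^*$ limit in $\dist$. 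Since each $Y_{\ell,m}$ is smooth, $\gradesf S_L=\sum_{\ell\leq L}\sum_{|m|\leq\ell} a_{\ell,m}\gradesf Y_{\ell,m}$, and the display above says precisely that these partial sums converge to $\gradesf g$ in $\bm{D}'(\mathbb{S})$. This yields the claimed identity $\gradesf g=\sum_{\ell\geq 0}\sum_{|m|\leq\ell} a_{\ell,m}\gradesf Y_{\ell,m}$ in $\bm{D}'(\mathbb{S})$.

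The substantive point is the definitional reconciliation in the second step, i.e.\ verifying that the $\bm{L}^p$ object of Lemma~\ref{lema_3} is genuinely the dual-defined gradient; once that is secured, the weak$^*$ continuity of $\gradesf$ is essentially automatic, because passing the limit through the pairing only requires that $\mathrm{div}_\sigma\bm{\psi}$ be a smooth scalar test function, which it always is. The polynomial coefficient bound $|a_{\ell,m}|\lesssim \ell^N$ from Section 3 guarantees that all the series involved are bona fide elements of $\dist$ and $\bm{D}'(\mathbb{S})$, so no additional convergence estimates are needed.
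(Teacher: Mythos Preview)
Your proof is correct and takes a genuinely different route from the paper's. The paper proceeds in two stages: first it handles $g\in C^\infty(\S)$ by invoking Bakry's inequality \eqref{bakry} to get $\bm{L}^p$-convergence of the partial sums of gradients, and then for general $g\in W^{1,p}(\S)$ it approximates by smooth $g_k$, writes the pairings $I_k=\langle\gradesf g_k,\bm{h}\rangle$ and $I=\sum a_{\ell,m}\langle\gradesf Y_{\ell,m},\bm{h}\rangle$ explicitly via the divergence theorem on $\S$, and shows $I_k\to I$ by dominated convergence using the bound $|a_{\ell,m}^k|=O(\ell^{1/2})$. Your argument is more conceptual: by introducing the distributional gradient via duality with $\mathrm{div}_\sigma$ and checking (through a single integration by parts on the approximants from Lemma~\ref{lema_3}) that it agrees with the $\bm{L}^p$ object defined there, you reduce the lemma to the automatic weak$^*$ continuity of a transpose operator. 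This bypasses Bakry's theorem and the dominated-convergence bookkeeping entirely. The one place to be slightly more explicit is that $\mathrm{div}_\sigma$ is really acting on the tangential part $\bm{\psi}_T$ of the test field (as the paper spells out), since $\gradesf\varphi_n$ is tangential; this is a harmless normalization and does not affect your argument.
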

\begin{proof}
    First notice that the lemma  is equivalent to prove that 
    \begin{equation*}
  \langle  \gradesf g,\bm{h}\rangle=\sum_{\ell \geq 0} \sum_{|m| \leq \ell} a_{\ell, m} \langle\gradesf Y_{\ell, m},\bm{h}\rangle.
\end{equation*} for any $\bm{h}\in \bm{C}^\infty(\S)$.
    
    If $g\in C^\infty(\S)$ then $g=\sum_{\ell \geq 0} \sum_{|m| \leq \ell} a_{\ell, m}Y_{\ell, m}$ in $C^\infty(\S)$ and  by  \eqref{bakry}
    \begin{equation*}
    \Vert \gradesf g-   \sum_{\ell =0}^L \sum_{|m| \leq \ell} a_{\ell, m}\gradesf Y_{\ell, m}\Vert_{\bm{L}^p}\leq  C\Vert (-\Delta_\sigma)^{1/2} (g-   \sum_{\ell =0}^L \sum_{|m| \leq \ell} a_{\ell, m} Y_{\ell, m})\Vert_{L^p}\rightarrow 0
    \end{equation*}
    and the lemma follows in this case.
Now let    $g\in W^{1,p}(\S)$ and a sequence $g_k\in C^\infty(\S)$ converging to $g$  in $W^{1,p}(\S)$.  Then

\begin{equation}\label{ksuma grads}
    \gradesf g_k\rightarrow \gradesf g \text{ in } \bm{L}^p(\S).
\end{equation}
Let
\begin{equation*}
   g_k=\sum_{\ell \geq 0} \sum_{|m| \leq \ell} a_{\ell, m}^kY_{\ell, m},
\end{equation*}
and 
\begin{equation*}
   \gradesf g_k=\sum_{\ell \geq 0} \sum_{|m| \leq \ell} a_{\ell, m}^k\gradesf Y_{\ell, m}.
\end{equation*}
Also, since $g_k\rightarrow g$ in $L^p(\S)$, then $lim_{k\rightarrow\infty} a_{\ell, m}^k=a_{\ell, m}.$

Now let $\bm{h}\in \bm{C}^\infty(\S)$ a vector field. Denote $\bm{h}_T$ the tangential component of $\bm{h}$, namely
\begin{equation*}
    \bm{h}_T(x')=\pi_{x'^\perp}\bm{h}(x')\quad x'\in \S,
\end{equation*}
where $\pi_{x'^\perp} $ is the orthogonal projection of $\bm{h}$ in the tangent space of $\bm{h}$ at $x'$.

Let \begin{equation*}
    I_k=\sum_{\ell > 0} \sum_{| m | \leq \ell} a_{\ell, m}^k\int_\S\gradesf Y_{\ell, m}\cdot \bm{h} d\sigma
=\sum_{\ell > 0} \sum_{| m | \leq \ell} a_{\ell, m}^k\int_\S\gradesf Y_{\ell, m}\cdot \bm{h}_T d\sigma
\end{equation*}

and
\begin{equation*}
    I=\sum_{\ell \geq 0} \sum_{| m | \leq \ell} a_{\ell, m}\int_\S\gradesf Y_{\ell, m}\cdot \bm{h} d\sigma=\sum_{\ell \geq 0} \sum_{| m | \leq \ell} a_{\ell, m}\int_\S\gradesf Y_{\ell, m}\cdot \bm{h}_T d\sigma.
\end{equation*}

Then by the divergence theorem in $\S$,

\begin{equation*}
  I_k=  -\sum_{\ell > 0} \sum_{| m | \leq \ell} a_{\ell, m}^k\int_\S Y_{\ell, m} \,div_\sigma \bm{h}_T d\sigma,
\end{equation*}

\begin{equation*}
    = -\sum_{\ell > 0} \sum_{| m | \leq \ell} \frac{a_{\ell, m}^k}{\ell(\ell+1)}\int_\S Y_{\ell, m} \,(-\Delta_\sigma)\, div_\sigma \bm{h}_T d\sigma
\end{equation*}
and

\begin{equation*}
    I=-\sum_{\ell > 0} \sum_{| m | \leq \ell} \frac{a_{\ell, m}}{\ell(\ell+1)}\int_\S Y_{\ell, m} \,(-\Delta_\sigma)\, div_\sigma \bm{h}_T d\sigma,
\end{equation*}
where $div_\sigma$ denotes the divergence in the sphere.
By \eqref{estima ylm} we have that $$\left| a_{\ell, m}^k\right|=\left|\int_\S g\,Y_{\ell,m}d\sigma\right| =O(\ell^{1/2} ).$$
Then $I_k$ converges to $I$ by the dominated convergence theorem. This together with \eqref{ksuma grads} implies

\begin{equation*}
  \langle  \gradesf g,\bm{h}\rangle=\sum_{\ell \geq 0} \sum_{|m| \leq \ell} a_{\ell, m} \langle\gradesf Y_{\ell, m},\bm{h}\rangle.
\end{equation*}
\end{proof}

\section{Elastic Poisson kernel in the unit ball  and boundary functions}\label{section_3}

Following Kupradze (see \cite[Ch. XIV, 1.26]{Ku}),  
the elastic Poisson kernel is the matrix-valued function 
$$\mathcal{\bm{P}}_e: \mathbb{B}  \times   \mathbb{S}  \rightarrow M_{3\times 3}(\mathbb{R}),$$ which can be seen as a perturbation of the Poisson kernel for harmonic functions, $P(x,\eta)$,  in $\mathbb{B}$, that is, 
\begin{equation}\label{ja_poisson_1}
 \mathcal{\bm{P}}_e(x,\eta)= P(x,\eta)\mathcal{I}+ \mathcal{L}(x,\eta),
\end{equation}
where $\mathcal{I}$ is the identity matrix,
\begin{equation}\label{ja_poisson_2}
P(x,\eta)=\frac{1}{4\pi}\frac{1-\vert x\vert^2}{\vert x-\eta\vert^3},
\end{equation}
and 
\begin{equation}\label{ja_poisson_3}  
  \mathcal{L}_{ij}(x,\eta)=\beta(1-\vert x\vert^2)\frac{\partial^2 \Phi}{\partial x_i\partial x_j}(x,\eta), \;\; i,j \in \{1,2,3 \},
  \end{equation}
with
\begin{equation}\label{ja_poisson_4}
  \Phi(x,\eta)=\frac{1}{4\pi}\int_0^1\left(\frac{1-t^2\vert x\vert^2}{(1-2tx\cdot \eta+t^2\vert x\vert^2)^{3/2}}-1-3tx\cdot\eta\right)\frac{dt}{t^{1+\alpha}}
  \end{equation}and 
  \begin{equation*}
  \beta=\frac{\lambda+\mu}{2\lambda+6\mu},\qquad \alpha=\frac{\lambda+2\mu}{\lambda+3\mu}.
   \end{equation*} 

   Notice that for $x$ and $\eta$ fixed, we have $4 \pi P(tx,\eta)-1-3tx\cdot\eta=O(t)$ so, since $\alpha \in (0,1)$,   $\Phi$ is well defined. Furthermore, by (\ref{acotacion_poisson_e_3}) and (\ref{acotacion_poisson_e_2}) of the appendix, $\frac{\partial^2 \Phi}{\partial x_i\partial x_j}(x,\eta)$ is also well defined.
   
The Poisson transform $P$ is defined by
\begin{equation}\label{Poisson_operador}
    Pf(x)=\int_{\S} P(t \xi, \eta) f(\eta) d\sigma  (\eta), \;\;\; f \in L^1(\S), \;\; x=t\xi\in\B.  
   \end{equation}

   We define for $ t  \in (0,1)$   
   \begin{equation}\label{Poisson_operador1}
    P_tf(\xi)=Pf(t\xi).    
   \end{equation}

It is known that, (see for example \cite[page 99]{Axler}),
\begin{equation}\label{Poisson_armonicos}
P(t \xi,\eta)=\sum_{\ell \geq 0} \sum_{|m| \leq \ell} t^\ell Y_{\ell, m}(\xi)  Y_{\ell, m}(\eta),     
\end{equation}
which allows us to see $P_t$ as a zonal multiplier associated to the sequence $\{t^\ell \}_{\ell =0,1,2, \cdot }$.  It is well known that $P_t$ is bounded in $L^p(\mathbb{S})$ for $1 \leq p \leq \infty $. 

The Hardy-Littlewood maximal function of $f \in L^p(\S)$, denoted by ${M}[f]$, is the function on $\S$ defined by
\begin{equation*}
 {M}   [f](\xi) = \sup_{\delta >0} \frac{1}{\sigma (\kappa (\xi, \delta))} \int_{\kappa (\xi, \delta)} |f(\eta)| d \sigma (\eta), 
\end{equation*}
where $\kappa (\xi, \delta)= \{ \eta \in \S: \; |\eta - \xi|< \delta \}$.

${M}$ is bounded on $L^p(\S)$ for $p \in (1, \infty)$ and weak type $(1,1)$.
It is a standard fact  ( see \cite[Ch VI, Sect 1]{Stein}) that if $f\in L^1(\S)$ then

\begin{equation}\label{ntmaximal vs maximal}
    \mathcal{N}(Pf)(\zeta)\leq {M}[f](\zeta),\; \;  \zeta\in\S.
\end{equation}
The elastic Poisson transform  $\bm{P}_e$ is defined by

\begin{equation*}
    \bm{P}_e \bm{f}(x)=\int_{\S} \mathcal{P}_e (x, \eta)  \bm{f} (\eta) d \sigma (\eta),
     \;\;\; \bm{f} \in \bm{L}^p(\S), \;\; 1 \leq p \leq \infty.
\end{equation*}
We have that 
\begin{align*}
    \bm{P}_e \bm{f}(x) &= P\bm{f}(x)+\bm{L}\bm{f}(x) \\ 
    &=  \int_{\S} P(x, \eta)  \bm{f} (\eta) d \sigma (\eta)+\int_{\S}  \mathcal{L}(x, \eta)\bm{f} (\eta) d \sigma (\eta).    
   \end{align*}

   
   For $\bm{\mu}\in \bm{M}(\S), $  we define likewise 
  \begin{equation*}
 \bm{P}_e\bm{\mu}(x)= \int_\S \mathcal{P}_e(x,\eta)d \bm{\mu}(\eta)=\int_\S \mathcal{P}_e(x,\eta)(d \mu_1(\eta),d \mu_2(\eta),d \mu_3(\eta))^t,
  \end{equation*}
  where $\bm{\mu}=(\mu_1,\mu_2,\mu_3).$


   \begin{proposition}\label{estimaciones} Let $\bm{f} \in \bm{L}^1(\S)$. We have
   \begin{itemize}
   \item[a)] The matrix function $ \partial^\alpha_x\mathcal{P}_e(x,\eta)$ is uniformly bounded in $K\times\S$ for any compact
   $K\subset \B$ and every partial derivative $\partial^\alpha_x.$
   \item[b)]$\vert \bm{L} \bm{f}(x)\vert \lesssim  (1-\vert x\vert^2)\int_0^1\frac{ P  |\bm{f}|(tx)}{(1-t^2\vert x\vert^2)^2}t^{1-\alpha}dt$.

   \item[c)] 
   $\max (\bm{P}_e \bm{f})(\xi)\lesssim \max  (P\vert \bm{f}\vert )(\xi)\lesssim {M} [|\bm{f}|](\xi).$
   \end{itemize}

   \end{proposition}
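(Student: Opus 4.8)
The plan is to establish the three parts in order, since part (c) will follow by combining (a), (b), and the estimate \eqref{ntmaximal vs maximal} from the harmonic theory. For part (a), I would note that $\mathcal{P}_e(x,\eta)=P(x,\eta)\mathcal{I}+\mathcal{L}(x,\eta)$, and that on any compact $K\subset\B$ one has $|x-\eta|\geq \operatorname{dist}(K,\S)>0$ uniformly for $\eta\in\S$. Hence $P(x,\eta)$ and all its $x$-derivatives are smooth and bounded on $K\times\S$ by differentiating the explicit formula \eqref{ja_poisson_2}. For the perturbation term $\mathcal{L}_{ij}$ given by \eqref{ja_poisson_3}, the factor $(1-|x|^2)$ is smooth and bounded on $K$, so it suffices to control $\partial^\alpha_x \frac{\partial^2\Phi}{\partial x_i\partial x_j}$; here I would differentiate under the integral sign in \eqref{ja_poisson_4} and invoke the appendix bounds \eqref{acotacion_poisson_e_3} and \eqref{acotacion_poisson_e_2} (already cited in the text as guaranteeing that the second derivatives are well defined) to justify the interchange and obtain uniform bounds on $K\times\S$, using that $1-2tx\cdot\eta+t^2|x|^2\geq (1-t|x|)^2$ stays bounded below for $x\in K$.

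For part (b), the goal is the pointwise estimate
\begin{equation*}
|\bm{L}\bm{f}(x)|\lesssim (1-|x|^2)\int_0^1 \frac{P|\bm{f}|(tx)}{(1-t^2|x|^2)^2}\,t^{1-\alpha}\,dt.
\end{equation*}
Starting from $\bm{L}\bm{f}(x)=\int_\S \mathcal{L}(x,\eta)\bm{f}(\eta)\,d\sigma(\eta)$ and the definition \eqref{ja_poisson_3}, I would bound $|\mathcal{L}_{ij}(x,\eta)|$ by $|\beta|(1-|x|^2)\big|\frac{\partial^2\Phi}{\partial x_i\partial x_j}(x,\eta)\big|$ and then differentiate the integrand of \eqref{ja_poisson_4} twice in $x$. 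The key computation is that the second $x$-derivatives of the kernel $\frac{1-t^2|x|^2}{(1-2tx\cdot\eta+t^2|x|^2)^{3/2}}=4\pi P(tx,\eta)$ produce, after accounting for the worst singular factor, a quantity controlled by $\frac{P(tx,\eta)}{(1-t^2|x|^2)}$ times an integrable weight; the subtracted polynomial terms $-1-3tx\cdot\eta$ have bounded derivatives and contribute nothing singular. Collecting the $t^{-1-\alpha}$ from the measure against the $t^2$ from two derivatives yields the weight $t^{1-\alpha}$, and integrating $|\bm{f}(\eta)|$ against $P(tx,\eta)$ over $\S$ reconstructs $P|\bm{f}|(tx)$, giving the claimed bound. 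I expect this derivative bookkeeping — tracking exactly which power of $(1-t^2|x|^2)$ survives — to be the main obstacle, and I would lean on the appendix estimates to legitimize the pointwise comparison.

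For part (c), I would first bound the nontangential maximal function of the full elastic transform by that of the harmonic transform of $|\bm{f}|$. Applying $\mathcal{N}$ to $\bm{P}_e\bm{f}=P\bm{f}+\bm{L}\bm{f}$ and using part (b) together with \eqref{integral}, the perturbation term satisfies, for $x=t\xi\in\Gamma_\xi$,
\begin{equation*}
|\bm{L}\bm{f}(x)|\lesssim (1-|x|^2)\int_0^1 \frac{\sup_s P|\bm{f}|(sx)}{(1-t^2|x|^2)^2}\,t^{1-\alpha}\,dt\lesssim \mathcal{N}(P|\bm{f}|)(\xi),
\end{equation*}
where the supremum of $P|\bm{f}|$ along the relevant ray is absorbed into $\mathcal{N}(P|\bm{f}|)(\xi)$ and the remaining integral is uniformly bounded by \eqref{integral}. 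Since the harmonic term contributes $\mathcal{N}(P\bm{f})(\xi)\leq \mathcal{N}(P|\bm{f}|)(\xi)$ directly, we obtain $\mathcal{N}(\bm{P}_e\bm{f})(\xi)\lesssim \mathcal{N}(P|\bm{f}|)(\xi)$. Finally, the second inequality $\mathcal{N}(P|\bm{f}|)(\xi)\lesssim M[|\bm{f}|](\xi)$ is exactly \eqref{ntmaximal vs maximal} applied to the scalar function $|\bm{f}|\in L^1(\S)$, which completes the proof.
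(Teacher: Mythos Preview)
Your plan is correct and follows essentially the same route as the paper's appendix proof: differentiate $P(tx,\eta)$ explicitly to bound $\partial_i\partial_j\Phi$ by $\int_0^1 |tx-\eta|^{-4}\,t^{1-\alpha}\,dt$, convert this via $|tx-\eta|\geq 1-t|x|$ into the stated integral of $P|\bm{f}|(tx)$, and then for (c) pull $\mathcal{N}(P|\bm{f}|)(\xi)$ out of the $t$-integral using that $tx\in\Gamma_\xi$ whenever $x\in\Gamma_\xi$ (convexity of $\Gamma_\xi$, which contains the origin), leaving a quantity controlled by \eqref{integral}. The only point to tighten is your notation ``$x=t\xi\in\Gamma_\xi$'' in part (c): $x$ must range over the whole cone $\Gamma_\xi$, not just the radial ray, and it is precisely the convexity observation above that makes your bound valid for all such $x$.
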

   
   \begin{proof}
   See the appendix.
   \end{proof}

   We have the standard result on Hardy spaces for the case of the  Lam\'e equation:
   \begin{theorem} 
   
   \begin{enumerate}\label{Clasico}
   \item  Let $1 < p \leq \infty$. $\bm{u} \in  \bm{h}_e^p(\B)$ if and only if $\bm{u}(x) = \bm{P}_e \bm{f}(x)$ for some $ \bm{f} \in \bm{L}^p (\S)
 $. 
   \item $\bm{u}\in \bm{h}_e^1 (\B)$ if and only if $\bm{u}(x)=\bm{P}_e \bm{\mu }(x)$ for some $\bm{\mu}\in \bm{M}(\S)$. 
   \end{enumerate}
   \end{theorem}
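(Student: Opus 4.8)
The plan is to prove each implication, obtaining the ``if'' direction from the kernel estimates already at hand and the ``only if'' direction from a weak-$*$ compactness argument combined with a reproducing property of the elastic Poisson kernel.

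\emph{Sufficiency.} First I would record that $\bm{P}_e\bm{f}$ solves \eqref{ecuacion}: each column of $\mathcal{P}_e(\cdot,\eta)$ is a solution of the Lam\'e system by Kupradze's construction, and part a) of Proposition \ref{estimaciones} gives uniform bounds on $\partial_x^\alpha\mathcal{P}_e$ over compact subsets of $\B$, so one may differentiate under the integral sign and conclude $\dest\bm{P}_e\bm{f}=0$. For the norm bound when $1<p<\infty$, I would observe that $r\xi\in\Gamma_\xi$ for every $r\in[0,1)$, whence $\Vert\bm{u}(r\cdot)\Vert_{\bm{L}^p}\le\Vert\mathcal{N}\bm{u}\Vert_{L^p}$ and thus $\Vert\bm{P}_e\bm{f}\Vert_{\bm{h}_e^p}\le\Vert\mathcal{N}(\bm{P}_e\bm{f})\Vert_{L^p}$; part c) of Proposition \ref{estimaciones} and the $L^p$-boundedness of the Hardy--Littlewood maximal function then give $\Vert\mathcal{N}(\bm{P}_e\bm{f})\Vert_{L^p}\lesssim\Vert M[|\bm{f}|]\Vert_{L^p}\lesssim\Vert\bm{f}\Vert_{\bm{L}^p}$. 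For $p=\infty$ I would bound $|\bm{P}_e\bm{f}|\le|P\bm{f}|+|\bm{L}\bm{f}|$ pointwise, using $|P\bm{f}(x)|\le\Vert\bm{f}\Vert_\infty$ and part b) together with \eqref{integral} to get $|\bm{L}\bm{f}(x)|\lesssim\Vert\bm{f}\Vert_\infty$. The same computation applied to the total variation $|\bm{\mu}|$, combined with Fubini and $\int_\S P(r\xi,\eta)\,d\sigma(\xi)=1$, yields $\bm{P}_e\bm{\mu}\in\bm{h}_e^1(\B)$ with $\Vert\bm{P}_e\bm{\mu}\Vert_{\bm{h}_e^1}\lesssim\Vert\bm{\mu}\Vert$.

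\emph{Necessity via compactness.} Given $\bm{u}\in\bm{h}_e^p(\B)$, I would consider the dilates $\bm{u}_r(\eta)=\bm{u}(r\eta)$, which form a bounded family: $\Vert\bm{u}_r\Vert_{\bm{L}^p}\le\Vert\bm{u}\Vert_{\bm{h}_e^p}$ for $1<p\le\infty$, and $\Vert\bm{u}_r\Vert_{\bm{M}}\le\Vert\bm{u}\Vert_{\bm{h}_e^1}$ when $p=1$. Using reflexivity of $\bm{L}^p$ for $1<p<\infty$, weak-$*$ compactness of balls in $\bm{L}^\infty=(\bm{L}^1)^*$ for $p=\infty$, and weak-$*$ compactness of balls in $\bm{M}(\S)=\bm{C}(\S)^*$ for $p=1$, I would extract a sequence $r_n\uparrow1$ along which $\bm{u}_{r_n}$ converges weak-$*$ to some $\bm{f}\in\bm{L}^p(\S)$ (resp. $\bm{\mu}\in\bm{M}(\S)$), with $\Vert\bm{f}\Vert_{\bm{L}^p}\le\Vert\bm{u}\Vert_{\bm{h}_e^p}$ by lower semicontinuity of the norm.

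\emph{The reproducing step and passage to the limit.} The heart of the argument is the identity
\begin{equation*}
\bm{u}(rx)=\int_\S\mathcal{P}_e(x,\eta)\,\bm{u}(r\eta)\,d\sigma(\eta),\qquad x\in\B,\ 0<r<1.
\end{equation*}
Since $\dest$ has constant coefficients and is homogeneous of degree $2$, the dilate $\bm{v}(y):=\bm{u}(ry)$ is again a solution of \eqref{ecuacion}, now smooth up to $\overline{\B}$, and the identity expresses precisely that $\bm{P}_e$ reproduces such solutions: $\bm{v}=\bm{P}_e[\bm{v}|_\S]$ on $\B$. I would establish this by noting that $\bm{P}_e[\bm{v}|_\S]$ solves the Lam\'e system in $\B$, attains the continuous boundary data $\bm{v}|_\S$, and then invoking uniqueness for the elastic Dirichlet problem. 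Fixing $x\in\B$ and letting $r=r_n\uparrow1$, the left-hand side tends to $\bm{u}(x)$ by interior continuity of $\bm{u}$, while on the right, since $\eta\mapsto\mathcal{P}_e(x,\eta)$ is continuous on $\S$ (hence lies in both $\bm{L}^{p'}(\S)$ and $\bm{C}(\S)$), the weak-$*$ convergence of $\bm{u}_{r_n}$ forces convergence to $\bm{P}_e\bm{f}(x)$ (resp. $\bm{P}_e\bm{\mu}(x)$). Therefore $\bm{u}=\bm{P}_e\bm{f}$ (resp. $\bm{u}=\bm{P}_e\bm{\mu}$) on $\B$.

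\emph{Main obstacle.} I expect the delicate point to be the reproducing property: one must verify that the perturbation term $\bm{L}[\bm{v}|_\S]$ contributes no boundary trace, so that $\bm{P}_e[\bm{v}|_\S]$ has boundary values $\bm{v}|_\S$ (this rests on the kernel estimates of Proposition \ref{estimaciones} and the appendix), and that the elastic Dirichlet problem is uniquely solvable throughout the admissible range $\mu>0$, $2\mu+\lambda>0$ via the standard energy/Korn argument. Once these are secured, the compactness and limiting arguments are routine.
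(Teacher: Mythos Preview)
Your proof is correct and follows essentially the same route as the paper: weak-$*$ compactness of the dilates $\bm{u}_r$ together with the reproducing identity $\bm{u}(rx)=\bm{P}_e[\bm{u}_r](x)$ for necessity, and the kernel estimates of Proposition~\ref{estimaciones} for sufficiency. The only minor difference is that for sufficiency in the range $1<p<\infty$ you invoke part~c) and the $L^p$-boundedness of the maximal function, whereas the paper applies part~b) directly with Minkowski's inequality; the latter has the small advantage of handling all $1\le p<\infty$ uniformly, but both are standard and valid.
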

   \begin{proof}
   \textit{1.} Suppose that $\bm{u}\in \bm{h}_e^p(\B)$, with $p>1$. We proceed in the standard way: let $\bm{u}_r(\eta) =\bm{u} (r\eta)$, $\eta \in \mathbb{S}$ and $r \in (0,1)$. 
   Notice that since $\Delta_x^* \bm{u}  (rx)=0$ on $\overline{\B}$, then $\bm{u}  (rx)=\int_\S \mathcal{P}_e(x,\eta) \bm{u} _r(\eta)d\sigma (\eta)$.
   
    We have that $\mathcal{A}=\lbrace \bm{u}_r\rbrace_{r \in (0,1)}$ is a bounded set in $\bm{L}^p(\S)=\bm{L}^q(\S)^*,$ where $q$ is the conjugate exponent of $p$ with the duality
   $$\langle \bm{f},\bm{g}\rangle=\int_\S \bm{f}(\theta)\cdot \bm{g}(\theta)  d\sigma (\theta),
\;\;\; \bm{f}\in \bm{L}^p(\S), \; \bm{g}\in \bm{L}^q(\S).$$ 
   Then $\mathcal{A}$ is a $w^*$ relatively compact by the Banach-Alouglou theorem.  Hence there exist a sequence $r_n\nearrow 1$ and $\bm{f}\in \bm{L}^p(\S)$ such that $\mathbf{u}_{r_n}\rightarrow \bm{f}$ in the weak$^*$ topology. 
Since the rows of $\mathcal{P}_e(x,\cdot)$ are bounded functions (see Proposition \ref{estimaciones}) in $\S$ and hence they are in $ \bm{L}^q(\S)$,  it follows  that 
\begin{equation*}
\bm{u}(x)=\lim_{n\rightarrow\infty}\bm{u}(r_nx)=\lim_{n\rightarrow\infty}\int_\S \mathcal{P}_e(x,\eta)\bm{u}_{r_n}(\eta)d\sigma (\eta)=\int_\S \mathcal{P}_e(x,\eta)\bm{f}(\eta)d\sigma (\eta).
\end{equation*}
   
  Conversely, let $\bm{f}\in \bm{L}^p(\S)$. Then the standard properties of the Poisson kernel imply that 
  \begin{equation*}
  \Vert (P \bm{f})_r\Vert_{\bm{L}^p}\lesssim \Vert \bm{f}\Vert_{\bm{L}^p}.
  \end{equation*}
  Also, by Proposition \ref{estimaciones} b), if $\eta\in \S$,
  \begin{equation*}
 \vert (\bm{L} \bm{f})_r(\eta)\vert  \lesssim  (1-r^2)\int_0^1\frac{ P|\bm{f}|(tr\eta)}{(1-t^2r^2)^2}t^{1-\alpha}dt.
  \end{equation*}
so that using   Minkowskii's inequalty, 
\begin{equation*}
\Vert(\bm{L}\bm{f})_r\Vert_{\bm{L}^p}\lesssim(1-r^2)\int_0^1\frac{\Vert(P|\bm{f}|)_{rt}\Vert_{\bm{L}^p(\S)}}{(1-t^2r^2)^2}t^{1-\alpha}dt
\end{equation*}
\begin{equation*}
 \leq \Vert \bm{f}\Vert_{\bm{L}^p}\int_0^1\frac{(1-r^2)}{(1-t^2r^2)^2}t^{1-\alpha}dt\lesssim\Vert \bm{f}\Vert_{\bm{L}^p},
\end{equation*}
and this holds for $1\leq p<\infty$. This completes the proof of \textit{1.}  

Now we prove the representation of elements in $\bm{h}_e^1(\B)$. This is done  as in the previous case  using  the duality $\bm{C}(\S)^*=\bm{M}(\S)$, where $\bm{C}(\S)$ is the Banach space of continuous functions in $\S$ with values in $\R^3$. Finally, we will prove that $\bm{P}_e\bm{\mu}\in \bm{h}_e^1(\B)$ for $\bm{\mu}\in\bm{M}(\S).$ 
First recall that if $\mu$ is a Borel signed measure in $\S$ then
\begin{equation}\label{acota integral}
\left| \int_\S \varphi d\mu\right|\leq \int_\S \vert \varphi\vert d\vert \mu\vert,
\end{equation}
for every $\varphi\in C(\S)$. 
This implies that for any $0\leq r< 1$

\begin{equation*}
    \vert (P\bm{\mu})_r\vert \leq (P\vert \bm{\mu}\vert)_r.
\end{equation*}
Hence 
\begin{equation}\label{controla P}
    \Vert (P\bm{\mu})_r\Vert_{{\bm{L}^1}} \leq \Vert (P\vert \bm{\mu}\vert)_r\Vert_{L^1}\leq \vert \bm{\mu}\vert(\S),
\end{equation} 
by the theory of harmonic Hardy spaces. 
Similarly by \eqref{cota L} and \eqref{acota integral} we have
\begin{equation}\label{controla L}
\Vert(\bm{L}\bm{\mu})_r\Vert_{\bm{L}^1}\lesssim(1-r^2)\int_0^1\frac{\Vert(P\bm{\mu})_{rt}\Vert_{\bm{L}^1}}{(1-t^2r^2)^2}t^{1-\alpha}dt\leq \vert \bm{\mu}\vert(\S).
\end{equation}

Then,  by \eqref{controla P} and \eqref{controla L} we have that $\bm{P}_e\bm{\mu}\in \bm{h}^1_e(\B).$

   \end{proof}
  \begin{remark}
  By the open mapping theorem  $\bm{L}^p(\S)\cong \bm{h}_e^p(\B)$ for $1 < p \leq \infty$ and 
   $\bm{M}(\S)\cong \bm{h}_e^1(\B)$ via the elastic Poisson transformation $\bm{P}_e$.
  \end{remark}

    \begin{theorem}\label{clasico-maximal}
  $\bm{h}_e^p(\B)=\bm{H}_e^p(\B)$ for $1<p<\infty$.
  \end{theorem}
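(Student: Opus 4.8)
The inclusion $\bm{H}_e^p(\B)\subset \bm{h}_e^p(\B)$ was already observed in Section 2, so the plan is to establish the reverse inclusion for $1<p<\infty$. The strategy is to exploit the Poisson representation of Hardy-space elements together with the pointwise control of the nontangential maximal function by the Hardy--Littlewood maximal operator, both of which are available from earlier results.

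First I would take $\bm{u}\in \bm{h}_e^p(\B)$ and invoke Theorem \ref{Clasico} to write $\bm{u}=\bm{P}_e\bm{f}$ for some $\bm{f}\in \bm{L}^p(\S)$. Next, Proposition \ref{estimaciones} c) supplies the pointwise bound
\begin{equation*}
\mathcal{N}\bm{u}(\xi)=\mathcal{N}(\bm{P}_e\bm{f})(\xi)\lesssim {M}[|\bm{f}|](\xi),\qquad \xi\in\S.
\end{equation*}
Finally, since $1<p<\infty$, the Hardy--Littlewood maximal operator ${M}$ is bounded on $L^p(\S)$; as $|\bm{f}|\in L^p(\S)$, it follows that ${M}[|\bm{f}|]\in L^p(\S)$ and therefore $\mathcal{N}\bm{u}\in L^p(\S)$. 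This is exactly the statement that $\bm{u}\in \bm{H}_e^p(\B)$, which together with the trivial inclusion yields the claimed equality.

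The argument is essentially an assembly of earlier results, so no single hard step remains at this point; the genuine content has been absorbed into Proposition \ref{estimaciones} c), whose proof (deferred to the appendix) establishes the chain $\mathcal{N}(\bm{P}_e\bm{f})\lesssim \mathcal{N}(P|\bm{f}|)\lesssim {M}[|\bm{f}|]$. The first inequality is where the elastic perturbation $\bm{L}$ of the harmonic Poisson kernel must be tamed, using the bound in Proposition \ref{estimaciones} b) and the elementary integral estimate \eqref{integral}; the second is the classical comparison \eqref{ntmaximal vs maximal} between the nontangential maximal function of a harmonic Poisson integral and the Hardy--Littlewood maximal function. The restriction $1<p<\infty$ enters \emph{only} through the $L^p$-boundedness of ${M}$, which degenerates to weak type $(1,1)$ at $p=1$; this is precisely why the endpoint $p=1$ must be excluded from the statement.
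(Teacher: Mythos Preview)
Your proof is correct and follows exactly the same route as the paper: represent $\bm{u}\in\bm{h}_e^p(\B)$ as $\bm{P}_e\bm{f}$ via Theorem~\ref{Clasico}, apply Proposition~\ref{estimaciones}~c) to dominate $\mathcal{N}\bm{u}$ by ${M}[|\bm{f}|]$, and conclude by the $L^p$-boundedness of ${M}$. Your write-up is in fact more explicit than the paper's, and your remarks on where the content lies and why $p=1$ is excluded are accurate.
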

  \begin{proof}
  Let $\bm{u}\in \bm{h}_e^p(\B)$, and $\bm{f} \in \bm{L}^p(\S)$ such that $\bm{u}(x)=\bm{P}_e \bm{f}(x)$.
  Then by proposition \ref{estimaciones},
  \begin{equation*}
  \vert \bm{u}(x)\vert\leq C\mathcal{M}(|\bm{f}|)(\eta)  
  \end{equation*}
  for $x\in\Gamma_\eta$.   Then since ${M}[|\bm{f}|](\eta)\in \bm{L}^p(\S)$, we have that 
  \begin{equation*}
  \max \bm{u}\in L^p(\S).
  \end{equation*}
  
   \end{proof}
   
   
  We say that $\bm{u}:\B\rightarrow\R^3$  has non-tangential  limits almost everywhere if the limit 
  
  $$\lim_{x \in \Gamma_\xi, \; x \rightarrow \xi} \bm{u}(x).$$  exists for almost all $\xi\in\S$.

  We have the Fatou theorem for $\bm{h}_e^p(\B)$. (See \cite{MMMM} for very general results on Fatou theorems for elliptic systems).  
  \begin{theorem}\label{Fatou}
  For any $\bm{f}\in \bm{L}^1 (\S)$, $\bm{u}=\bm{P}_e \bm{f}$ converges non-tangentially to $\bm{f}$  almost everywhere. In particular this holds for every $\bm{u}\in \bm{h}_e^p(\B)$, $1 <  p \leq \infty$. Moreover, every  $\bm{u}\in \bm{h}_e^1(\B)$ has non-tangential limits almost everywhere.
  \end{theorem}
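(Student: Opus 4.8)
The plan is to run the classical Fatou scheme: a non-tangential maximal estimate, combined with convergence on a dense subclass and the weak-type $(1,1)$ inequality for the Hardy--Littlewood maximal function $M$. First I would reduce the ``in particular'' statements to the basic claim for $\bm{f}\in\bm{L}^1(\S)$: by Theorem \ref{Clasico} every $\bm{u}\in\bm{h}_e^p(\B)$ with $1<p\leq\infty$ equals $\bm{P}_e\bm{f}$ for some $\bm{f}\in\bm{L}^p(\S)\subset\bm{L}^1(\S)$ (since $\sigma(\S)<\infty$), so the first assertion covers these, while the case $\bm{u}=\bm{P}_e\bm{\mu}$ with $\bm{\mu}\in\bm{M}(\S)$ is postponed. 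The quantitative engine of the whole argument is Proposition \ref{estimaciones}(c), namely $\mathcal{N}(\bm{P}_e\bm{h})(\xi)\lesssim \mathcal{N}(P|\bm{h}|)(\xi)\lesssim M[|\bm{h}|](\xi)$, which is exactly what lets the harmonic theory absorb the perturbation $\bm{L}$.

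Next I would establish non-tangential (indeed unrestricted) convergence on the dense subspace $\bm{C}(\S)\subset\bm{L}^1(\S)$. Writing $\bm{P}_e\bm{g}=P\bm{g}+\bm{L}\bm{g}$, the harmonic part $P\bm{g}$ converges to $\bm{g}(\xi)$ at every $\xi\in\S$ by the classical theory applied componentwise, so it suffices to show $\bm{L}\bm{g}(x)\to 0$ as $x\to\xi$ with $x\in\Gamma_\xi$. Here the crucial observation is the cancellation $\int_\S\mathcal{L}(x,\eta)\,d\sigma(\eta)=0$ for every $x\in\B$: applying the reproducing identity of Theorem \ref{Clasico} to the constant (hence Lam\'e-harmonic) solutions gives $\int_\S\mathcal{P}_e(x,\eta)\,d\sigma(\eta)=\mathcal{I}$, and subtracting $\int_\S P(x,\eta)\mathcal{I}\,d\sigma(\eta)=\mathcal{I}$ yields the claim. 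With it, \[ \bm{L}\bm{g}(x)=\int_\S\mathcal{L}(x,\eta)\bigl(\bm{g}(\eta)-\bm{g}(\xi)\bigr)\,d\sigma(\eta), \] and I would run the usual approximate-identity split into $|\eta-\xi|<\delta$ and $|\eta-\xi|\geq\delta$: on the first region the continuity of $\bm{g}$ together with the uniform bound $\sup_x\int_\S|\mathcal{L}(x,\eta)|\,d\sigma(\eta)\lesssim 1$ (which follows from the $\bm{L}^\infty$-boundedness of $\bm{L}$ provided by Proposition \ref{estimaciones}(b) with $|\bm{f}|\equiv 1$ and \eqref{integral}) makes the contribution small, while on the second region the factor $(1-|x|^2)$ in $\mathcal{L}$ together with the boundedness of $\partial^2_x\Phi$ away from the diagonal forces the contribution to $0$; the restriction $x\in\Gamma_\xi$ keeps $x$ uniformly away from $\{\eta:|\eta-\xi|\geq\delta\}$.

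With these two ingredients the $\bm{L}^1$ case closes by the standard maximal argument: given $\bm{f}\in\bm{L}^1(\S)$ and $\varepsilon>0$, pick $\bm{g}\in\bm{C}(\S)$ with $\|\bm{f}-\bm{g}\|_{\bm{L}^1}<\varepsilon$ and set $\bm{h}=\bm{f}-\bm{g}$, so that \[ \limsup_{x\to\xi,\,x\in\Gamma_\xi}\bigl|\bm{P}_e\bm{f}(x)-\bm{f}(\xi)\bigr|\leq \mathcal{N}(\bm{P}_e\bm{h})(\xi)+|\bm{h}(\xi)|\lesssim M[|\bm{h}|](\xi)+|\bm{h}(\xi)|, \] and the weak-type $(1,1)$ bound for $M$ together with Chebyshev show every super-level set of the left-hand side has measure $\lesssim\varepsilon/\lambda$; letting $\varepsilon\to0$ gives a.e. convergence to $\bm{f}$. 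For $\bm{u}\in\bm{h}_e^1(\B)$ I would write $\bm{u}=\bm{P}_e\bm{\mu}$ and Lebesgue-decompose $\bm{\mu}=\bm{f}\,d\sigma+\bm{\mu}_s$: the absolutely continuous part is handled by the case just proved, and for the singular part I combine the classical Fatou theorem for harmonic Poisson integrals of measures (giving $P\bm{\mu}_s\to0$ a.e.) with the same cancellation/approximate-identity treatment of $\bm{L}\bm{\mu}_s$ at the a.e.\ points where the symmetric derivative of $|\bm{\mu}_s|$ vanishes, the passage to the limit being controlled by the measure version of Proposition \ref{estimaciones}(c), $\mathcal{N}(\bm{P}_e\bm{\mu}_s)\lesssim M[|\bm{\mu}_s|]$.

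The main obstacle is the convergence on continuous data, and precisely the vanishing of the perturbation $\bm{L}\bm{g}$ at the boundary. The size estimate of Proposition \ref{estimaciones}(b) is by itself too lossy for this: for continuous $\bm{g}$ its right-hand side only stays \emph{bounded} (it tends to a nonzero multiple of $|\bm{g}(\xi)|$), so the proof must genuinely exploit the cancellation $\int_\S\mathcal{L}(x,\eta)\,d\sigma(\eta)=0$ rather than the pointwise bound. The technical heart of this step is therefore upgrading the appendix bounds on $\partial^2_x\Phi$ to off-diagonal estimates that are uniform up to the boundary $x\to\xi\in\S$ (away from $\eta=\xi$), which is what makes the approximate-identity splitting legitimate.
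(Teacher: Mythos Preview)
Your proposal is correct, and the overall Fatou scheme (maximal estimate plus convergence on a dense class) matches the paper. The genuine difference is how you handle continuous boundary data. The paper dispatches this in one line: by elliptic regularity for the Lam\'e system, $\bm{P}_e\bm{g}\in\bm{C}(\overline{\B})$ whenever $\bm{g}\in\bm{C}(\S)$, so the boundary limit is immediate. You instead prove the cancellation $\int_\S\mathcal{L}(x,\eta)\,d\sigma(\eta)=0$ and run an explicit approximate-identity splitting for $\bm{L}\bm{g}$. Your route is self-contained (no black-box regularity), and the off-diagonal control you flag as ``the main obstacle'' is in fact available from the appendix estimate \eqref{cotaD2Fi}: for $|\eta-\xi|\geq\delta$ and $x\in\Gamma_\xi$ close to $\xi$, one checks $|tx-\eta|\gtrsim\delta$ uniformly in $t\in[0,1]$, so the far piece carries the factor $(1-|x|^2)$ and vanishes. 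The paper's route is shorter; yours makes the mechanism visible.

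For the singular measure part, the paper's argument is cleaner than your sketch and avoids the need to revisit the $\eta$-splitting. It uses Proposition~\ref{estimaciones}(b) directly, $|\bm{L}\bm{\mu}_s(x)|\lesssim(1-|x|^2)\int_0^1 P|\bm{\mu}_s|(tx)\,(1-t^2|x|^2)^{-2}t^{1-\alpha}\,dt$, and then splits the \emph{$t$-integral} at $1-\delta/2$: on $[1-\delta/2,1]$ one has $tx\in\Gamma_\xi$ with $|tx-\xi|<\delta$, so the classical fact $P|\bm{\mu}_s|\to 0$ non-tangentially a.e.\ makes that piece $\lesssim\varepsilon$ via \eqref{integral}; on $[0,1-\delta/2]$ the integrand is bounded and the prefactor $(1-|x|^2)$ kills it. This reduces everything to the harmonic Fatou theorem for $P|\bm{\mu}_s|$, with no separate analysis of $\mathcal{L}(x,\eta)$ near the diagonal and no appeal to the symmetric derivative of $|\bm{\mu}_s|$. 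Your version would work too, but it duplicates effort already packaged in Proposition~\ref{estimaciones}(b).
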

  
  \begin{proof} 
  First notice that if $\bm{f}\in \bm{C}(\S)$, then by elliptic regularity we have that $\bm{P}_e(\bm{f})\in \bm{C}(\overline{\B})$, 
We start with  $\bm{f}=(f_1,f_2,f_3)\in \bm{C}^\infty(\S)$.
 On the other hand, by Proposition \ref{estimaciones}c),  $\max \bm{P}_e\bm{f}$ is weak $(1,1)$.  
This together with the fact that $\bm{C}(\S)$ is dense in $\bm{L}^1 (\S)$,  implies that $\bm{P}_e\bm{f}$ has non-tangential limits  almost everywhere for  $\bm{f}\in \bm{L}^1 (\S)$. 
 

  Now,  to handle the case $\bm{u}\in \bm{h}_e^1(\B)$,  consider first  $\bm{\mu}\in \bm{M}(\S)$,     singular  with respect to the Lebesgue measure $\sigma$.  Then $\vert\bm{\mu}\vert$ is also singular so that (see \cite[Th. 6.42]{Axler}) $$\lim_{x \in \Gamma_\xi, \; x \rightarrow \xi} P\vert\bm{\mu}\vert(x)=0,$$   
  for $\xi$ in a measurable set $A$ with $\sigma(\S-A)=0$.  We claim that the same holds  for $\bm{P}_e\bm{\mu}$.
  
  Since \begin{equation*}
  \vert \bm{P}_e\bm{\mu}(x)\vert\leq \vert P\bm{\mu}(x)\vert+\vert \bm{L}\bm{\mu}(x)\vert,
  \end{equation*}
  it remains to prove that 
  \begin{equation}\label{L a cero}
      \lim_{x \in \Gamma_\xi, \; x \rightarrow \xi} \bm{L}\bm{\mu}(x)=0.
  \end{equation}

In fact, let $\epsilon>0$,  $\xi\in A$ and $0<\delta<1$ such that $P\vert \bm{\mu}\vert (y)<\epsilon$ for $y\in \Gamma_\xi$ with $\vert \xi-y\vert<\delta$.
We write 
\begin{equation*}
\vert \bm{L}\bm{\mu}(x)\vert\lesssim (1-\vert x\vert^2)\left(\int_0^{\delta/2}+\int_{\delta/2}^1\right)
\frac{P\vert\bm{\mu}\vert(tx)}{(1-\vert tx\vert^2)^2}t^{1-\alpha}dt.
\end{equation*}
 Notice that if $x\in \Gamma_\xi$ with  $\vert x-\xi\vert<\delta/2$ then $tx\in\Gamma_\xi$ and $\vert tx-\xi\vert<\delta$ for  $0<1-t<\delta/2$. 
  This implies by \eqref{integral} that 
  \begin{equation*}
  (1-\vert x\vert^2)\int_{\delta/2}^1 \frac{P\vert\bm{\mu}\vert(tx)}{(1-\vert tx\vert^2)^2}t^{1-\alpha}dt\leq C\epsilon.
  \end{equation*}
 On the other side, since $P\vert\bm{\mu}  \vert(tx) $ is bounded for $t\in[0,\delta]$ and $x\in \B$, it is clear that 
  \begin{equation*}
  (1-\vert x\vert^2)\int_0^{\delta/2} \frac{P\vert\bm{\mu}\vert(tx)}{(1-\vert tx\vert^2)^2}t^{1-\alpha}dt\rightarrow 0 
  \end{equation*}
  as $x\rightarrow \xi $.  Hence \eqref{L a cero} holds.

  Now if $\bm{\mu}\in \bm{M}(\S)$ then by the Radon Nikodym theorem 	$ d\bm{\mu}=fd\sigma+d\bm{\mu}_s$, $f\in L^1(S)$ and $\bm{\mu}_s$ singular,  so that the Theorem follows from the previous analysis.

  \end{proof}
  \begin{remark}
   Notice that if $\bm{u}\in \bm{H}^1_e(\B),$ then since $\mathcal{N} \bm{u}\in\bm{L}^1 (\S)$, its boundary  limit $\bm{f}$ according to Theorem \ref{Fatou} belongs to $\bm{L}^1 (\S)$ by the dominated convergence theorem and $\bm{u}_r\rightarrow \bm{f}$ as $r\rightarrow 1-$ in   $\bm{L}^1 (\S)$, hence  $\bm{u}=\bm{P}_e \bm{f}$.
   \end{remark}

   Recall that $\bm{u}$ is a Riesz field such that  $div\, \bm{u}=0$ and $\nabla\times  \,\bm{u}=0$. This is equivalent to $\bm{u}=\nabla h$, where $h$ is a harmonic function, and also to the condition $\nabla \bm{u}$ symmetric with null trace.

\vspace{.5 cm}
{\it Riesz fields and solutions of $\Delta^*u=0$ independent of $\mu$ and $\lambda$.} There are fields like  $\bm{u}(x)=x$, that are solutions of the Lamé equation for any value of $\lambda$ and $\mu$.   In general, any harmonic field $\bm{u}:\B\rightarrow \R^3$ such that $\nabla div\, \bm{u}=0$ is a solution to the Lam\'e equation for any $\mu$ and $\lambda$. An example of this are the Riesz fields.

   If $\bm{u}$ is a Riesz field then $\Delta \bm{u}=0$ and $\nabla div \, \bm{u}=\nabla\times ( \nabla\times \bm{u})=0$, hence $\Delta^* \bm{u}=0$    
 for any eligible $\lambda, \mu$.
An easy calculation shows that if $\bm{u}$ is a Riesz field then $\bm{v}(x)=x\times \bm{u}(x)$ is harmonic and $div\, \bm{v}=0.$ So that it is a solution of the Lam\'e equation for any $\lambda$ and $\mu$.
     
 Notice  that if $\bm{u}$  is a solution of $\Delta^*\bm{u}=0$ for all $\mu$ and $\lambda$ and has boundary value $\bm{f}$, then 
 \begin{equation}\label{P_e=P}
     \bm{u}=\bm{P}_e\bm{f} =P\bm{f}.
 \end{equation}

  \section{Vector spherical harmonics and vector-valued $\bm{L}^p$ spaces} 

 For $\ell \geq 0$ we define the real vector functions on $\S$ 
\begin{align}
 \bm{E}_{\ell,m}^{+}(\eta) &=(\ell+1) \bm{Y}_{\ell,m}^\vee (\eta)-\gradesf Y_{\ell,m}(\eta) ,&  |m| \leq \ell,  \nonumber \\
 \bm{E}_{\ell,m}^{-}(\eta ) & =\ell \bm{Y}_{\ell,m}^\vee (\eta)+\gradesf Y_{\ell,m} (\eta), &  |m| \leq \ell,  \nonumber \\
 \bm{E}_{\ell,m}^{0}(\eta) &=\eta\times \gradesf Y_{\ell,m}(\eta), & |m| \leq \ell, \label{E_ell_m_+}
     \end{align}
and the following vector spaces  of vector fields on $\mathbb{S}$
  \begin{align}
  \bm{E}^+_\ell (\S) &=\bigvee\left\lbrace \bm{E}_{\ell,m}^{+}, \; |m|\leq \ell     \right\rbrace
  , \nonumber\\
   \bm{E}^-_\ell (\S) & =\bigvee\left\lbrace \bm{E}_{\ell,m}^{-}, \;  \; |m| \leq \ell    \right\rbrace
  , \nonumber \\
   \bm{E}^0_\ell (\S)& =\bigvee\left\lbrace \bm{E}_{\ell,m}^{0},\;   \; |m| \leq \ell     \right\rbrace .  \label{E_ell^+}  
  \end{align}
  We note that $\bm{E}^-_0 (\S)=\bm{E}^0_0 (\S)=\{0\}$.
  
  It can be checked that for $m, m' \in \{-\ell, \cdots , \ell\}$, (see \cite{Freeden}),
  \begin{align}
      \int_\S \bm{E}_{\ell,m}^{+}(\eta) \cdot \bm{E}_{\ell,m'}^{+}(\eta) d \sigma (\eta)& =(\ell +1) (2 \ell +1)\delta_{m m'} , \nonumber \\
      \int_\S \bm{E}_{\ell,m}^{-}(\eta) \cdot \bm{E}_{\ell,m'}^{-}(\eta) d \sigma (\eta)& =\ell  (2 \ell +1)\delta_{ m m'} , \;\; \ell \neq 0, \nonumber \\
      \int_\S \bm{E}_{\ell,m}^{0}(\eta) \cdot \bm{E}_{\ell,m'}^{0}(\eta) d \sigma (\eta)& =\ell  ( \ell +1)\delta_{ m m'} , \;\; \ell \neq 0, \label{ortogonalidad_juan}
  \end{align}
  
  If $\mu_\ell^+= \sqrt{(\ell +1) (2 \ell +1)}$, $\mu_\ell^-= \sqrt{\ell  (2 \ell +1)}$ and $\mu_\ell^0= \sqrt{\ell  ( \ell +1)}$, then
  $\left\{\frac{1}{\mu_\ell^+} \bm{E}_{\ell,m}^{+} \right\}_{|m| \leq \ell}$, $\left\{\frac{1}{\mu_\ell^-} \bm{E}_{\ell,m}^{-} \right\}_{ |m| \leq \ell}$ and $\left\{\frac{1}{\mu_\ell^0} \bm{E}_{\ell,m}^{0} \right\}_{ |m| \leq \ell}$ are an orthonormal basis of $\bm{E}^+_\ell (\S)$, $\bm{E}^-_\ell (\S)$  and $\bm{E}^0_\ell (\S)$ respectively.
\begin{theorem}\label{descom orto}
 The spaces $\bm{E}^+_\ell(\S)$, $\bm{E}^-_\ell(\S)$  and $\bm{E}^0_\ell (\S)$ are mutually orthogonal and their elements  are the restriction of vector harmonic homogeneous polynomials of degree $\ell+1$, $\ell-1$ and  $\ell$ respectively, (in the latter two cases $\ell \geq 1$ ).

We have the orthogonal decomposition 
 \begin{equation}\label{L2Decom}
 \bm{L}^2(\mathbb{S}) =\bm{L}^2_+(\mathbb{S})\oplus \bm{L}^2_-(\mathbb{S})\oplus \bm{L}^2_0(\mathbb{S}),
\end{equation}
where
\begin{align}\label{gradientes}
 \bm{L}^2_+(\mathbb{S}) &:=\bigoplus_{\ell \geq 0}\bm{E}_{\ell }^+(\S),   \\
 \bm{L}^2_-(\mathbb{S}) & :=\bigoplus_{\ell \geq 1}\bm{E}_{\ell }^- (\S), \\
 \bm{L}^2_0(\mathbb{S}) & :=\bigoplus_{\ell \geq 1 } \bm{E}_{\ell }^0 (\S).
\end{align}
\end{theorem}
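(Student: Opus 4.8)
The plan is to treat the three assertions—the polynomial representation, the mutual orthogonality, and the completeness of the decomposition—separately, anchoring everything to the gradient formula \eqref{grad arm esf} and to the pointwise splitting of each field into a radial part $Y_{\ell,m}\eta$ and two tangential parts $\nabla_\sigma Y_{\ell,m}$ and $\eta\times\nabla_\sigma Y_{\ell,m}$.

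First I would establish the harmonic polynomial representation. For $\bm{E}^-_{\ell,m}$ this is immediate from \eqref{grad arm esf}: it is the restriction to $\S$ of $\nabla(r^\ell Y_{\ell,m})$, and the gradient of a homogeneous harmonic polynomial of degree $\ell$ is a vector homogeneous harmonic polynomial of degree $\ell-1$. For $\bm{E}^0_{\ell,m}$ I would check that $x\times\nabla(r^\ell Y_{\ell,m})$ restricts on $\S$ to $\eta\times\nabla_\sigma Y_{\ell,m}$ (the radial contribution drops out since $\eta\times\eta=0$) and that it is a homogeneous harmonic polynomial of degree $\ell$; harmonicity follows from $\Delta(x\times\nabla h)_i=\epsilon_{ijk}(x_j\Delta\partial_k h+2\partial_j\partial_k h)=0$ whenever $\Delta h=0$. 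The only non-obvious case is $\bm{E}^+_{\ell,m}$, of degree $\ell+1$: here I would take the ansatz $\bm{P}=h\,x-c\,|x|^2\nabla h$ with $h=r^\ell Y_{\ell,m}$, compute $\Delta(h x_i)=2\partial_i h$ and $\Delta(|x|^2\partial_i h)=2(2\ell+1)\partial_i h$ (using the product rule for $\Delta$, $\Delta|x|^2=6$, and Euler's relation $x\cdot\nabla h=\ell h$), and fix $c=1/(2\ell+1)$ to make $\bm{P}$ harmonic; evaluating at $r=1$ via \eqref{grad arm esf} then gives $\bm{P}|_\S=\frac{1}{2\ell+1}\bm{E}^+_{\ell,m}$.

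Next, orthogonality. Writing $R_{\ell,m}=Y_{\ell,m}\eta$, $G_{\ell,m}=\nabla_\sigma Y_{\ell,m}$ and $C_{\ell,m}=\eta\times\nabla_\sigma Y_{\ell,m}$, I would record five inner-product facts in $\bm{L}^2(\S)$: $\langle R_{\ell,m},R_{\ell',m'}\rangle=\delta_{\ell\ell'}\delta_{mm'}$ (trivial, as $|\eta|=1$); $R$ is pointwise orthogonal to both $G$ and $C$ (the surface gradient and its rotation are tangential); $\langle G_{\ell,m},G_{\ell',m'}\rangle=\ell(\ell+1)\delta_{\ell\ell'}\delta_{mm'}$ by the surface divergence theorem together with $-\Delta_\sigma Y_{\ell,m}=\ell(\ell+1)Y_{\ell,m}$; $\langle C_{\ell,m},C_{\ell',m'}\rangle=\langle G_{\ell,m},G_{\ell',m'}\rangle$ by the Lagrange identity $(\eta\times a)\cdot(\eta\times b)=a\cdot b-(\eta\cdot a)(\eta\cdot b)$; and crucially $\langle G_{\ell,m},C_{\ell',m'}\rangle=0$. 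Expanding $\bm{E}^+$, $\bm{E}^-$ and $\bm{E}^0$ in terms of $R,G,C$ then makes every cross inner product $\langle\bm{E}^+,\bm{E}^-\rangle$, $\langle\bm{E}^+,\bm{E}^0\rangle$, $\langle\bm{E}^-,\bm{E}^0\rangle$ collapse to zero \emph{for all} $\ell,\ell'$ at once, because the surviving terms already carry a factor $\delta_{\ell\ell'}$ and then cancel in $\ell$ (for instance $\langle\bm{E}^+_{\ell,m},\bm{E}^-_{\ell,m}\rangle=(\ell+1)\ell-\ell(\ell+1)=0$).

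The main obstacle is the vanishing $\langle G_{\ell,m},C_{\ell',m'}\rangle=0$ together with completeness. For the former I would integrate by parts, $\int_\S\nabla_\sigma Y_{\ell,m}\cdot(\eta\times\nabla_\sigma Y_{\ell',m'})\,d\sigma=-\int_\S Y_{\ell,m}\,\mathrm{div}_\sigma(\eta\times\nabla_\sigma Y_{\ell',m'})\,d\sigma$, and reduce to the identity $\mathrm{div}_\sigma(\eta\times\nabla_\sigma g)=0$: the rotated gradient is surface-solenoidal (in a local orthonormal tangent frame it is $(-\partial_2 g,\partial_1 g)$, with divergence $-\partial_1\partial_2 g+\partial_2\partial_1 g=0$; invariantly this is $d^2=0$ expressed through the Hodge star). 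For completeness, the three subspaces being mutually orthogonal, it suffices to show their closed span is all of $\bm{L}^2(\S)$. I would split an arbitrary field pointwise into radial and tangential parts; the radial scalar $\bm{f}\cdot\eta$ expands in $\{Y_{\ell,m}\}$, furnishing the $R_{\ell,m}$, while the Helmholtz--Hodge decomposition of tangential $\bm{L}^2$ fields on $\S$ writes $\bm{f}_T=\nabla_\sigma a+\eta\times\nabla_\sigma b$, furnishing the $G_{\ell,m}$ and $C_{\ell,m}$ after expanding $a,b$. Since at each $(\ell,m)$ the pair $\{\bm{E}^+_{\ell,m},\bm{E}^-_{\ell,m}\}$ recovers $R_{\ell,m}$ and $G_{\ell,m}$ invertibly (the coefficient system with rows $(\ell+1,-1)$ and $(\ell,1)$ has determinant $2\ell+1\neq0$) and $\bm{E}^0_{\ell,m}=C_{\ell,m}$, the closed span of the $\bm{E}$'s coincides with that of $\{R,G,C\}$, namely $\bm{L}^2(\S)$; the degenerate case $\ell=0$ (where $G=C=0$ and $\bm{E}^-_0=\bm{E}^0_0=\{0\}$) is consistent, with $\bm{E}^+_0$ supplying the radial constants and the remaining directions entering at $\ell=1$. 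The completeness of the tangential Hodge decomposition and the solenoidal identity $\mathrm{div}_\sigma(\eta\times\nabla_\sigma g)=0$ are the two places where genuine surface geometry, rather than mere bookkeeping, is required.
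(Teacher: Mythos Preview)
Your proof is correct. The harmonic-polynomial representation and the orthogonality computations match the paper's treatment almost exactly: the paper writes the degree-$(\ell+1)$ polynomial as $(2\ell+1)Y(x)x-|x|^2\nabla Y(x)$ (your ansatz with $c=1/(2\ell+1)$, rescaled), and handles $\langle\bm{E}^+,\bm{E}^-\rangle$ by the same integration by parts, dismissing orthogonality to $\bm{E}^0$ as ``clear'' where you spell out the $R,G,C$ bookkeeping and the solenoidal identity $\mathrm{div}_\sigma(\eta\times\nabla_\sigma g)=0$.

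The genuine divergence is in the completeness argument. You appeal to the tangential Helmholtz--Hodge decomposition on $\S$ to span $\bm{L}^2(\S)$ by the $R,G,C$ system. The paper instead uses a pure dimension count: from the harmonic-polynomial identification it observes
\[
\bm{E}^+_{\ell-1}(\S)\oplus\bm{E}^-_{\ell+1}(\S)\oplus\bm{E}^0_\ell(\S)\subset(\mathcal{H}^\ell)^3,
\]
and since both sides have dimension $3(2\ell+1)$ the inclusion is an equality; completeness then follows from the scalar fact $\bigoplus_\ell(\mathcal{H}^\ell)^3=\bm{L}^2(\S)$. The paper's route is more self-contained (it needs nothing beyond the scalar spherical-harmonic basis and linear algebra), whereas your route imports the Hodge decomposition as a black box but has the advantage of making the geometric content---radial/tangential splitting plus curl-free/divergence-free splitting---explicit.
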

\begin{proof} Let $\ell, \ell' \geq 0, \; \ell \neq \ell' \neq 0$, $|m| \leq \ell$ and $|m'| \leq \ell'$. 
\begin{align*}
  \langle \bm{E}_{\ell,m}^{+},\bm{E}_{\ell',m'}^{-} \rangle &= \ell' (\ell +1)\langle Y_{\ell,m},Y_{\ell',m'}\rangle- \langle  {\nabla}_\sigma Y_{\ell,m},{\nabla}_\sigma Y_{\ell',m'} \rangle \\
  & =\ell' (\ell +1)\delta_{\ell \ell'} \delta_{mm'} +\int_\S Y_{\ell ,m} (\eta) \Delta_\sigma Y_{\ell',m' } d \sigma (\eta)  \\
  & =\ell' (\ell +1)\delta_{\ell \ell'} \delta_{mm'}-\ell' (\ell'+1)\int_\S Y_{\ell ,m} (\eta)  Y_{\ell',m' } d \sigma (\eta) \\
   & =\ell' (\ell +1)\delta_{\ell \ell'} \delta_{mm'}-\ell' (\ell'+1) \delta_{\ell \ell'} \delta_{mm'}=0,
\end{align*}
which proves that the spaces $\bm{E}^+_\ell (\S)$ and $\bm{E}^-_{\ell'} (\S)$ are orthogonal. It is clear that $\bm{E}^0_{\ell'} (\S)$  is orthogonal to $\bm{E}^+_\ell (\S)$ and $\bm{E}^-_{\ell} (\S)$.

Now we show  that the elements of the spaces $\bm{E}^+_\ell(\S)$, $\bm{E}^-_{\ell}(\S)$ and $\bm{E}^0_{\ell}(\S)$  are the restriction of vector harmonic homogeneous polynomials of degree $\ell+1$, $\ell-1$ and  $\ell$ respectively.
 Let $ Y\in\mathcal{H}^\ell$,  then by  \eqref{grad arm esf} and for $x=|x|x'$ we have
 \begin{align}
  \left. \nabla Y(x) \right|_{x=x'} & =\ell \bm{Y}^\vee (x')+\gradesf Y(x'), \;\;  \ell \neq 0,    \nonumber  \\  
  \left. ((2\ell+1)Y(x)x-|x|^2\nabla Y(x))\right|_{x=x'} & = (\ell+1)\bm{Y}^\vee (x')-\gradesf Y(x'), \nonumber \\
  \left. x \times \nabla Y(x) \right|_{x=x'} &=  x'\times \nabla_\sigma Y(x'), \;\;  \ell \neq 0, \label{restriccion} 
 \end{align}
 and the result follows from the fact that   $(2\ell+1)Y(x)x-|x|^2\nabla Y(x),\, x \times \nabla Y(x)$  and  $\nabla Y(x)$ are vector harmonic homogeneous polynomials of degree $\ell +1, \ell$ and $\ell-1$ respectively.

 Finally to see \eqref{L2Decom} we note that 
 $$\bm{E}^+_{\ell-1}( \S)\oplus \bm{E}^-_{\ell+1}(\S)\oplus \bm{E}^0_\ell (\S) \subset (\mathcal{H}^{\ell})^3 $$
 and by considering the dimension of the spaces in the above  inclusion  we have that they are equal and hence \eqref{L2Decom}
 holds.

 \end{proof}

 \begin{remark} 
 
 The above theorem tells us that 
 $$\left\{\frac{1}{\mu_\ell^+} \bm{E}_{\ell,m}^{+} \right\}_{
    \ell \geq   0, \; |m| \leq \ell} ,  \left\{\frac{1}{\mu_\ell^-} \bm{E}_{\ell,m}^{-} \right\}_{
    \ell \geq   1, \; |m| \leq \ell} \textrm{ and } \left\{\frac{1}{\mu_\ell^0} \bm{E}_{\ell,m}^{0} \right\}_{
    \ell \geq   1, \; |m| \leq \ell},$$ are an orthonormal basis of $\bm{L}^2_+ (\S)$, $\bm{L}^2_- (\S)$  and $\bm{L}^2_0 (\S)$ respectively.

 \end{remark} 

  We will use this systems  throughout this section. We also notice that from \eqref{estima ylm} we have the estimate in $\ell$
\begin{equation}\label{estima Elm}
\Vert  \bm{E}_{\ell,m}^{\#} \Vert_{\bm{L}^\infty(\mathbb{S})}=O(\ell^{3/2}), \;\; \# \in \{ +,-,0\}.
\end{equation}

Denote by $\bm{\pi}_+, \bm{\pi}_-,\bm{\pi}_0$ the orthogonal projection of $ \bm{L}^2(\S)$ onto $ \bm{L}_+^2(\mathbb{S})$, $ \bm{L}_-^2(\mathbb{S})$ and  $ \bm{L}_0^2(\mathbb{S})$  respectively.
\begin{theorem}\label{desc Lp}
For any $1 < p<\infty$ the projections $\bm{\pi}_+, \bm{\pi}_-, \bm{\pi}_0$ defined on  
 $\bm{C}^\infty(\mathbb{S})$ can be extended to continuous projections in $\bm{L}^p(\mathbb{S})$, and
\begin{equation}\label{LpDecom}
\bm{L}^p(\mathbb{S})=\bm{L}_+^p(\mathbb{S})\oplus \bm{L}_-^p(\mathbb{S})\oplus \bm{L}_0^p(\mathbb{S}),
\end{equation}
where $\bm{L}_+^p(\mathbb{S})$, $\bm{L}_-^p(\mathbb{S})$ and $ \bm{L}_0^p(\mathbb{S})$ are the image of $\bm{L}^p(\mathbb{S})$ under $\bm{\pi}_+, \bm{\pi}_- \text{ and } \bm{\pi}_0$ respectively.
\end{theorem}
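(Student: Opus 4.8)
The plan is to reduce the whole statement to an a priori bound $\|\bm{\pi}_\#\bm{f}\|_{\bm{L}^p}\lesssim\|\bm{f}\|_{\bm{L}^p}$ on the dense span $\mathcal{S}$ of all the vector spherical harmonics $\bm{E}^{\#}_{\ell,m}$, where by Theorem \ref{descom orto} the $\bm{\pi}_\#$ act as the genuine $L^2$ orthogonal projections. Once these three estimates are in hand, the operators extend continuously to $\bm{L}^p(\S)$; the idempotency, mutual annihilation, and the identity $\bm{\pi}_++\bm{\pi}_-+\bm{\pi}_0=I$ persist under the extension, and the spaces $\bm{L}^p_\#$, being ranges of bounded complementary projections, are closed and yield the direct sum \eqref{LpDecom}. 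The estimate itself I would build out of four ingredients whose boundedness is already available: (i) the pointwise operations $\bm{f}\mapsto(\bm{f}\cdot\eta)\eta$, $g\mapsto g\eta$ and $\bm{f}\mapsto\eta\times\bm{f}$, all trivially bounded on $\bm{L}^p(\S)$; (ii) the vector Riesz transform $\nabla_\sigma(-\Delta_\sigma)^{-1/2}\colon L^p(\S)\to\bm{L}^p(\S)$, bounded by Theorem \ref{theorem_bakry} (equivalently Lemma \ref{lema_3}), together with its formal adjoint $T_0:=(-\Delta_\sigma)^{-1/2}\mathrm{div}_\sigma\colon\bm{L}^p(\S)\to L^p(\S)$, bounded by duality; and (iii) the scalar zonal multipliers $\mathcal{M}_{h_\ell}$ whose symbol $h$ extends to $(0,\infty)$ with $x^k h^{(k)}$ bounded for $k\le 1$, which are bounded on $L^p(\S)$ by Theorem \ref{Strichartz} since $|\tfrac12-\tfrac1p|<\tfrac12$ for every $1<p<\infty$.

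To run the argument on $\bm{f}\in\mathcal{S}$ I would introduce its Hodge coordinates: the radial scalar $a:=\bm{f}\cdot\eta$ and potentials $g,h$ with $\bm{f}=a\eta+\nabla_\sigma g+\eta\times\nabla_\sigma h$. Rather than estimate with $g,h$, which cost a derivative, I replace them by the bounded scalars $s:=T_0\bm{f}$ and $s':=T_0(\eta\times\bm{f})$, so that $\|a\|_{L^p},\|s\|_{L^p},\|s'\|_{L^p}\lesssim\|\bm{f}\|_{\bm{L}^p}$; at the level of coefficients this records the identities $\ell(\ell+1)\widehat{g}_{\ell,m}=-\widehat{\mathrm{div}_\sigma\bm{f}}_{\ell,m}$, i.e. $\widehat{g}_{\ell,m}=-[\ell(\ell+1)]^{-1/2}\widehat{s}_{\ell,m}$, and likewise $\widehat{h}_{\ell,m}=-[\ell(\ell+1)]^{-1/2}\widehat{s'}_{\ell,m}$. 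Inverting the definitions \eqref{E_ell_m_+} through $(2\ell+1)\bm{Y}_{\ell,m}^\vee=\bm{E}^+_{\ell,m}+\bm{E}^-_{\ell,m}$ and $(2\ell+1)\nabla_\sigma Y_{\ell,m}=(\ell+1)\bm{E}^-_{\ell,m}-\ell\bm{E}^+_{\ell,m}$, I can write each $\bm{\pi}_\#\bm{f}$ explicitly as a combination of the fields $\bm{Y}_{\ell,m}^\vee$ and $\nabla_\sigma Y_{\ell,m}$ whose coefficients are $\ell$-rational multipliers applied to $\widehat{a}$ and $\widehat{s}$ (and $\widehat{s'}$ for $\bm{\pi}_0$).

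Boundedness is then checked by splitting each output into a radial part, reconstructed by the bounded map $g\mapsto g\eta$, and a tangential gradient part, reconstructed by $\nabla_\sigma$. For the radial part one verifies that the relevant sequences (such as $\tfrac{\ell}{2\ell+1}$, $\tfrac{[\ell(\ell+1)]^{1/2}}{2\ell+1}$, $\tfrac{1}{2\ell+1}$) are admissible Strichartz symbols, so the radial output equals $\big(\mathcal{M}_u a+\mathcal{M}_v s\big)\eta$ and is bounded. For the tangential part one uses Theorem \ref{theorem_bakry} in reverse, $\|\sum_{\ell,m}c_{\ell,m}\nabla_\sigma Y_{\ell,m}\|_{\bm{L}^p}\sim\|\sum_{\ell,m}[\ell(\ell+1)]^{1/2}c_{\ell,m}Y_{\ell,m}\|_{L^p}$, and checks that $[\ell(\ell+1)]^{1/2}c_{\ell,m}$ is again a Strichartz symbol times $\widehat{a}$ or $\widehat{s}$. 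The case $\bm{\pi}_0$ is the cleanest of all: $\bm{\pi}_0\bm{f}=\eta\times\nabla_\sigma h$, and $\|\nabla_\sigma h\|_{\bm{L}^p}\sim\|(-\Delta_\sigma)^{1/2}h\|_{L^p}=\|s'\|_{L^p}\lesssim\|\bm{f}\|_{\bm{L}^p}$.

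The one genuinely delicate point, and the reason a naive coefficientwise estimate fails, is that the radial component of $\bm{E}^-_{\ell,m}$ (and of $\bm{E}^+_{\ell,m}$) carries a factor $\ell$, so the gradient potential enters $\bm{\pi}_\pm\bm{f}$ through the unbounded weight $\tfrac{\ell(\ell+1)}{2\ell+1}\widehat{g}_{\ell,m}$. The whole strategy hinges on absorbing this lost derivative into $T_0=(-\Delta_\sigma)^{-1/2}\mathrm{div}_\sigma$: after the substitution $\ell(\ell+1)\widehat{g}=-\widehat{\mathrm{div}_\sigma\bm{f}}$, every surviving symbol is of order zero and asymptotically constant, so Theorem \ref{Strichartz} applies. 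I expect the main bookkeeping obstacle to be verifying that these symbols—several of which involve $[\ell(\ell+1)]^{1/2}$ and hence a mild square-root behaviour near the origin—really satisfy $|x^k h^{(k)}(x)|\le A$ (after choosing smooth extensions off the integers and discarding the harmless $\ell=0$ term, where $\nabla_\sigma Y_{0,0}=0$). Conceptually, however, the crux is simply that $\nabla_\sigma$ and $\Delta_\sigma^{-1}$ must be paired before estimating, which is precisely what Bakry's theorem makes quantitative.
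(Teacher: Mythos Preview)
Your proof is correct and follows essentially the same approach as the paper: both bound $\bm{\pi}_+$ (and analogously $\bm{\pi}_-$) by splitting into pieces governed by the radial scalar $\bm{f}\cdot\bm{\nu}$ and by $\langle\bm{f},\nabla_\sigma Y_{\ell,m}\rangle$, then invoking Strichartz's zonal multiplier theorem for the $\ell$-weights and Bakry's Riesz-transform estimate (plus its adjoint, for the divergence side) to control the $\nabla_\sigma$'s. The only organizational differences are that the paper writes out the four cross-terms $\bm{T}_1,\dots,\bm{T}_4$ of $\bm{\pi}_+$ directly rather than first passing to Hodge coordinates $(a,s,s')$, and recovers $\bm{\pi}_0$ as $I-\bm{\pi}_+-\bm{\pi}_-$ instead of handling it directly via $s'$ as you do.
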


\begin{proof}
   See the appendix.
   \end{proof}

We define the operator acting on functions on $\S,$
\begin{equation}\label{operador L_-}
   \mathcal{L}_{-}g= (\bm{\mathcal{M}_\ell g}) ^\vee +\gradesf g .
\end{equation}

By \eqref{grad arm esf}, $$\nabla (PY)=P(\mathcal{L}_-Y), \;\;Y\in \mathcal{H}^\ell. $$

\begin{theorem}\label{espacio -1}
Let $1<p<\infty$. Then

\begin{itemize}
\item[a)] The operator $\mathcal{L}_-$ maps continuously $W^{1,p}(\mathbb{S})$ onto $\bm{L}_-^p(\mathbb{S})$ and its kernel are the constant functions. 
\item[b)] For $g \in W^{1,p}(\S)$ we have
\begin{equation}\label{PL}
 P(\mathcal{L}_-g)(x)= \nabla (Pg)(x)\qquad x\in\B. 
\end{equation}
\item[c)] $\bm{f}\in \bm{L}_-^p(\mathbb{S})$ if and only if there exists $ g  \in W^{1,p}(\mathbb{S})$ such that 
$$\bm{f}(\eta)= \lim_{r\rightarrow 1^-}    \nabla ( Pg)(r \eta).$$ 
\end{itemize}
 \end{theorem}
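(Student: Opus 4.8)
The three parts are linked, and the engine behind all of them is the single computation
\[
\mathcal{L}_- Y_{\ell,m} = \ell\,\bm{Y}_{\ell,m}^\vee + \nabla_\sigma Y_{\ell,m} = \bm{E}_{\ell,m}^-,
\]
together with the \emph{pointwise orthogonality} of the two summands in \eqref{operador L_-}: for each $\xi\in\S$ the field $(\mathcal{M}_\ell g)^\vee(\xi)=(\mathcal{M}_\ell g)(\xi)\,\xi$ is normal to $\S$ while $\nabla_\sigma g(\xi)$ is tangential, whence
\[
|\mathcal{L}_- g(\xi)|^2 = |(\mathcal{M}_\ell g)(\xi)|^2 + |\nabla_\sigma g(\xi)|^2, \qquad \xi\in\S.
\]
I would record this first, since it gives at once the lower bound $|\mathcal{L}_- g(\xi)|\ge|\nabla_\sigma g(\xi)|$ that will drive surjectivity and the description of the kernel, and since combined with Lemma \ref{suma de grads esf} it shows $\mathcal{L}_- g=\sum_{\ell,m}a_{\ell,m}\bm{E}_{\ell,m}^-$ in $\bm{D}'(\mathbb{S})$ whenever $g=\sum a_{\ell,m}Y_{\ell,m}\in W^{1,p}(\S)$.

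For part (a) the plan is to prove $\|\mathcal{L}_- g\|_{\bm{L}^p}\sim\|\nabla_\sigma g\|_{\bm{L}^p}\sim\|(-\Delta_\sigma)^{1/2}g\|_{L^p}$. The lower bound is free from the orthogonality identity. For the upper bound I must control the normal part $\mathcal{M}_\ell g$; factoring $\mathcal{M}_\ell=\mathcal{M}_h\circ(-\Delta_\sigma)^{1/2}$ with symbol $h(x)=\sqrt{x/(x+1)}$, which is a bounded symbol satisfying the bounds \eqref{Strichartj} for every $N$, Theorem \ref{Strichartz} gives $\|\mathcal{M}_\ell g\|_{L^p}\lesssim\|(-\Delta_\sigma)^{1/2}g\|_{L^p}$ for all $1<p<\infty$, and Lemma \ref{lema_3} converts this into $\|\nabla_\sigma g\|_{\bm{L}^p}$. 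Continuity and the kernel (the estimate forces $\nabla_\sigma g=0$, i.e.\ $g$ constant) follow immediately. For surjectivity I would observe that modulo constants the two-sided estimate makes $\mathcal{L}_-$ bounded below (the Poincaré-type control of $\|g\|_{L^p}$ by $\|(-\Delta_\sigma)^{1/2}g\|_{L^p}$ on mean-zero $g$ again comes from Strichartz, applied to $1/\sqrt{\ell(\ell+1)}$), so its range is closed in $\bm{L}^p(\S)$; since the range contains every $\bm{E}_{\ell,m}^-$ and these span a dense subspace of $\bm{L}_-^p(\mathbb{S})$ by Theorem \ref{desc Lp}, the range is exactly $\bm{L}_-^p(\mathbb{S})$.

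Part (b) I would reduce to harmonics. The identity $\nabla(PY)=P(\mathcal{L}_- Y)$ recorded before the theorem, which is \eqref{grad arm esf} read at $r=1$ together with the fact (Theorem \ref{descom orto}) that $\bm{E}_{\ell,m}^-$ extends to the harmonic polynomial $\nabla Y_{\ell,m}$, gives $\nabla(PY_{\ell,m})=P(\bm{E}_{\ell,m}^-)$. For general $g\in W^{1,p}(\S)$ and fixed $x\in\B$, the series $Pg(x)=\sum a_{\ell,m}r^\ell Y_{\ell,m}(x')$ converges with all derivatives uniformly on compact subsets of $\B$ (polynomially bounded coefficients against the decay of $r^\ell$), so it may be differentiated termwise to give $\nabla(Pg)(x)=\sum a_{\ell,m}P(\bm{E}_{\ell,m}^-)(x)$. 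On the other side, testing the $\bm{D}'$-identity $\mathcal{L}_- g=\sum a_{\ell,m}\bm{E}_{\ell,m}^-$ against the smooth kernel $P(x,\cdot)$ interchanges sum and integral, so $P(\mathcal{L}_- g)(x)$ equals the same series; equating the two yields \eqref{PL}.

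Finally part (c) is a corollary. If $\bm{f}\in\bm{L}_-^p(\mathbb{S})$, choose $g\in W^{1,p}(\S)$ with $\mathcal{L}_- g=\bm{f}$ by (a); then by (b) $\nabla(Pg)=P\bm{f}$, which converges non-tangentially, hence radially, to $\bm{f}$ a.e.\ by the Fatou theorem applied componentwise (Theorem \ref{Fatou}). Conversely, if $g\in W^{1,p}(\S)$ then $\nabla(Pg)=P(\mathcal{L}_- g)$ with $\mathcal{L}_- g\in\bm{L}_-^p(\mathbb{S})$ by (a)--(b), and Fatou identifies the radial limit as $\mathcal{L}_- g\in\bm{L}_-^p(\mathbb{S})$. \emph{The main obstacle} is the upper (continuity) bound in (a): it is the only place where genuine harmonic analysis enters, through the verification that $\sqrt{x/(x+1)}$ meets the hypotheses of Strichartz's multiplier theorem uniformly in $p$. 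Everything else---the lower bound, the closed range, and parts (b)--(c)---is then soft, resting on the pointwise orthogonality and on reduction to spherical harmonics.
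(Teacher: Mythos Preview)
Your proposal is correct and follows essentially the same approach as the paper: the same factorization $\mathcal{M}_\ell=\mathcal{M}_{\sqrt{\ell/(\ell+1)}}\circ(-\Delta_\sigma)^{1/2}$ with Strichartz's theorem for continuity, the same pointwise orthogonality of the normal and tangential parts for the kernel and the lower bound, and the same reduction to $\nabla(PY_{\ell,m})=P(\bm{E}_{\ell,m}^-)$ for part (b). The only cosmetic differences are that the paper proves surjectivity by an explicit Cauchy-sequence/approximation argument rather than your closed-range-plus-density packaging, and handles part (b) by approximating $g$ with smooth functions rather than by direct termwise differentiation of the series; both variants rest on the same estimates.
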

 \begin{proof} 
\textit{ a)} Let $g\in W^{1,p}(\mathbb{S})$  with expansion $g=\sum_{\ell \geq 0} \sum_{|m| \leq \ell} a_{\ell , m} Y_{\ell, m}$ in $\dist.$ 
Then \begin{equation*}
 \mathcal{M}_{\ell}g=\sum_{\ell \geq 0} \sum_{|m| \leq \ell} \ell a_{\ell,m}Y_{\ell,m}=\mathcal{M}_{\beta_\ell}(-\Delta_\sigma)^{1/2} g\in  L^p(\mathbb{S}),
 \end{equation*}
 where $\beta_\ell=\frac{\ell}{\sqrt{\ell(\ell+1)}},$ if $\ell>0$ and $\beta_0=0$,
 and by Theorem \ref{Strichartz} and Lemma \ref{lema_3} we have that
  $\mathcal{L}_-: W^{1,p}(\mathbb{S})\rightarrow \bm{L}^p(\mathbb{S})$ is continuous. 
 
Since  $\mathcal{L}_-Y_{\ell,m}= \bm{E}_{\ell,m}^{-}$, then if $g$ is smooth, 
 \begin{equation*}
 \mathcal{L}_-g= \sum_{\ell \geq 0} \sum_{|m| \leq \ell} a_{\ell,m}\bm{E}_{\ell,m}^{-}\in \bm{L}_-^p(\mathbb{S}).
 \end{equation*}
 
Finally approximating a $g\in W^{1,p}(\mathbb{S})$  by a sequence of smooth functions, it follows that $\mathcal{L}_-g\in \bm{L}_-^p(\mathbb{S}). $

Notice that for  $g \in  W^{1,p}(\mathbb{S})$,  $ (\bm{\mathcal{M}_{\ell}g})^\vee(x')$ and $\gradesf g(x')$ are orthogonal for almost every $x'\in \S. $ From this it follows that 
 $\mathcal{L}_-g=0$ if and only if  $\nabla_\sigma g=0$, namely if and only if  $g$ is a constant.

 Now we prove that $\mathcal{L}_{-}$ is onto.
 
  If $\bm{f}\in \lpm$ is smooth then we can write
 \begin{equation*}
 \bm{f}= \sum_{\ell \geq 1} \sum_{|m| \leq \ell} a_{\ell,m}\bm{E}_{\ell,m}^{-}.
 \end{equation*}
 Then $\bm{f}=\mathcal{L}_-g$, with $g=\sum_{\ell \geq 1} \sum_{|m| \leq \ell} a_{\ell,m}Y_{\ell,m}$. 
 
 Again by the orthogonality of $ (\bm{\mathcal{M}_{\ell}g})^\vee(x')$ and $\gradesf g(x')$ implies that $$\vert \mathcal{L}_- g(x')\vert=\vert (\bm{\mathcal{M}_{\ell}g})^\vee(x') +\gradesf g(x')\vert\sim \left(\vert (\bm{\mathcal{M}_{\ell}g})^\vee(x')\vert^p+ \vert\gradesf g(x')\vert^p\right)^{1/p},$$
 uniformly in $x'$, hence

\begin{equation*}
\Vert (-\Delta)^{1/2}g\Vert_{L^p}\lesssim\Vert\gradesf g\Vert_{\bm{L}^p}\lesssim\Vert \bm{f} \Vert_{\bm{L}^p}.
\end{equation*}
 Moreover, since $\int_{\S}g(x')dx'=0,$ we can write $g=\mathcal{\gamma_\ell}(-\Delta_\sigma)^{1/2}$, with
 $\gamma_\ell=\frac{\ell}{\sqrt{\ell(\ell+1)}},$ if $\ell>0$ and $\gamma_0=0.$

 Hence
 \begin{equation*}
 \Vert g\Vert_{L^p}\lesssim\Vert (-\Delta_\sigma)^{1/2}g\Vert_{L^p}\lesssim\Vert \bm{f} \Vert_{\bm{L}^p},
 \end{equation*}
 that is, \begin{equation*}
 \Vert g\Vert_{W^{1,p}}\lesssim\Vert \bm{f} \Vert_{\bm{L}^p}.
 \end{equation*}
 
 Now, if  $\bm{f}$ is any element of $\lpm$, let $\bm{f}_n\in\lpm \cap \bm{C}^\infty(\mathbb{S})$ such that $\bm{f}_n$ converges to $\bm{f}$ in $\bm{L}^p(\mathbb{S})$ and $g_n\in C^\infty(\mathbb{S})$ such that $\mathcal{L}_-  ( g_n)=\bm{f}_n$. The previous argument shows that
 \begin{equation*}
 \Vert g_n-g_m\Vert_{W^{1,p}}\leq C\Vert \bm{f}_n-\bm{f}_m \Vert_{\bm{L}^p},
 \end{equation*}
 and the limit $g$ of $g_n$ in $W^{1,p}(\S)$ satisfies $\mathcal{L}_-g=\bm{f}$.

\textit{b)} Let $g \in C^\infty (\S)$ such that $g=\sum_{\ell \geq 0} \sum_{|m| \leq \ell} a_{\ell,m}Y_{\ell,m}$ in $W^{1,p}(\S).$ For $x \in \mathbb{B}$ fixed  by \eqref{grad arm esf} (noticing that all the series below converge uniformly on compact subsets of $\B$) 
$$  P (\mathcal{L}_-g)(x)=\sum_{\ell \geq 0} \sum_{|m| \leq \ell} a_{\ell,m} P (\mathcal{L}_-Y_{\ell , m})(x)  =\sum_{\ell \geq 0} \sum_{|m| \leq \ell} a_{\ell,m} \nabla (P Y_{\ell , m})(x)$$
$$=\nabla \left(\sum_{\ell \geq 0} \sum_{|m| \leq \ell} a_{\ell,m} P Y_{\ell , m}\right)(x)= \nabla \left(P\sum_{\ell \geq 0} \sum_{|m| \leq \ell} a_{\ell,m}  Y_{\ell , m}\right)(x) =\nabla (P g)(x).$$
Hence the result holds in this case.
Now let  $g \in {W^{1,p}(\S)}$ and $g_n \in C^\infty (\S)$ such that $g_n \rightarrow g$ in $W^{1,p}(\S)$. If $x \in \B$ 
we have $P(\mathcal{L}_- g_n )(x) \rightarrow P(\mathcal{L}_- g )(x)$. On the other hand 
$\nabla (Pg_n)(x) \rightarrow \nabla (Pg)(x)$ and $\nabla (Pg_n)(x)=P(\mathcal{L}_-g_n)(x) \rightarrow  P( \mathcal{L}_-g)(x)$, then $\nabla (P g)= P (\mathcal{L}_-g)$.

\textit{c)} This follows directly from a) and b).
 
 \end{proof}

 Now we consider the operator acting on functions on $\S,$
\begin{equation}\label{operador L_0}
   \mathcal{L}_{0}g(\eta)= \eta \times \nabla_\sigma g (\eta) .
\end{equation}

 \begin{theorem} \label{espacio 0}
Let $1<p<\infty$. Then
\begin{itemize}
\item[a)] The operator $\mathcal{L}_0$ maps continuously $W^{1,p}(\mathbb{S})$ onto $\bm{L}_0^p(\mathbb{S})$ and its kernel are the constant functions. 
\item[b)] For $g \in W^{1,p}(\S)$ we have
\begin{equation}\label{PL_0}
   P (\mathcal{L}_0 g)(x)=x \times \nabla (Pg)(x), \;\; x \in \B.   
\end{equation}
\item[c)] $\bm{f}\in \bm{L}_0^p(\mathbb{S})$ if and only if there exists $ g  \in W^{1,p}(\mathbb{S})$ such that 
$$\bm{f}(\eta)= \lim_{r\rightarrow 1^-}  r \eta \times \nabla (Pg)(r \eta)  .$$ 
\end{itemize}

\end{theorem}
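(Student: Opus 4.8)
The plan is to prove Theorem~\ref{espacio 0} by closely following the pattern already established in the proof of Theorem~\ref{espacio 1}, since the operator $\mathcal{L}_0 g = \eta\times\nabla_\sigma g$ is structurally parallel to $\mathcal{L}_-$: both are first-order operators built from $\nabla_\sigma$ acting on $W^{1,p}(\S)$, and both land in one of the orthogonal summands of the $\bm{L}^p$ decomposition from Theorem~\ref{desc Lp}. The essential new observation is that the cross product with the (unit) radial field $\eta$ is a pointwise rotation in the tangent plane, so $|\mathcal{L}_0 g(\eta)| = |\nabla_\sigma g(\eta)|$ at almost every point; this is even cleaner than the $\mathcal{L}_-$ case because there is no radial component to account for and no need to invoke the pointwise orthogonality of two summands.

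\textbf{Part (a).} First I would establish continuity. Writing $g=\sum_{\ell\geq 0}\sum_{|m|\leq\ell} a_{\ell,m}Y_{\ell,m}$ in $\dist$, Lemma~\ref{suma de grads esf} gives $\gradesf g=\sum a_{\ell,m}\gradesf Y_{\ell,m}$ in $\bm{D}'(\S)$, whence $\mathcal{L}_0 g = \sum a_{\ell,m}\,\eta\times\gradesf Y_{\ell,m}=\sum a_{\ell,m}\bm{E}^0_{\ell,m}$. Continuity of $\mathcal{L}_0:W^{1,p}(\S)\to\bm{L}^p(\S)$ follows from the pointwise identity $|\mathcal{L}_0 g|=|\gradesf g|$ a.e.\ together with Lemma~\ref{lema_3}, which gives $\|\gradesf g\|_{\bm{L}^p}\sim\|(-\Delta_\sigma)^{1/2}g\|_{L^p}$; thus $\|\mathcal{L}_0 g\|_{\bm{L}^p}=\|\gradesf g\|_{\bm{L}^p}\lesssim\|g\|_{W^{1,p}}$. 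Since $\mathcal{L}_0 Y_{\ell,m}=\bm{E}^0_{\ell,m}$, the image lands in $\bm{L}^p_0(\S)$ for smooth $g$, and by density (approximating $g\in W^{1,p}$ by smooth functions and using the just-proved continuity) the image lies in $\bm{L}^p_0(\S)$ in general. The kernel is exactly the constants, because $|\mathcal{L}_0 g|=|\gradesf g|$ forces $\mathcal{L}_0 g=0\iff\gradesf g=0\iff g$ constant. Surjectivity is proved exactly as for $\mathcal{L}_-$: for smooth $\bm{f}=\sum_{\ell\geq 1}\sum_{|m|\leq\ell}a_{\ell,m}\bm{E}^0_{\ell,m}\in\bm{L}^p_0(\S)$, set $g=\sum a_{\ell,m}Y_{\ell,m}$, so $\mathcal{L}_0 g=\bm{f}$; the two-sided estimate $\|g\|_{W^{1,p}}\lesssim\|\bm{f}\|_{\bm{L}^p}$ (via $\|\gradesf g\|_{\bm{L}^p}=\|\bm{f}\|_{\bm{L}^p}$ and the Strichartz multiplier $\gamma_\ell=\ell/\sqrt{\ell(\ell+1)}$ recovering $g$ from $(-\Delta_\sigma)^{1/2}g$) lets one pass to a Cauchy sequence $g_n$ whose $W^{1,p}$-limit $g$ satisfies $\mathcal{L}_0 g=\bm{f}$ for arbitrary $\bm{f}\in\bm{L}^p_0(\S)$.

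\textbf{Part (b).} For the Poisson-transform identity I would first verify it on a single harmonic $Y\in\mathcal{H}^\ell$. By~\eqref{grad arm esf}, $\nabla(PY)(x)=r^{\ell-1}(\ell\,\bm{Y}^\vee(x')+\gradesf Y(x'))=P(\mathcal{L}_-Y)(x)$, and crossing with $x=rx'$ kills the radial term $\bm{Y}^\vee$ while scaling the tangential term, giving $x\times\nabla(PY)(x)=r^\ell\,x'\times\gradesf Y(x')=P(\eta\times\gradesf Y)(x)=P(\mathcal{L}_0 Y)(x)$. For smooth $g$ one sums this identity over $\ell,m$, all series converging uniformly on compact subsets of $\B$ as in part (b) of Theorem~\ref{espacio 1}. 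Finally, for general $g\in W^{1,p}(\S)$ take $g_n\to g$ smoothly in $W^{1,p}$; then at fixed $x\in\B$ one passes to the limit on both sides using continuity of $g\mapsto P(\mathcal{L}_0 g)(x)$ and of $g\mapsto x\times\nabla(Pg)(x)$, exactly mirroring the closing argument of Theorem~\ref{espacio 1}(b).

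\textbf{Part (c)} is immediate by combining (a) and (b): $\bm{f}\in\bm{L}^p_0(\S)$ iff $\bm{f}=\mathcal{L}_0 g$ for some $g\in W^{1,p}(\S)$ (by surjectivity in (a)), and by (b) together with the Fatou-type boundary convergence already established (Theorem~\ref{Fatou} and the Poisson-transform boundary theory), $\mathcal{L}_0 g$ is the nontangential boundary value of $x\times\nabla(Pg)(x)$, i.e.\ $\bm{f}(\eta)=\lim_{r\to 1^-} r\eta\times\nabla(Pg)(r\eta)$. I expect the only genuinely delicate point to be the pointwise almost-everywhere identity $|\eta\times\gradesf g(\eta)|=|\gradesf g(\eta)|$ for $g\in W^{1,p}$, which needs the a.e.-defined tangential gradient from Lemma~\ref{lema_3}; once this is in hand, the geometry (cross product with the unit normal is an isometry of the tangent plane) makes the argument in fact slightly simpler than the $\mathcal{L}_-$ case, since there is no mixing of radial and tangential parts to disentangle.
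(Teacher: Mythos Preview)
Your proposal is correct and follows essentially the same approach as the paper. Both arguments exploit the pointwise identity $|\eta\times\nabla_\sigma g(\eta)|=|\nabla_\sigma g(\eta)|$ (since $\nabla_\sigma g$ is tangent and $\eta$ is unit) to reduce continuity and the kernel description to Lemma~\ref{lema_3}, establish surjectivity by writing smooth $\bm{f}\in\bm{L}^p_0(\S)$ as $\mathcal{L}_0 g_n$ and passing to the limit via a Cauchy-sequence argument in $W^{1,p}$, verify (b) first for $g=Y\in\mathcal{H}^\ell$ using \eqref{restriccion} and then extend by density exactly as in Theorem~\ref{espacio -1}(b), and deduce (c) from (a), (b) and the standard boundary behavior of the Poisson integral.
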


\begin{proof} 
\textit{ a)} Let $g \in W^{1,p}(\S)$ with expansion $g=\sum_{\ell \geq 0} \sum_{|m| \leq \ell} a_{\ell , m} Y_{\ell, m}$ in $\dist$ and $\tilde{g}=\sum_{\ell \geq 1} \sum_{|m| \leq \ell} a_{\ell , m} Y_{\ell, m}$. By using Lemma \ref{lema_3} we have
$$\| \mathcal{L}_0 g \|_{\bm{L}^p}=\| \nabla_\sigma \tilde{g} \|_{\bm{L}^p} \lesssim \| (- \Delta_\sigma)^{\frac{1}{2}} \tilde{g} \|_{L^p} \lesssim \| g \|_{W^{1,p}},$$
implying  that $\mathcal{L}_0$ maps continuously $W^{1,p}(\mathbb{S})$ onto $\bm{L}_0^p(\mathbb{S})$. 

Let us  see now that $\mathcal{L}_0$ is onto: let $\bm{f} \in \bm{L}^p_0(\S)$ be and $\bm{f}_n \in \bm{C}^\infty (\S) \cap \bm{L}^p_0(\S) $
converging to $\bm{f}$ in $\bm{L}^p(\mathbb{S})$. We have that
\begin{align*}
    \bm{f}_n (\eta) &= \sum_{\ell \geq 1} \sum_{|m| \leq \ell} a^n_{\ell, m} \bm{E}_{\ell,m}^{0}(\eta) = \sum_{\ell \geq 1} \sum_{|m| \leq \ell} a^n_{\ell, m} 
 \eta \times \nabla_\sigma Y_{\ell,m}(\eta)    \\  &= \eta \times \nabla_\sigma (g_n)= \mathcal{L}_0(g_n),
 \end{align*}
where $g_n (\eta)=\sum_{\ell \geq 1} \sum_{|m| \leq \ell} a^n_{\ell, m} 
  Y_{\ell,m}(\eta)  $. Since $\int_{\S}g_n(\eta)d\sigma(\eta)=0$, it follows that

$$\|g_n\|_{W^{1,p}} \sim \| \nabla_\sigma  g_n\|_{L^{p}}\lesssim\| \bm{f}_n \|_{\bm{L}^p},$$ proving that $g_n \in W^{1, p}(\S)$.

In a similar way 
$$\|g_n -g_m\|_{W^{1,p}} \lesssim  \| \bm{f}_n -\bm{f}_m \|_{\bm{L}^p},$$
so there is $g \in W^{1,p} (\S)$ such that $g_n$ converges to $g \in W^{1,p}(\S)$. As
$\mathcal{L}_0$ is continuous, $\mathcal{L}_0(g_n)=\bm{f}_n$ converges to $\mathcal{L}_0(g)$ and it must be verified that  $\mathcal{L}_0(g)=\bm{f}$.

Finally, we will study the kernel of $\mathcal{L}_0$. Let $g \in W^{1,p}(\S)$ be such that $\mathcal{L}_0 g(\eta)=\eta \times \nabla_\sigma g(\eta)=0$ implying  that $\nabla_\sigma g=0$ in $\bm{L}^p(\S)$, and  by Lemma  \ref{lema_3} $(-\Delta)^{\frac{1}{2}}g=0$ in $L^p(\S)$. If $g=\sum_{\ell \geq 0} \sum_{|m| \leq \ell} a_{\ell, m} Y_{\ell , m}$ in $\dist$, then $(-\Delta)^{\frac{1}{2}}g=\sum_{\ell \geq 1} \sum_{|m| \leq \ell} \ell (\ell +1) a_{\ell, m} Y_{\ell , m}=0$ in $\dist$. This implies that  $a_{\ell ,m}=0, \; \ell \geq 1$ and $|m| \leq \ell$ and therefore $g$ is a constant.

\textit{ b)} Note that if $g=Y,  \;\;Y\in \mathcal{H}^\ell$, then it follows from \eqref{restriccion} that
$P(\mathcal{L}_0 g)(x)= x \times \nabla Y(x)$. Then the proof of b) for $g\in W^{1,p}$ follows as the proof of part b) of Theorem \ref{espacio -1}. 

\textit{c)} This follows directly from a) and b).

\end{proof}

 Finally  we define the  operator acting on functions on S,
\begin{equation}\label{operador L_+}
   \mathcal{L}_{+}g= (\bm{\mathcal{M}_{\ell+1} g}) ^\vee -\gradesf g .
\end{equation}
 
 We have
\begin{theorem}\label{espacio +1}
Let $1<p<\infty$. Then

\begin{itemize}
\item[a)] The operator $\mathcal{L}_+$ is an isomorphism of  $W^{1,p}(\mathbb{S})$ onto $\bm{L}_+^p(\mathbb{S}).$ 

\item[b)] For $g \in W^{1,p}(\S)$ we have
\begin{equation}\label{PL_1}
   P (\mathcal{L}_+ g)(x)=[ 2 x \cdot \nabla (Pg)(x) + Pg(x)] x- |x|^2 \nabla (Pg)(x), \;\; x \in \B.   
\end{equation}

\item[c)]
$\bm{f}\in \bm{L}_+^p(\mathbb{S})$ if and only if there exists $ g  \in W^{1,p}(\mathbb{S})$ such that 
$$\bm{f}(\eta)= \lim_{r\rightarrow 1^-} \left(  [ 2 r \eta \cdot \nabla (Pg)(r \eta) + Pg(r \eta)] r \eta - r^2 \nabla (Pg)(r \eta)\right) .$$ 
\end{itemize}
 \end{theorem}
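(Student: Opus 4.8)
The plan is to follow the template of Theorems \ref{espacio -1} and \ref{espacio 0}, adapting the three steps (continuity, the two-sided estimate/surjectivity, and the kernel) to $\mathcal{L}_+$. The decisive structural difference is that the radial multiplier here is $\ell+1$, which never vanishes, whereas for $\mathcal{L}_-$ and $\mathcal{L}_0$ the relevant multiplier $\ell$ killed the $\ell=0$ mode; this is precisely why $\mathcal{L}_+$ turns out to be injective—hence an isomorphism—rather than merely surjective with constant kernel.

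For part a), continuity is obtained as before: writing $g=\sum a_{\ell,m}Y_{\ell,m}$, the radial part is $\mathcal{M}_{\ell+1}g$, and for $\ell\ge 1$ we factor $\ell+1=\sqrt{\ell(\ell+1)}\,\sqrt{(\ell+1)/\ell}$, so $\mathcal{M}_{\ell+1}g$ equals $a_{0,0}$ plus $\mathcal{M}_{\sqrt{(\ell+1)/\ell}}(-\Delta_\sigma)^{1/2}g$. Since $\sqrt{(\ell+1)/\ell}$ agrees for $\ell\ge 1$ with a symbol on $(0,\infty)$ satisfying the hypotheses of Theorem \ref{Strichartz} (regularized near the origin so as to stay bounded) and $|a_{0,0}|\lesssim\|g\|_{L^p}$, we get $\|\mathcal{M}_{\ell+1}g\|_{L^p}\lesssim\|g\|_{W^{1,p}}$; together with Lemma \ref{lema_3} this yields $\|\mathcal{L}_+g\|_{\bm{L}^p}\lesssim\|g\|_{W^{1,p}}$. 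Since $\mathcal{L}_+Y_{\ell,m}=\bm{E}_{\ell,m}^{+}$, the image of a smooth $g$ lies in $\bm{L}_+^p(\mathbb{S})$, and density gives $\mathcal{L}_+(W^{1,p})\subset\bm{L}_+^p$.

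The core of part a) is the two-sided estimate. Pointwise $(\bm{\mathcal{M}_{\ell+1}g})^\vee(x')$ is radial and $\nabla_\sigma g(x')$ is tangential, so they are orthogonal and $|\mathcal{L}_+g(x')|\sim(|(\mathcal{M}_{\ell+1}g)(x')|^p+|\nabla_\sigma g(x')|^p)^{1/p}$. Hence $\|\nabla_\sigma g\|_{\bm{L}^p}\lesssim\|\mathcal{L}_+g\|_{\bm{L}^p}$ and $\|\mathcal{M}_{\ell+1}g\|_{L^p}\lesssim\|\mathcal{L}_+g\|_{\bm{L}^p}$. The first, via Bakry (Lemma \ref{lema_3}) and the inverse multiplier $1/\sqrt{\ell(\ell+1)}$ (again Theorem \ref{Strichartz}), controls the mean-free part of $g$ in $W^{1,p}$; the second controls the zero mode, since $a_{0,0}=\int_\S(\mathcal{M}_{\ell+1}g)Y_{0,0}\,d\sigma$ is dominated by $\|\mathcal{M}_{\ell+1}g\|_{L^p}$. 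Adding these gives $\|g\|_{W^{1,p}}\lesssim\|\mathcal{L}_+g\|_{\bm{L}^p}$, which simultaneously proves injectivity (if $\mathcal{L}_+g=0$ then $\nabla_\sigma g=0$ and $a_{0,0}=0$, so $g=0$) and boundedness of the inverse. Surjectivity follows the scheme of Theorem \ref{espacio -1}: expand a smooth $\bm{f}\in\bm{L}_+^p$ as $\sum a_{\ell,m}\bm{E}_{\ell,m}^{+}$, set $g=\sum a_{\ell,m}Y_{\ell,m}$ so that $\mathcal{L}_+g=\bm{f}$, and pass to general $\bm{f}$ by approximating with smooth elements of $\bm{L}_+^p$ (available since $\bm{\pi}_+$ is continuous on $\bm{L}^p$ by Theorem \ref{desc Lp} and preserves smoothness), using the a priori estimate to see that the preimages are Cauchy in $W^{1,p}$.

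For part b), I first check the identity on $Y\in\mathcal{H}^\ell$: by \eqref{restriccion}, $\mathcal{L}_+Y$ is the restriction to $\S$ of the harmonic homogeneous polynomial $(2\ell+1)Y(x)x-|x|^2\nabla Y(x)$, so $P(\mathcal{L}_+Y)(x)$ equals this polynomial. Since $PY$ is the degree-$\ell$ homogeneous extension of $Y$, Euler's identity gives $x\cdot\nabla(PY)(x)=\ell\,PY(x)$, whence $2x\cdot\nabla(PY)(x)+PY(x)=(2\ell+1)Y(x)$ and the right-hand side of \eqref{PL_1} reduces to exactly $(2\ell+1)Y(x)x-|x|^2\nabla Y(x)$. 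The formula extends to finite linear combinations and, by the local-uniform convergence argument used in part b) of Theorem \ref{espacio -1} (convergence in $W^{1,p}$ forces locally uniform convergence of the harmonic extensions and their gradients on compact subsets of $\B$), to all $g\in W^{1,p}(\S)$; part c) is then immediate by letting $r\to 1^-$. I expect the only genuinely new point—and the main thing to get right—to be the lower bound $\|g\|_{W^{1,p}}\lesssim\|\mathcal{L}_+g\|_{\bm{L}^p}$, specifically the recovery of the zero mode $a_{0,0}$ from the radial part of $\bm{f}$, since this is exactly what distinguishes the isomorphism statement here from the surjections-with-kernel obtained for $\mathcal{L}_-$ and $\mathcal{L}_0$.
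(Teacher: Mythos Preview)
Your proposal is correct and follows essentially the same route as the paper. The paper's proof of a) is in fact the single sentence ``The proof is analogous to the proof of Theorem \ref{espacio -1} a)'', and your write-up is precisely the analogous argument with the one necessary modification you identify: since the radial multiplier is $\ell+1$ rather than $\ell$, the zero mode $a_{0,0}$ is recovered from $\mathcal{M}_{\ell+1}g$ and the kernel is trivial, yielding an isomorphism. For b) the paper does the same spherical-harmonic computation you outline (Euler's relation $x\cdot\nabla(PY)=\ell\,PY$ term by term, then the identification with the harmonic polynomial $(2\ell+1)Y(x)x-|x|^2\nabla Y(x)$ via \eqref{restriccion}), and passes to general $g\in W^{1,p}$ by the same density/locally-uniform-convergence argument used in Theorem \ref{espacio -1} b); part c) is immediate in both.
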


  \begin{proof} 
a) 
     The proof is analogous to the proof of Theorem \ref{espacio -1} a).
  
b) First consider    $g=\sum_{\ell \geq 0} \sum_{|m| \leq \ell} a_{\ell , m } Y_{\ell , m} \in C^\infty(\S)$. Define
$$ h(x)= Pg(x)=\sum_{\ell \geq 0} \sum_{|m| \leq \ell} a_{\ell , m }Y_{\ell , m}(x), \hspace{0.3cm } x \in \mathbb{B}.$$ then 
by \eqref{grad arm esf},
$$ x\cdot\nabla h(x)=\sum_{\ell \geq 0} \sum_{|m| \leq \ell} \ell a_{\ell,m} Y_{\ell,m}(x).$$
Hence  if we let \begin{equation*}
\bm{u}(x)=\{2x\cdot \nabla h(x)+h(x)\}x-|x|^2\nabla h(x),
\end{equation*}  
it is easy to see that $\bm{u}$ is harmonic on $\B$ and 
$$\bm{u}(x)=\sum_{\ell \geq 0} \sum_{|m| \leq \ell}\{(2 \ell+1)a_{\ell , m }Y_{\ell , m}(x)\}x-
\sum_{\ell \geq 0} \sum_{|m| \leq \ell}a_{\ell , m }\nabla Y_{\ell , m}(x) .$$
Furthermore by \eqref{grad arm esf} for $x=r\eta$ this simplifies to 
\begin{align*}
\bm{u}(r\eta) &=\sum_{\ell \geq 0} \sum_{|m| \leq \ell}a_{\ell , m }r^{\ell+1}\{( \ell+1)Y_{\ell , m}(\eta)\eta -\nabla_\sigma Y_{\ell , m}(\eta)\}  \\ & =\sum_{\ell \geq 0} \sum_{|m| \leq \ell}a_{\ell , m }r^{\ell+1}\mathcal{L}_{+}(Y_{\ell , m})(\eta),
\end{align*}
and the result follows in this case, noting that $\lim_{r\rightarrow 1^-}\bm{u}(r\eta)= \mathcal{L}_+ g(\eta).$

The proof  for $g\in W^{1,p}(\S)$ follows as the proof of part b) of Theorem \ref{espacio -1}. 

\textit{c)}  follows directly from \textit{a)} and \textit{b)}.
\end{proof}

The following result follows at once from  Theorems \ref{espacio -1},\ref{espacio 0} and \ref{espacio +1}.

 \begin{corollary}
  Let $1<p<\infty$. Then  $\bm{u}$ belongs to the harmonic vector-valued Hardy space 
   $\bm{h}^p(\B) $ if and only if  for some $g, \widetilde{g}, h \in W^{1,p}(\mathbb{S}) $
   \begin{equation*}
     \bm{u}(x)   =\nabla (Pg)(x)+x \times \nabla (P\widetilde{g})(x)    +[ 2 x \cdot \nabla \!(Ph)(x) + Ph(x)] x- |x|^2 \nabla (Ph)(x).   
   \end{equation*}

 \end{corollary}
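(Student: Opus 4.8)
The plan is to reduce the corollary to the $\bm{L}^p$-decomposition of the boundary datum together with parts b) of the three preceding theorems, so the proof is essentially an assembly. First I would record the vector-valued harmonic Poisson representation: since $\bm{h}^p(\B)$ consists of $\R^3$-valued harmonic functions with uniformly bounded $\bm{L}^p$ norms on the spheres $|x|=r$, the scalar harmonic Hardy space theory applied componentwise gives that $\bm{u}\in\bm{h}^p(\B)$ if and only if $\bm{u}=P\bm{f}$ for some $\bm{f}\in\bm{L}^p(\S)$. This is exactly Theorem \ref{Clasico} specialized to the admissible degenerate choice $\lambda=-\mu$, for which $\beta=\frac{\lambda+\mu}{2\lambda+6\mu}=0$, so that $\mathcal{L}\equiv 0$ and $\bm{P}_e=P$.

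Next I would decompose the boundary field. By Theorem \ref{desc Lp} write $\bm{f}=\bm{f}_++\bm{f}_-+\bm{f}_0$ with $\bm{f}_\#=\bm{\pi}_\#\bm{f}\in\bm{L}_\#^p(\S)$ for $\#\in\{+,-,0\}$. Using the surjectivity statements of Theorems \ref{espacio -1}a), \ref{espacio 0}a) and \ref{espacio +1}a), I obtain $g,\widetilde{g},h\in W^{1,p}(\S)$ with $\bm{f}_-=\mathcal{L}_-g$, $\bm{f}_0=\mathcal{L}_0\widetilde{g}$ and $\bm{f}_+=\mathcal{L}_+h$. Applying $P$, using linearity, and invoking parts b) of the same three theorems, namely $P(\mathcal{L}_-g)(x)=\nabla(Pg)(x)$, $P(\mathcal{L}_0\widetilde{g})(x)=x\times\nabla(P\widetilde{g})(x)$ and $P(\mathcal{L}_+h)(x)=[2x\cdot\nabla(Ph)(x)+Ph(x)]x-|x|^2\nabla(Ph)(x)$, yields precisely the claimed expression for $\bm{u}=P\bm{f}$.

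For the converse I would argue in reverse: given $g,\widetilde{g},h\in W^{1,p}(\S)$, each of the three displayed terms is respectively $P(\mathcal{L}_-g)$, $P(\mathcal{L}_0\widetilde{g})$ and $P(\mathcal{L}_+h)$ by the identities just cited, and since $\mathcal{L}_-g\in\bm{L}_-^p(\S)$, $\mathcal{L}_0\widetilde{g}\in\bm{L}_0^p(\S)$ and $\mathcal{L}_+h\in\bm{L}_+^p(\S)$ by parts a), their sum $\bm{f}$ lies in $\bm{L}^p(\S)$; hence $\bm{u}=P\bm{f}\in\bm{h}^p(\B)$. The only point that is not purely formal is the initial identification $\bm{h}^p(\B)=\{P\bm{f}:\bm{f}\in\bm{L}^p(\S)\}$, which I expect to be the step to pin down with care; however, it is the direct vector-valued analogue of the classical scalar harmonic Hardy space result and is recovered from Theorem \ref{Clasico} at $\lambda=-\mu$, so no new work is required. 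Everything else is an immediate consequence of Theorems \ref{desc Lp}, \ref{espacio -1}, \ref{espacio 0} and \ref{espacio +1}.
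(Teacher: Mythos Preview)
Your proposal is correct and follows exactly the route the paper intends: the paper simply records that the corollary ``follows at once from Theorems \ref{espacio -1}, \ref{espacio 0} and \ref{espacio +1}'', and your argument spells out precisely those details, together with the needed input from Theorem \ref{desc Lp} and the harmonic Poisson representation (which, as you observe, is Theorem \ref{Clasico} at $\lambda=-\mu$).
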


\section{The spaces $\bm{h}_{+}^p(\B)$, $\bm{h}_{-}^p(\B)$ and $\bm{h}_{0}^p(\B)$}
In this section we study the elastic extensions of the vector $L^p$ spaces defined in section 5.

In \cite[lemma 10.3.6]{Freeden} it is proved the following
\begin{lemma}\label{easyP_e}
The solutions to the Dirichlet problem 
\begin{align}
\dest \bm{u}&=0 \ \ \text{ in }\mathbb{B},  \nonumber \\
\bm{u}&=\bm{f} \ \  \text{ in }\mathbb{S}, \nonumber 
\end{align}
are given as follows: 
\begin{itemize}
   \item[a)] If $\bm{f}=\bm{E}_{\ell,m}^{+}$ then $$\bm{u}_{\ell,m}^{+}(x)=(2\ell+1) \big{(}Y_{\ell,m}(x)x+\alpha_{\ell}(|x|^2-1)\nabla Y_{\ell,m}(x)\big{)}- \nabla Y_{\ell,m}(x), $$
   where $ \alpha_{\ell}=-\frac{(\ell+3)\tau+2}{2(\ell(\tau+2)+1)} $ and $\tau=\frac{\lambda + \mu}{\mu}.$ 
  \item[b)] If $\bm{f}=\bm{E}_{\ell,m}^{-}$ then $\bm{u}_{\ell,m}^{-}(x)=\nabla  Y_{\ell,m}(x). $
  \item[c)] If $\bm{f}=\bm{E}_{\ell,m}^{0}$ then $\bm{u}_{\ell,m}^{0}(x)=x\times \nabla Y_{\ell,m}(x). $
\end{itemize}
\end{lemma}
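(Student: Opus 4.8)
The plan is to check directly that each exhibited field solves $\Delta^*\bm{u}=0$ in $\B$ and restricts to the prescribed datum on $\S$; uniqueness for the Dirichlet problem is then furnished by the strong ellipticity of $\Delta^*$ (guaranteed by $\mu>0$ and $2\mu+\lambda>0$), so that the displayed fields are indeed \emph{the} solutions.

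Parts b) and c) I would deduce from the Riesz-field discussion preceding Section 5. Since the harmonic extension $Y_{\ell,m}(x)=r^\ell Y_{\ell,m}(x')$ is harmonic, its gradient $\nabla Y_{\ell,m}$ is a Riesz field, whence $\Delta^*(\nabla Y_{\ell,m})=0$ for every admissible pair $\lambda,\mu$; and since $x\times\bm{v}$ is harmonic and divergence-free whenever $\bm{v}$ is a Riesz field, the field $x\times\nabla Y_{\ell,m}$ also solves the Lam\'e equation. The boundary values are read off from \eqref{grad arm esf} and \eqref{restriccion} at $|x|=1$: the former gives $\nabla Y_{\ell,m}(x)|_{x=x'}=\ell\bm{Y}_{\ell,m}^\vee(x')+\nabla_\sigma Y_{\ell,m}(x')=\bm{E}_{\ell,m}^{-}$, and the latter gives $x\times\nabla Y_{\ell,m}(x)|_{x=x'}=x'\times\nabla_\sigma Y_{\ell,m}(x')=\bm{E}_{\ell,m}^{0}$.

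The substantive case is a). Writing $Y=Y_{\ell,m}$ for the harmonic extension, I would split
$$\bm{u}_{\ell,m}^{+}=(2\ell+1)\bigl(Yx+\alpha_\ell(|x|^2-1)\nabla Y\bigr)-\nabla Y,$$
and note that $-\nabla Y$ is a Riesz field, so $\Delta^*(-\nabla Y)=0$. It then remains to compute $\Delta^*(Yx)$ and $\Delta^*((|x|^2-1)\nabla Y)$. Using $\Delta Y=0$, the homogeneities of $Y$ (degree $\ell$) and of $\nabla Y$ (degree $\ell-1$), Euler's relation $x\cdot\nabla Y=\ell Y$ and $\Delta|x|^2=6$, one obtains $\Delta(Yx)=2\nabla Y$, $\operatorname{div}(Yx)=(\ell+3)Y$, $\Delta((|x|^2-1)\nabla Y)=2(2\ell+1)\nabla Y$ and $\operatorname{div}((|x|^2-1)\nabla Y)=2\ell Y$. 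Consequently both $\Delta^*(Yx)$ and $\Delta^*((|x|^2-1)\nabla Y)$ are scalar multiples of the single field $\nabla Y$, equal to $[2\mu+(\lambda+\mu)(\ell+3)]\nabla Y$ and $[2\mu(2\ell+1)+2\ell(\lambda+\mu)]\nabla Y$ respectively.

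The decisive structural observation — and the only place where the free parameter is pinned down — is that, these two contributions being parallel, the equation $\Delta^*\bm{u}_{\ell,m}^{+}=0$ collapses to the single scalar identity
$$2\mu+(\lambda+\mu)(\ell+3)+\alpha_\ell\bigl[2\mu(2\ell+1)+2\ell(\lambda+\mu)\bigr]=0;$$
solving for $\alpha_\ell$ and dividing numerator and denominator by $\mu$ gives precisely $\alpha_\ell=-\frac{(\ell+3)\tau+2}{2(\ell(\tau+2)+1)}$ with $\tau=(\lambda+\mu)/\mu$. Finally, on $\S$ one has $|x|^2=1$, so the term $(|x|^2-1)\nabla Y$ vanishes and $\nabla Y=|x|^2\nabla Y$ there; hence by \eqref{restriccion} the boundary value equals $(2\ell+1)Y(x)x-|x|^2\nabla Y(x)\big|_{x=x'}=(\ell+1)\bm{Y}_{\ell,m}^\vee(x')-\nabla_\sigma Y_{\ell,m}(x')=\bm{E}_{\ell,m}^{+}$. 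The main obstacle is purely the bookkeeping of homogeneity degrees in these Laplacian and divergence computations; once one sees that both non-Riesz pieces are proportional to $\nabla Y$, the value of $\alpha_\ell$ is forced.
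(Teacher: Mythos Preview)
Your proof is correct. The paper itself does not prove this lemma at all: it simply cites \cite[Lemma 10.3.6]{Freeden} and states the result. Your direct verification --- checking the interior equation via the Riesz-field observations for b) and c), and via the homogeneity computations for a), together with the boundary identifications from \eqref{grad arm esf} and \eqref{restriccion} --- is a self-contained argument that the paper does not supply. The computations you outline (in particular $\Delta(Yx)=2\nabla Y$, $\operatorname{div}(Yx)=(\ell+3)Y$, $\Delta((|x|^2-1)\nabla Y)=2(2\ell+1)\nabla Y$, $\operatorname{div}((|x|^2-1)\nabla Y)=2\ell Y$) are all correct, and the resulting scalar equation for $\alpha_\ell$ is exactly the one forced by the structure; your appeal to strong ellipticity for uniqueness is also appropriate. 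So rather than differing in method, your proposal fills in what the paper defers to an external reference.
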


We will denote by $\bm{h}_{+}^p(\B)$, $\bm{h}_{-}^p(\B)$ and $\bm{h}_{0}^p(\B)$ the image of $ \bm{L}^p_+(\mathbb{S})$, $\bm{L}^p_-(\mathbb{S})$ and  $ \bm{L}^p_0(\mathbb{S})$ respectively, under the elastic Poisson transform $\bm{P}_e$. Notice that the space 
$\bm{h}_{-}^p(\B)$  is the Hardy space of Riesz fields. The space  $\bm{L}^2_-(\mathbb{S})$ of boundary values of $\bm{h}_{-}^2(\B)$  was characterized in \cite{KG} as a subspace of  $\bm{L}^2(\mathbb{S})$ with reproducing kernel.  By (\ref{LpDecom}) it follows that for $1< p<\infty$ 
$$\bm{h}_e^p(\B)=\bm{h}_{+}^p(\B)\oplus \bm{h}_{-}^p(\B)\oplus \bm{h}_{0}^p(\B).$$

The following is a restatement of Theorems \ref{espacio -1} and \ref{espacio 0}:
\begin{theorem}\label{Rieszfields and cross}
\begin{itemize}
 \item[1)]
Let $1<p<\infty$. Then $ \bm{u} \in  \bm{h}_{-}^p(\B)$  if and only if there exist $ g  \in W^{1,p}(\mathbb{S})$ such that $\bm{u}(x)=\nabla (Pg) (x)=P(\mathcal{L}_- g)(x)$. 
\item[2)] Let $1<p<\infty$. Then $\bm{u}\in \bm{h}_{0}^p(\B)$ if and only if there exists $g\in W^{1,p}(\S)$
 such that $\bm{u}(x)=x\times \nabla (Pg) (x) =P(\mathcal{L}_0g)(x)$. 
 \end{itemize}
 \end{theorem}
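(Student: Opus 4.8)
The plan is to show that Theorem~\ref{Rieszfields and cross} is essentially a translation, via the elastic Poisson transform, of the mapping statements already proved in Theorems~\ref{espacio -1} and~\ref{espacio 0}. The starting observation is the identity \eqref{P_e=P}: the fields in question are harmonic and divergence-compatible, so their elastic extension coincides with their harmonic extension, i.e.\ $\bm{P}_e=P$ on these subspaces. Concretely, for $\bm{f}=\bm{E}_{\ell,m}^{-}$ Lemma~\ref{easyP_e}b) gives $\bm{u}_{\ell,m}^{-}(x)=\nabla Y_{\ell,m}(x)$ and for $\bm{f}=\bm{E}_{\ell,m}^{0}$ Lemma~\ref{easyP_e}c) gives $x\times\nabla Y_{\ell,m}(x)$; in both cases the extension is independent of $\lambda,\mu$, exactly as \eqref{P_e=P} predicts. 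So the first step is to record that on $\bm{L}_-^p(\S)$ and $\bm{L}_0^p(\S)$ the elastic Poisson operator reduces to $P$.

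For part 1), I would argue as follows. Given $\bm{u}\in\bm{h}_-^p(\B)$, by definition $\bm{u}=\bm{P}_e\bm{f}$ for some $\bm{f}\in\bm{L}_-^p(\S)$. By Theorem~\ref{espacio -1}a) the operator $\mathcal{L}_-$ maps $W^{1,p}(\S)$ onto $\bm{L}_-^p(\S)$, so there is $g\in W^{1,p}(\S)$ with $\bm{f}=\mathcal{L}_-g$. Then Theorem~\ref{espacio -1}b), namely \eqref{PL}, gives $P(\mathcal{L}_-g)(x)=\nabla(Pg)(x)$, and combining with the reduction $\bm{P}_e\bm{f}=P\bm{f}$ yields $\bm{u}(x)=P(\mathcal{L}_-g)(x)=\nabla(Pg)(x)$. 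The converse is immediate: if $g\in W^{1,p}(\S)$ then $\mathcal{L}_-g\in\bm{L}_-^p(\S)$ by Theorem~\ref{espacio -1}a), so $\nabla(Pg)=P(\mathcal{L}_-g)=\bm{P}_e(\mathcal{L}_-g)$ lies in $\bm{h}_-^p(\B)$. Part 2) is entirely parallel, invoking Theorem~\ref{espacio 0}a) for surjectivity of $\mathcal{L}_0$ and \eqref{PL_0} for the identity $P(\mathcal{L}_0g)(x)=x\times\nabla(Pg)(x)$.

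The one point deserving care—the main obstacle, though a mild one—is justifying that $\bm{P}_e$ genuinely collapses to $P$ on these two subspaces, so that the harmonic-extension formulas from Section~5 may be imported verbatim. This rests on the fact, noted in the Riesz-field discussion preceding Section~5, that a harmonic field $\bm{u}$ with $\nabla\operatorname{div}\bm{u}=0$ solves $\dest\bm{u}=0$ for every admissible $\lambda,\mu$, whence its boundary field has $\lambda,\mu$-independent elastic extension equal to $P\bm{f}$. The fields $\nabla(Pg)$ are Riesz fields (so $\operatorname{div}=0$, $\nabla\times=0$, a fortiori $\nabla\operatorname{div}=0$), and the fields $x\times\nabla(Pg)$ are harmonic and divergence-free by the computation recorded just before \eqref{P_e=P}; in both cases $\nabla\operatorname{div}$ vanishes, so \eqref{P_e=P} applies. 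Once this reduction is in place, the theorem is just the concatenation of the cited results, and no new estimate is required. I would therefore present the proof as a short deduction, flagging \eqref{P_e=P} and Lemma~\ref{easyP_e} as the bridge between the elastic and harmonic pictures.
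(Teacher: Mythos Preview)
Your proposal is correct and matches the paper's approach: the paper records the theorem as ``a restatement of Theorems~\ref{espacio -1} and~\ref{espacio 0}'' with no further argument, and you have supplied exactly the missing link the reader is expected to see, namely that $\bm{P}_e=P$ on $\bm{L}_-^p(\S)$ and $\bm{L}_0^p(\S)$ via \eqref{P_e=P} (or equivalently Lemma~\ref{easyP_e}b),c)). Your identification of this bridge as the one point requiring care is apt, and your justification through the Riesz-field discussion preceding \eqref{P_e=P} is precisely how the paper intends it to go.
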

 \begin{corollary}
Let $1<p<\infty$. Then $ \bm{h}_{-}^p(\B)$ is isomorphic to $\bm{h}_{0}^p(\B).$
\end{corollary}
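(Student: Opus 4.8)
The plan is to exhibit an explicit isomorphism between $\bm{h}_{-}^p(\B)$ and $\bm{h}_{0}^p(\B)$ by routing through the common ``coordinate'' space $W^{1,p}(\S)$. By part 1) of Theorem~\ref{Rieszfields and cross}, every $\bm{u}\in\bm{h}_{-}^p(\B)$ is of the form $\nabla(Pg)=P(\mathcal{L}_-g)$ for some $g\in W^{1,p}(\S)$, and by part 2) every element of $\bm{h}_{0}^p(\B)$ is $P(\mathcal{L}_0g)$ for some $g\in W^{1,p}(\S)$. The natural candidate map $T:\bm{h}_{-}^p(\B)\to\bm{h}_{0}^p(\B)$ is therefore $T\bigl(P(\mathcal{L}_-g)\bigr)=P(\mathcal{L}_0g)$, i.e. the map that replaces the Riesz field $\nabla(Pg)$ by the cross-product field $x\times\nabla(Pg)$ sharing the same scalar potential $g$.

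First I would establish that $T$ is well defined. By Theorem~\ref{espacio -1}a), $\mathcal{L}_-:W^{1,p}(\S)\to\bm{L}_-^p(\S)$ is continuous, onto, with kernel exactly the constants; hence $\mathcal{L}_-$ induces a Banach-space isomorphism $\overline{\mathcal{L}}_-:W^{1,p}(\S)/\mathbb{R}\to\bm{L}_-^p(\S)$. By Theorem~\ref{espacio 0}a) the same is true for $\mathcal{L}_0$, giving $\overline{\mathcal{L}}_0:W^{1,p}(\S)/\mathbb{R}\to\bm{L}_0^p(\S)$. Composing, $\overline{\mathcal{L}}_0\circ\overline{\mathcal{L}}_-^{-1}:\bm{L}_-^p(\S)\to\bm{L}_0^p(\S)$ is an isomorphism of the boundary spaces, and it is well defined precisely because both kernels are the same (the constants), so the choice of potential $g$ above a given boundary field matters only up to a constant, which both operators annihilate. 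Finally, since by the Remark after Theorem~\ref{Clasico} the elastic Poisson transform $\bm{P}_e$ is a Banach isomorphism of $\bm{L}^p(\S)$ onto $\bm{h}_e^p(\B)$ for $1<p<\infty$, it restricts to isomorphisms $\bm{L}_-^p(\S)\cong\bm{h}_-^p(\B)$ and $\bm{L}_0^p(\S)\cong\bm{h}_0^p(\B)$; transporting $\overline{\mathcal{L}}_0\circ\overline{\mathcal{L}}_-^{-1}$ through $\bm{P}_e$ yields $T$, which is then an isomorphism as a composition of isomorphisms.

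The one point requiring care, and the main obstacle, is the handling of the constant functions in the kernels, since on the sphere the interior Poisson extensions of $\mathcal{L}_-$ and $\mathcal{L}_0$ are $\nabla(Pg)$ and $x\times\nabla(Pg)$, both of which manifestly kill the $\ell=0$ term of $g$. I would make this explicit by noting that $\bm{E}_{0,m}^-=\bm{E}_{0,m}^0=0$ (recorded right after \eqref{E_ell^+} as $\bm{E}^-_0(\S)=\bm{E}^0_0(\S)=\{0\}$), so both target spaces are built only from $\ell\geq 1$ and the quotient by constants is genuinely the right domain for both operators. This guarantees the composed map is a bijection rather than merely onto. Everything else is bounded linear bookkeeping, and the norm equivalences $\|\mathcal{L}_-g\|_{\bm{L}^p}\sim\|g\|_{W^{1,p}}$ (on the quotient) and $\|\mathcal{L}_0g\|_{\bm{L}^p}\sim\|g\|_{W^{1,p}}$, both furnished inside the proofs of Theorems~\ref{espacio -1} and \ref{espacio 0} via Bakry's inequality \eqref{bakry} and the Strichartz multiplier theorem, immediately give two-sided bounds $\|T\bm{u}\|_{\bm{h}_e^p}\sim\|\bm{u}\|_{\bm{h}_e^p}$, completing the identification $\bm{h}_{-}^p(\B)\cong\bm{h}_{0}^p(\B)$.
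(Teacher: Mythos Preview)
Your argument is correct and is exactly the intended one: the paper states this corollary without proof immediately after Theorem~\ref{Rieszfields and cross}, and your construction---factoring both $\mathcal{L}_-$ and $\mathcal{L}_0$ through $W^{1,p}(\S)/\mathbb{R}$ using Theorems~\ref{espacio -1}a) and \ref{espacio 0}a), then transporting via $\bm{P}_e$---is precisely the unpacking that the paper leaves to the reader. The only implicit step you rely on, that the induced maps $\overline{\mathcal{L}}_-$ and $\overline{\mathcal{L}}_0$ are open (hence isomorphisms), follows either from the open mapping theorem or, as you note, directly from the two-sided norm estimates already established inside those proofs.
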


\begin{theorem}\label{harmonic in h+}
Let $1<p<\infty$,
 $\bm{u}\in \bm{h}_{+}^p(\B)$ with $\bm{u}$ harmonic if and only if $\bm{u}(x)=cx$ for some constant $c$.
 \end{theorem}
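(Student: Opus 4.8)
The plan is to represent $\bm{u}$ through the elastic Poisson transform, exploit harmonicity to collapse $\bm{P}_e$ to the ordinary harmonic Poisson extension, and then read off the boundary expansion. First I would write $\bm{u}=\bm{P}_e\bm{f}$ with $\bm{f}\in\bm{L}^p_+(\mathbb{S})$, which is the definition of $\bm{h}^p_+(\B)$. Since $\Delta^*\bm{u}=0$ and $\bm{u}$ is harmonic, the splitting of the Lamé operator gives $(\lambda+\mu)\nabla\operatorname{div}\bm{u}=\Delta^*\bm{u}-\mu\Delta\bm{u}=0$. In the genuinely elastic regime $\lambda+\mu\neq 0$ this forces $\nabla\operatorname{div}\bm{u}=0$ (note that if $\lambda=-\mu$ the operator is the vector Laplacian and \emph{every} element of $\bm{h}^p_+(\B)$ is harmonic, so the statement is meant precisely for $\lambda+\mu\neq 0$). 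Thus $\bm{u}$ is a harmonic field with constant divergence, hence a solution of $\Delta^*\bm{u}=0$ for every admissible pair $\lambda,\mu$; by the Fatou theorem (Theorem \ref{Fatou}) its nontangential boundary value is still $\bm{f}$, and by \eqref{P_e=P} we conclude $\bm{u}=P\bm{f}$, the harmonic Poisson extension.

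Next I would expand $\bm{f}=\sum_{\ell\geq 0}\sum_{|m|\leq\ell}a_{\ell,m}\bm{E}^{+}_{\ell,m}$ in $\bm{L}^p(\mathbb{S})$ (Theorem \ref{descom orto} and \eqref{LpDecom}). By \eqref{restriccion} the harmonic extension of each basis field is the homogeneous harmonic polynomial of degree $\ell+1$
\begin{equation*}
P(\bm{E}^{+}_{\ell,m})(x)=(2\ell+1)Y_{\ell,m}(x)\,x-|x|^2\nabla Y_{\ell,m}(x),
\end{equation*}
so $\bm{u}=\sum_{\ell,m}a_{\ell,m}P(\bm{E}^{+}_{\ell,m})$ converges together with all its derivatives uniformly on compact subsets of $\B$. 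The key computation is the divergence: using $x\cdot\nabla Y_{\ell,m}=\ell Y_{\ell,m}$, $\operatorname{div}x=3$ and $\Delta Y_{\ell,m}=0$,
\begin{equation*}
\operatorname{div}P(\bm{E}^{+}_{\ell,m})=\bigl((2\ell+1)(\ell+3)-2\ell\bigr)Y_{\ell,m}=(2\ell+3)(\ell+1)\,Y_{\ell,m}.
\end{equation*}

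Finally, $\operatorname{div}\bm{u}=\sum_{\ell\geq 0}\bigl(\sum_{|m|\leq\ell}a_{\ell,m}(2\ell+3)(\ell+1)Y_{\ell,m}\bigr)$ is the expansion of the harmonic function $\operatorname{div}\bm{u}$ into homogeneous harmonic polynomials, which is unique. Since $\nabla\operatorname{div}\bm{u}=0$ means $\operatorname{div}\bm{u}$ is constant, every term of degree $\ell\geq 1$ must vanish; as $(2\ell+3)(\ell+1)\neq 0$ and the $Y_{\ell,m}$ are linearly independent, this gives $a_{\ell,m}=0$ for all $\ell\geq 1$. Hence $\bm{f}=a_{0,0}\bm{E}^{+}_{0,0}=a_{0,0}Y_{0,0}\,\eta$ is a multiple of the identity field and $\bm{u}=P\bm{f}=cx$. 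The converse is immediate: $\bm{u}(x)=cx$ is harmonic with $\operatorname{div}(cx)=3c$ constant, so it solves $\Delta^*\bm{u}=0$ for all $\lambda,\mu$, and its boundary value $c\eta$ is a multiple of $\bm{E}^{+}_{0,0}\in\bm{L}^p_+(\mathbb{S})$, whence $cx\in\bm{h}^p_+(\B)$. The main obstacle is the reduction $\bm{u}=P\bm{f}$, where one must use $\lambda+\mu\neq 0$ to pass from harmonicity to $\nabla\operatorname{div}\bm{u}=0$ and then invoke the $\lambda,\mu$-independence in \eqref{P_e=P}; the divergence multiplier $(2\ell+3)(\ell+1)$ and its non-vanishing are then routine. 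Alternatively one may work directly with the elastic extensions $\bm{u}^{+}_{\ell,m}$ of Lemma \ref{easyP_e}, computing $\Delta\bm{u}^{+}_{\ell,m}=2(2\ell+1)\bigl(1+\alpha_\ell(2\ell+1)\bigr)\nabla Y_{\ell,m}$ and checking that the scalar factor vanishes only when $\tau=0$ (i.e. $\lambda=-\mu$) or $\ell=0$.
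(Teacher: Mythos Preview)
Your argument is correct and follows essentially the same route as the paper: both deduce $\nabla\operatorname{div}\bm{u}=0$ from $\Delta^*\bm{u}=\Delta\bm{u}=0$, represent $\bm{u}$ as the harmonic Poisson extension of its boundary datum, compute the divergence term-by-term (your coefficient $(2\ell+3)(\ell+1)$ is exactly the paper's $2\ell^2+5\ell+3$), and use constancy of $\operatorname{div}\bm{u}$ to kill the $\ell\geq 1$ terms. The only cosmetic difference is that the paper parametrizes the boundary value as $\mathcal{L}_+g$ for $g\in W^{1,p}(\mathbb{S})$ via Theorem~\ref{espacio +1} rather than expanding directly in the $\bm{E}^{+}_{\ell,m}$, and you are more explicit than the paper about the caveat $\lambda+\mu\neq 0$.
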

 
  \begin{proof} 
  Let $\bm{u}\in \bm{h}_{+}^p(\B)$ with $\bm{u}$ harmonic. Then there is $g \in W^{1,p}(\S)$ be such that $\bm{u}=P_e(\mathcal{L}_+g)=P(\mathcal{L}_+g)$. Now by Theorem \ref{espacio +1} we have that 
  $$\bm{u}(x)=[ 2 x \cdot \nabla (Pg)(x) + Pg(x)] x- |x|^2 \nabla (Pg)(x).$$
  It follows that 
  \begin{equation}\label{div-u}
  div\,\bm{u}(x)=3Pg(x)+ 5 x \cdot \nabla (Pg)(x)+2x\cdot \nabla(x\cdot \nabla (Pg)(x)).
  \end{equation}
  Let 
  $g=\sum_{\ell \geq 0} \sum_{|m| \leq \ell} a_{\ell , m} Y_{\ell, m}$ in $\dist$, so that  
  
$$ Pg(x)=\sum_{\ell \geq 0} \sum_{|m| \leq \ell} a_{\ell , m} Y_{\ell, m}(x),\;\; x \in \B.$$

It is easy to see from (\ref{div-u}) that
\begin{equation}\label{div-eq}
div\,\bm{u}(x)=\sum_{\ell \geq 0} \sum_{|m| \leq \ell} (2 \ell^2+5\ell+3)a_{\ell , m} Y_{\ell, m}(x).
\end{equation}

Now since $\Delta^*\bm{u}=\Delta\bm{u}= 0 $, we have that $\nabla div\,\bm{u}(x)=0 $, namely $div\,\bm{u}(x)$ is constant. From (\ref{div-eq})
we can conclude that $a_{\ell , m} =0$ for $\ell>0$ and then that $g=c$. This is that $\bm{u}(x)=cx$.
The sufficient condition is clear.
  \end{proof}

  \begin{corollary}
  Let $1<p<\infty$,
      $\bm{u}\in \bm{h}_{e}^p(\B)$  with $\bm{u}$ harmonic if and only if
      $$\bm{u}(x)=\nabla (Pg)(x)+x \times \nabla (P\widetilde{g})(x)+cx,$$
      for some $g, \widetilde{g} \in W^{1,p}(\mathbb{S}) $ and $c$ constant. Moreover, these are the unique solutions of the 
      Lam\'e system belonging to $\bm{h}_{e}^p(\B)$
for all $\lambda$ and $\mu$.
  \end{corollary}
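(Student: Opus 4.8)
The plan is to assemble the corollary from the direct sum decomposition $\bm{h}_e^p(\B)=\bm{h}_+^p(\B)\oplus\bm{h}_-^p(\B)\oplus\bm{h}_0^p(\B)$ together with Theorems \ref{Rieszfields and cross} and \ref{harmonic in h+}. First I would take a harmonic $\bm{u}\in\bm{h}_e^p(\B)$ and write $\bm{u}=\bm{u}_++\bm{u}_-+\bm{u}_0$ with $\bm{u}_+\in\bm{h}_+^p(\B)$, $\bm{u}_-\in\bm{h}_-^p(\B)$ and $\bm{u}_0\in\bm{h}_0^p(\B)$. By Theorem \ref{Rieszfields and cross} there are $g,\widetilde{g}\in W^{1,p}(\S)$ with $\bm{u}_-(x)=\nabla(Pg)(x)$ and $\bm{u}_0(x)=x\times\nabla(P\widetilde{g})(x)$. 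Now $\bm{u}_-$ is the gradient of a harmonic function, hence harmonic, and $\bm{u}_0$ is the cross product of $x$ with a Riesz field, which is harmonic by the elementary calculation recorded in the discussion of Riesz fields above. Consequently $\bm{u}_+=\bm{u}-\bm{u}_--\bm{u}_0$ is a harmonic element of $\bm{h}_+^p(\B)$, and Theorem \ref{harmonic in h+} forces $\bm{u}_+(x)=cx$ for a constant $c$, giving the asserted representation. The converse is immediate: each of the three summands is harmonic and lies in $\bm{h}_e^p(\B)$, so any field of this form is a harmonic element of $\bm{h}_e^p(\B)$.

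For the final assertion I would first record the reduction that a field $\bm{u}$ solves $\Delta^*\bm{u}=0$ for every admissible $(\lambda,\mu)$ if and only if $\Delta\bm{u}=0$ and $\nabla div\,\bm{u}=0$. Writing $\Delta^*\bm{u}=\mu\Delta\bm{u}+(\lambda+\mu)\nabla div\,\bm{u}$, the admissible choice $\lambda=-\mu$ (legitimate since then $2\mu+\lambda=\mu>0$) yields $\Delta\bm{u}=0$, after which any choice with $\lambda+\mu\neq 0$ yields $\nabla div\,\bm{u}=0$; the reverse implication is obvious. In particular such a $\bm{u}$ is harmonic, so if it belongs to $\bm{h}_e^p(\B)$ it already admits the representation obtained in the first paragraph.

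It then remains to verify that every field of the stated form satisfies $\nabla div\,\bm{u}=0$, so that the representation captures exactly the fields solving the Lam\'e system for all $\lambda,\mu$. For the first term, $div\,\nabla(Pg)=\Delta(Pg)=0$; for the second, the divergence of $x\times\nabla(P\widetilde{g})$ vanishes as noted in the discussion of Riesz fields; and $div(cx)=3c$ is constant, so its gradient is zero. Summing the three, $\nabla div\,\bm{u}=0$, and since these fields are harmonic they solve $\Delta^*\bm{u}=0$ for all $\lambda,\mu$, which together with the previous paragraph identifies them as precisely those solutions. I do not anticipate a genuine obstacle: the corollary is essentially a bookkeeping consequence of the three preceding theorems, and the only point requiring slight care is the equivalence ``solution for all $(\lambda,\mu)$'' $\Longleftrightarrow$ ``$\Delta\bm{u}=0$ and $\nabla div\,\bm{u}=0$'', which rests on the independence of the operators $\Delta$ and $\nabla div$ as the Lam\'e coefficients vary.
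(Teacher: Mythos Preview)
Your proposal is correct and follows exactly the route the paper intends: the paper's own proof consists of the single sentence ``It follows at once from Theorems \ref{Rieszfields and cross} and \ref{harmonic in h+},'' and your argument simply unpacks this by using the direct sum decomposition, observing that the $\bm{h}_-^p$ and $\bm{h}_0^p$ pieces are automatically harmonic, and invoking Theorem \ref{harmonic in h+} on the remaining $\bm{h}_+^p$ piece. Your treatment of the ``moreover'' clause (via the equivalence $\Delta^*\bm{u}=0$ for all admissible $(\lambda,\mu)$ $\Leftrightarrow$ $\Delta\bm{u}=0$ and $\nabla\,div\,\bm{u}=0$, checked term by term) is more explicit than the paper's, but entirely in the spirit of the discussion of Riesz fields in Section \ref{section_3}.
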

  
   \begin{proof}
   It follows at once from Theorems \ref{Rieszfields and cross} and \ref{harmonic in h+}.
       
   \end{proof} 
  Now we will characterize the vector fields in $\bm{h}_{+}^p(\B)$. To motive the next Theorem recall from Lemma \ref{easyP_e} (and the notation there)
  that
  $$\bm{P}_e(\bm{E}_{\ell,m}^{+})(x)=(2\ell+1) \big{(}Y_{\ell,m}(x)x+\alpha_{\ell}(|x|^2-1)\nabla Y_{\ell,m}(x)\big{)}- \nabla Y_{\ell,m}(x).$$
  From Theorem \ref{descom orto}, this can be written as
  $$ \bm{P}_e(\bm{E}_{\ell,m}^{+})(x)=P(\bm{E}_{\ell,m}^{+})(x)+ 
  (|x|^2-1)\nabla P(\beta_{\ell}Y_{\ell,m})(x).$$
  where $\beta_{\ell}= (2\ell+1)\alpha_{\ell}+1.$ We are going to consider the multiplier $\mathcal{M}_{\beta_{\ell}}. $ 

 \begin{lemma}\label{Bounded}
 For $1<p<\infty$, the multiplier $\mathcal{M}_{\beta_{\ell}}: W^{1,p}(\S) \rightarrow  L^p(\S)$
  is bounded.
  \end{lemma}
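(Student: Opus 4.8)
The plan is to exploit the $W^{1,p}$ structure to absorb the growth of $\beta_\ell$. A glance at $\alpha_\ell=-\frac{(\ell+3)\tau+2}{2(\ell(\tau+2)+1)}$ shows that $\beta_\ell=(2\ell+1)\alpha_\ell+1$ grows linearly in $\ell$, so Strichartz's theorem (Theorem \ref{Strichartz}) cannot be applied to the sequence $\{\beta_\ell\}$ directly on $L^p$. First I would record the algebraic simplification
\[
\beta_\ell=-\frac{\tau(2\ell+3)(\ell+1)}{2((\tau+2)\ell+1)},
\]
obtained by clearing denominators in $(2\ell+1)\alpha_\ell+1$ and factoring $2\ell^2+5\ell+3=(2\ell+3)(\ell+1)$, and note that the ellipticity hypotheses $\mu>0$, $2\mu+\lambda>0$ give $\tau+2=\frac{\lambda+3\mu}{\mu}>0$, so the denominator never vanishes for $\ell\ge 0$.

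Next I would factor the multiplier through $(-\Delta_\sigma)^{1/2}$. For $f=\sum_{\ell,m}a_{\ell,m}Y_{\ell,m}\in W^{1,p}(\S)$, split off the $\ell=0$ term and write
\[
\mathcal{M}_{\beta_\ell}f=\beta_0 a_{0,0}Y_{0,0}+\mathcal{M}_{\gamma_\ell}\big((-\Delta_\sigma)^{1/2}f\big),\qquad \gamma_\ell:=\frac{\beta_\ell}{\sqrt{\ell(\ell+1)}}\ \ (\ell\ge1).
\]
The zero-order term is controlled by $|a_{0,0}|\lesssim\|f\|_{L^p}$, and $\|(-\Delta_\sigma)^{1/2}f\|_{L^p}\le\|f\|_{W^{1,p}}$ by the definition of the Sobolev norm (and Lemma \ref{lema_3}). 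Thus the whole estimate reduces to showing that $\mathcal{M}_{\gamma_\ell}$ is bounded on $L^p(\S)$, which would give $\|\mathcal{M}_{\beta_\ell}f\|_{L^p}\lesssim\|f\|_{W^{1,p}}$.

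To bound $\mathcal{M}_{\gamma_\ell}$ I would produce a symbol meeting the hypotheses of Strichartz's theorem. Setting $\gamma(x)=\beta(x)/\sqrt{x(x+1)}$ with $\beta(x)$ the rational function above, one checks that $\gamma(x)\to-\frac{\tau}{\tau+2}$ as $x\to\infty$ with $\gamma'(x)=O(x^{-2})$, while near the origin $\gamma$ has an $x^{-1/2}$ singularity. Since the multiplier only sees integer values $\ell\ge1$, I would set $h=\chi\,\gamma$, where $\chi$ is a smooth cutoff equal to $1$ on $[1,\infty)$ and vanishing on $(0,\tfrac12]$; then $h(\ell)=\gamma_\ell$ for all $\ell\ge1$, and $h$ is smooth and bounded on $(0,\infty)$. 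A short estimate gives $|h(x)|\le A$ and $|x\,h'(x)|\le A$, that is, condition \eqref{Strichartj} with $N=1$. As $\left|\tfrac12-\tfrac1p\right|<\tfrac12=\tfrac{N}{2}$ for every $p\in(1,\infty)$, Theorem \ref{Strichartz} applies and yields the boundedness of $\mathcal{M}_{\gamma_\ell}$ on $L^p(\S)$.

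The only genuinely delicate points are the two I have isolated: the linear growth of $\beta_\ell$, which is exactly why the domain is $W^{1,p}$ rather than $L^p$ and is absorbed by writing $\beta_\ell=\gamma_\ell\sqrt{\ell(\ell+1)}$; and the $x^{-1/2}$ blow-up of the reduced symbol $\gamma$ at the origin, which would violate the zeroth-order Strichartz bound but is irrelevant since only the integers $\ell\ge1$ enter and can be excised by the cutoff $\chi$. Everything else is a routine verification of the derivative bounds for $h$ on $[1,\infty)$, where $\gamma$ is smooth and converges to a constant.
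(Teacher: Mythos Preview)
Your proposal is correct and is precisely the argument the paper has in mind: the paper's proof consists of the single sentence ``follows directly from Theorem~\ref{Strichartz}'', and you have spelled out how---factoring $\beta_\ell=\gamma_\ell\sqrt{\ell(\ell+1)}$ to absorb the linear growth into the $W^{1,p}$ norm, then applying Strichartz to the bounded symbol $\gamma_\ell$ after excising the irrelevant small-$x$ singularity. There is nothing to add.
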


\begin{proof}
    The proof follows directly from Theorem \ref{Strichartz}.
\end{proof}
  
\begin{theorem}\label{h+}
Let $1<p<\infty$. 
 If $\bm{u}\in \bm{h}_{+}^p(\B)$,  there exist $ g  \in W^{1,p}(\mathbb{S})$ such that 
 \begin{equation}\label{forma +}
     \bm{u}(x)=P (\mathcal{L}_+ g)(x)+(|x|^2-1)\nabla (P(\mathcal{M}_{\beta_{\ell}}g))(x) 
 \end{equation}
 Conversely, any function $\bm{u}$ as in \eqref{forma +} for $g\in W^{1,p}(\mathbb{S}),$ belongs to $\bm{h}_{+}^p(\B)$.
  \end{theorem}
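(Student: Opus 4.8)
The plan is to reduce both implications to the single pointwise identity
\begin{equation*}
\bm{P}_e(\mathcal{L}_+ g)(x)=P(\mathcal{L}_+ g)(x)+(|x|^2-1)\nabla\bigl(P(\mathcal{M}_{\beta_{\ell}}g)\bigr)(x),\qquad x\in\B,
\end{equation*}
valid for every $g\in W^{1,p}(\S)$. Granting this, the forward direction is immediate: since $\mathcal{L}_+$ is an isomorphism of $W^{1,p}(\S)$ onto $\bm{L}_+^p(\S)$ by Theorem \ref{espacio +1}a), any $\bm{u}\in\bm{h}_+^p(\B)=\bm{P}_e(\bm{L}_+^p(\S))$ can be written $\bm{u}=\bm{P}_e(\mathcal{L}_+g)$ for a unique $g\in W^{1,p}(\S)$, and the identity rewrites this as \eqref{forma +}. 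Conversely, if $\bm{u}$ has the form \eqref{forma +} for some $g\in W^{1,p}(\S)$, the identity shows $\bm{u}=\bm{P}_e(\mathcal{L}_+g)$ with $\mathcal{L}_+g\in\bm{L}_+^p(\S)$, whence $\bm{u}\in\bm{h}_+^p(\B)$.

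To establish the identity I would first prove it for smooth $g$. Writing $g=\sum_{\ell,m}a_{\ell,m}Y_{\ell,m}$ with rapidly decaying coefficients, we have $\mathcal{L}_+g=\sum_{\ell,m}a_{\ell,m}\bm{E}_{\ell,m}^{+}$, and applying $\bm{P}_e$ termwise together with the per-mode identity already recorded before the statement,
\begin{equation*}
\bm{P}_e(\bm{E}_{\ell,m}^{+})(x)=P(\bm{E}_{\ell,m}^{+})(x)+(|x|^2-1)\nabla P(\beta_{\ell}Y_{\ell,m})(x),
\end{equation*}
one obtains the two sums $\sum_{\ell,m}a_{\ell,m}P(\bm{E}_{\ell,m}^{+})=P(\mathcal{L}_+g)$ and $(|x|^2-1)\sum_{\ell,m}\beta_{\ell}a_{\ell,m}\nabla P(Y_{\ell,m})=(|x|^2-1)\nabla(P(\mathcal{M}_{\beta_{\ell}}g))$. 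The interchange of $\bm{P}_e$, $P$ and $\nabla$ with the sums is justified exactly as in the proof of Theorem \ref{espacio +1}b), since with the rapid decay of $a_{\ell,m}$ all the series converge uniformly on compact subsets of $\B$.

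The remaining step, passing from smooth $g$ to general $g\in W^{1,p}(\S)$, is the only point requiring care. Choose $g_n\in C^\infty(\S)$ with $g_n\to g$ in $W^{1,p}(\S)$. For each fixed $x\in\B$, the three pointwise evaluation functionals $\bm{f}\mapsto\bm{P}_e\bm{f}(x)$, $\bm{f}\mapsto P\bm{f}(x)$ and $h\mapsto\nabla(Ph)(x)$ are continuous on $\bm{L}^p(\S)$ and $L^p(\S)$ respectively, because the kernels $\mathcal{P}_e(x,\cdot)$, $P(x,\cdot)$ and $\nabla_xP(x,\cdot)$ are bounded for fixed $x\in\B$, by Proposition \ref{estimaciones}a) and the explicit form of $P$. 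Since $\mathcal{L}_+$ is bounded on $W^{1,p}(\S)$ and $\mathcal{M}_{\beta_\ell}\colon W^{1,p}(\S)\to L^p(\S)$ is bounded by Lemma \ref{Bounded}, we have $\mathcal{L}_+g_n\to\mathcal{L}_+g$ in $\bm{L}^p(\S)$ and $\mathcal{M}_{\beta_\ell}g_n\to\mathcal{M}_{\beta_\ell}g$ in $L^p(\S)$; feeding these into the three continuous functionals lets me pass to the limit in the smooth identity and recover it for $g$. The main obstacle is thus purely this approximation and continuity bookkeeping, since the algebraic heart of the matter, the per-mode identity, is already in hand.
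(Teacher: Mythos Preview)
Your proposal is correct and follows essentially the same route as the paper: both reduce the theorem to the per-mode identity $\bm{P}_e(\bm{E}_{\ell,m}^{+})=P(\bm{E}_{\ell,m}^{+})+(|x|^2-1)\nabla P(\beta_{\ell}Y_{\ell,m})$, sum over the expansion of $g$, and invoke Theorem \ref{espacio +1}a) and Lemma \ref{Bounded}. The only cosmetic difference is that the paper manipulates the series directly for an arbitrary $g\in W^{1,p}(\S)$ via its distributional expansion (the $r^\ell$ decay absorbing the polynomial growth of the coefficients), whereas you add an explicit smooth-approximation step; this is a presentational choice, not a different argument.
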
 

   \begin{proof}
   Let $\bm{u}\in \bm{h}_{+}^p(\B)$ by Theorem \ref{espacio +1} a)
   there is $g \in W^{1,p}(\S)$ be such that $\bm{u}(x)=\bm{P}_e(\mathcal{L}_+g)(x).$   
   Let $g=\sum_{\ell \geq 0} \sum_{|m| \leq \ell} a_{\ell , m} Y_{\ell, m}$ in $\dist$, 
   first note that by Lemma \ref{Bounded},  $\mathcal{M}_{\beta_{\ell}}:W^{1,p}(\mathbb{S})\rightarrow L^p(\mathbb{S})$
   is bounded. 
   Then
   \begin{align*}
     \bm{u}(x)&=\bm{P}_e(\mathcal{L}_+g)(x)=\sum_{\ell \geq 0} \sum_{|m| \leq \ell}a_{\ell , m}\bm{P}_e({E}_{\ell,m}^{+})(x)=\\ 
     &=\sum_{\ell \geq 0} \sum_{|m| \leq \ell}a_{\ell , m}P({E}_{\ell,m}^{+})(x)+(|x|^2-1)\sum_{\ell \geq 0} \sum_{|m| \leq \ell}a_{\ell , m}\nabla (P(\beta_{\ell}Y_{\ell,m}))(x)
     \\
       &=P(\mathcal{L}_+g)(x)+(|x|^2-1)\nabla (P(\mathcal{M}_{\beta_{\ell}}g))(x).
   \end{align*}
   The sufficient condition follows by reversing the steps above, to prove that $\bm{u}=\bm{P}_e(\mathcal{L}_+g)(x)$. 
   \end{proof}

 \begin{corollary}
  Let $1<p<\infty$,  $\bm{u}\in \bm{h}_{e}^p(\B)$, if and only if 
  $$\bm{u}(x)=\nabla (Pg)(x)+x \times \nabla (P\widetilde{g})(x)+ P(\mathcal{L}_+h)(x)+(|x|^2-1)\nabla (P(\mathcal{M}_{\beta_{\ell}}h))(x)$$
  for some $g, \widetilde{g}, h \in W^{1,p}(\mathbb{S}). $
 \end{corollary}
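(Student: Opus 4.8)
The plan is to read this off the direct sum decomposition established earlier, applying one structural theorem per summand. First I would invoke the Banach direct sum
$$\bm{h}_e^p(\B)=\bm{h}_{+}^p(\B)\oplus \bm{h}_{-}^p(\B)\oplus \bm{h}_{0}^p(\B),$$
which is the $\bm{P}_e$-image of the $\bm{L}^p$ decomposition of Theorem \ref{desc Lp} and is legitimate because $\bm{P}_e$ is an isomorphism onto $\bm{h}_e^p(\B)$. Thus any $\bm{u}\in\bm{h}_e^p(\B)$ splits uniquely as $\bm{u}=\bm{u}_-+\bm{u}_0+\bm{u}_+$ with $\bm{u}_-\in\bm{h}_-^p(\B)$, $\bm{u}_0\in\bm{h}_0^p(\B)$ and $\bm{u}_+\in\bm{h}_+^p(\B)$, where the boundary data of the three pieces are the projections $\bm{\pi}_-,\bm{\pi}_0,\bm{\pi}_+$ of the boundary data of $\bm{u}$.

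Next I would describe each summand by the theorem attached to its space. Theorem \ref{Rieszfields and cross}(1) supplies $g\in W^{1,p}(\S)$ with $\bm{u}_-(x)=\nabla (Pg)(x)$; Theorem \ref{Rieszfields and cross}(2) supplies $\widetilde{g}\in W^{1,p}(\S)$ with $\bm{u}_0(x)=x\times\nabla (P\widetilde{g})(x)$; and Theorem \ref{h+} supplies $h\in W^{1,p}(\S)$ with $\bm{u}_+(x)=P(\mathcal{L}_+ h)(x)+(|x|^2-1)\nabla (P(\mathcal{M}_{\beta_\ell}h))(x)$. Summing the three expressions yields exactly the claimed formula, which settles the forward implication. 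For the converse I would check that each named building block lies in the corresponding subspace and hence that their sum lies in $\bm{h}_e^p(\B)$: the first two terms are in $\bm{h}_-^p(\B)$ and $\bm{h}_0^p(\B)$ again by Theorem \ref{Rieszfields and cross}, while $P(\mathcal{L}_+ h)(x)+(|x|^2-1)\nabla (P(\mathcal{M}_{\beta_\ell}h))(x)$ lies in $\bm{h}_+^p(\B)$ by the sufficiency half of Theorem \ref{h+}, whose proof reverses the computation to recover $\bm{P}_e(\mathcal{L}_+ h)$. Since $\bm{h}_e^p(\B)$ is closed under addition, the sum belongs to it.

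There is essentially no obstacle here: the corollary is a bookkeeping assembly of Theorems \ref{Rieszfields and cross} and \ref{h+} over the three orthogonal channels. The only point demanding a moment's care is that the representations produced by the three theorems are genuinely the pieces of the same $\bm{u}$ under the decomposition, i.e. that the boundary data of $\bm{u}_-,\bm{u}_0,\bm{u}_+$ are the $\bm{\pi}_-,\bm{\pi}_0,\bm{\pi}_+$ projections of the boundary data of $\bm{u}$; but this is immediate from the definition of $\bm{h}_\#^p(\B)$ as $\bm{P}_e(\bm{L}_\#^p(\S))$ together with the injectivity of $\bm{P}_e$. Consequently the three auxiliary functions $g,\widetilde{g},h$ are determined (up to additive constants, by the kernel description in Theorems \ref{espacio -1}, \ref{espacio 0} and \ref{espacio +1}), and the proof reduces to citing the already-proved theorems.
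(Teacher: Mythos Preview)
Your proposal is correct and matches the paper's intent: the corollary is stated without proof precisely because it is the direct assembly of the decomposition $\bm{h}_e^p(\B)=\bm{h}_{+}^p(\B)\oplus \bm{h}_{-}^p(\B)\oplus \bm{h}_{0}^p(\B)$ with Theorems \ref{Rieszfields and cross} and \ref{h+}, exactly as you outline. Your additional remarks on injectivity of $\bm{P}_e$ and the uniqueness of $g,\widetilde{g},h$ up to constants are accurate and, if anything, make the bookkeeping more explicit than the paper does.
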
  
\section{Appendix}
\subsection{Estimates for $P_e(x,\eta)$}

  \textit{Proof of Proposition \ref{estimaciones}.}


\textit{a):} Let $K$ be a compact in $\B$. There exists $\gamma >0$, depending only on $K$, such that
\begin{equation}\label{constan_compac}
   \gamma \leq  |tx - \eta| \leq 2, \hspace{0.3cm} t \in [0,1], \; \eta \in \S, \; x \in K. 
\end{equation}

From \eqref{ja_poisson_2} and \eqref{ja_poisson_4} we have

\begin{equation*}
4\pi \Phi(x,\eta)=\int_0^1\left(4 \pi P(tx,\eta)-1-3tx\cdot\eta\right)\frac{dt}{t^{1+\alpha}},
  \end{equation*}

  then 
  \begin{equation}\label{acotacion_poisson_e_3}
  \frac{\partial^2 \Phi}{\partial x_i\partial x_j}(x,\eta)=\int_0^1 \partial_i\partial_j P(tx,\eta)t^{1-\alpha}dt.
  \end{equation}

  Now,

  \begin{align}
   4\pi\partial_i P_i (x,\eta)  & = \frac{-2x_i}{\vert x-\eta\vert^3}-3\frac{(1-\vert x\vert^2)(x_i-\eta_i)}{\vert x-\eta\vert^5}, \nonumber \\
    4\pi\partial_j \partial_i P (x,\eta)  & =\frac{-2\delta_{ij}\vert x-\eta\vert^3+6x_i\vert x-\eta\vert(x_j-\eta_j)}{\vert x-\eta\vert^6} \nonumber \\
    & +3\frac{\vert x-\eta\vert^5\left((1-\vert x\vert^2)\delta_{ij}+2x_j(x_i-\eta_i)\right)}{\vert x-\eta\vert^{10}} \nonumber \\
    &+15 \frac{(1-\vert x\vert^2)(x_i-\eta_i)(x_j-\eta_j)\vert x-\eta\vert^3}{\vert x-\eta\vert^{10}}. \label{acotacion_poisson_e_2}
  \end{align}

  By using (\ref{ja_poisson_1})-(\ref{ja_poisson_3}), (\ref{constan_compac}) and (\ref{acotacion_poisson_e_2}) it is easy to see that $ \partial^\alpha_x\mathcal{P}_e(x,\eta)$ is uniformly bounded in $K\times\S$ for any compact
   $K\subset \B$.

\textit{b):}
By (\ref{acotacion_poisson_e_2}) and the fact that
\begin{equation}\label{desigualdad_1}
|tx-\eta| \geq 1-t|x| \geq 1-|x|, \hspace{0.3cm} t \in [0,1], \; \eta \in \S, \; x \in \B,
\end{equation}
we have
\begin{equation}\label{cotaD2Fi}
  \left|    \frac{\partial^2 \Phi}{\partial x_i\partial x_j}(x,\eta)  \right|  \lesssim \int_{0}^1 \frac{t^{1-\alpha}}{|tx-\eta|^4}dt.
\end{equation}
Then
by \eqref{desigualdad_1} and  \eqref{cotaD2Fi}, the kernel $\mathcal{L}$ of $L$ satisfies
\begin{equation}\label{cota L}
\vert \mathcal{L}_{i,j}(x,\eta)\vert\lesssim  (1-\vert x\vert^2)\int_0^1\frac{t^{1-\alpha}P(tx,\eta)}{\vert tx-\eta\vert^2}dt.
\end{equation}
Then $b)$ follows immediately.

  \textit{c)}: Let $\xi\in \S$ and $x\in\Gamma_\xi$. Then 
  \begin{equation}\label{maxL_1}
  \vert \bm{P}_e \bm{f} (x)\vert\leq \vert P \bm{f}(x)\vert+ \vert{\bm L} \bm{f}(x)\vert.
  \end{equation}
  By \eqref{ntmaximal vs maximal},
  \begin{equation}\label{maxarm}
  \vert P\bm{f}(x)    \vert\leq  P|\bm{f}|(x)  \leq  \mathcal{N} P|\bm{f}| (\xi)     \lesssim M[|\bm{f}|] (\xi).
  \end{equation}
 Since $tx\in \Gamma_\eta$ if $x\in \Gamma_\xi$ and $t\in [0,1]$,  then  by \emph{b)}, \eqref{maxarm} and  \eqref{integral} we have


  \begin{align}
      \vert{\bm L} \bm{f}(x)\vert & \lesssim \mathcal{N}P|\bm{f}|(\xi)(1-|x|^2) \int_0^1 \frac{t^{1-\alpha}}{(1-t^2|x|^2)^2} dt \nonumber \\
      & \lesssim M[|\bm{f}|] (\xi). \label{maxL}
  \end{align}

  Then the proof of \textit{(c)} follows from \eqref{maxL_1}, \eqref{maxarm} and \eqref{maxL}.
  \hfill $\square$

\subsection{Projections}

 \textit{Proof of Theorem \ref{desc Lp}.}
 Let $1<p<\infty$. The projections $\bm{\pi}_+$, $\bm{\pi}_-$ and $\bm{\pi}_0$ on $\bm{C}^\infty(\mathbb{S})$ are defined by  
  \begin{align*}
      \bm{\pi}_+ (\bm{f})(\xi) & = \sum_{\ell \geq 0} \sum_{|m| \leq \ell} \langle \bm{f} , \frac{\bm{E}^+_{\ell, m}}{\mu_\ell^+}  \rangle \frac{\bm{E}^+_{\ell, m}(\xi)}{\mu_\ell^+}, \\
      \bm{\pi}_- ( \bm{f})(\xi) & = \sum_{\ell \geq 1} \sum_{|m| \leq \ell} \langle \bm{f} , \frac{ \bm{E}^-_{\ell, m}}{\mu_\ell^-}  \rangle \frac{\bm{E}^-_{\ell, m}(\xi)}{\mu_\ell^-}, \\
      \bm{\pi}_0 (\bm{f})(\xi) & = \sum_{\ell \geq 1} \sum_{|m| \leq \ell} \langle \bm{f} ,\frac{ \bm{E}^0_{\ell, m}}{\mu_\ell^0}  \rangle \frac{\bm{E}^0_{\ell, m}(\xi)}{\mu_\ell^0}. \\
  \end{align*}


{\bf Boundedness of $\bm{\pi}_+$}:

 \vspace{.5 cm}
We have 

\begin{equation}\label{E_+_descompuesto}
 \bm{\pi}_+(\bm{f}) (\xi)= \sum_{i=1}^4 \bm{T}_i(\mathbf{f})(\xi),
\end{equation}
where 
\begin{align*}
  \bm{T}_1(\bm{f}) (\xi)  &= 
  \sum_{\ell \geq 0} \sum_{|m| \leq \ell} \frac{\ell +1}{2 \ell +1} \langle \bm{f} ,  \bm{Y}_{\ell, m}^{\vee} \rangle \bm{Y}_{\ell, m}^{\vee}(\xi),
 \\
 \bm{T}_2(\bm{f}) (\xi) &=- \sum_{\ell \geq 0} \sum_{|m| \leq \ell} \frac{1}{2 \ell +1}\langle \bm{f} ,  \nabla_\sigma  Y_{\ell ,m} \rangle \bm{Y}_{\ell ,m}^{\vee}(\xi), 
 \nonumber
 \\
 \bm{T}_3(\bm{f}) (\xi)&=- \sum_{\ell \geq 0} \sum_{|m| \leq \ell} \frac{1}{2 \ell +1} \langle \bm{f} , \bm{Y}_{\ell, m}^{\vee}  \rangle  \nabla_\sigma  Y_{\ell, m} (\xi),\\
 \bm{T}_4(\bm{f}) (\xi)&=\sum_{\ell \geq 0} \sum_{|m| \leq \ell} \frac{1}{(\ell +1)(2 \ell +1)} \langle \bm{f} ,  \nabla_\sigma  Y_{\ell, m}  \rangle  \nabla_\sigma  Y_{\ell ,m} (\xi).
\end{align*}

 Notice that for $\bm{f}\in \bm{C}^\infty(\mathbb{S})$, we can write
\begin{equation*}
\bm{T}_1(\bm{f})=\mathcal{M}_{\frac{\ell +1}{2 \ell +1}}(\bm{f}\cdot \bm{\nu})^{\vee}.
\end{equation*}
Since by Theorem \ref{Strichartz} the operator  $\mathcal{M}_{\frac{\ell +1}{2 \ell +1}}$ is continuous in $L^p(\S)$, it follows immediately that $\bm{T}_1$ has a bounded extension in $\bm{L}^p-$norm.

Next,  for $\bm{f}\in \bm{C}^\infty(\mathbb{S})$, we can write

$$-\bm{T}_3(\bm{f})(\xi)= 
\sum_{\ell=0}^\infty \sum_{m=-\ell}^\ell \frac{1}{2 \ell +1}  \widehat{(\bm{f}\cdot\bm{\nu})}_{\ell,m}   \mathbf{\nabla}_\sigma  Y_{\ell, m} (\xi), 
  $$

Since $\sum_{\ell=0}^\infty \sum_{m=-\ell}^\ell  \widehat{(\bm{f}\cdot\bm{\nu})}_{\l,m}    Y_{\ell, m} (\xi)$ converges in the Sobolev space $W^{1,p}(\S)$, we have that

$$-\bm{T}_3(\bm{f})(\xi)= \nabla_\sigma \mathcal{M}_{\frac{1}{2 \ell +1}}(\bm{f}\cdot\bm{\nu}).$$
Then, using Theorems \ref{Strichartz} and \ref{theorem_bakry}, 

\begin{equation*}
\Vert\bm{T}_3(\bm{f})\Vert_{\bm{L}^p}   \lesssim\Vert (-\Delta)^{1/2}(\bm{f}\cdot\bm{\nu})\Vert_{L^p}=\Vert\mathcal{M}_{\alpha_\ell}(\bm{f}\cdot\bm{\nu})\Vert_{L^p}\lesssim \Vert \bm{f}\Vert_{\bm{L}^p},
\end{equation*}

where $$\alpha_\ell=\frac{\ell^{1/2} \left( \ell+1\right)^{1/2}}{2\ell+1}.$$
 It also follows that $\bm{T}_2$ is $\bm{L}^p-$bounded since its adjoint is  $\bm{T}_3$, which is $\bm{L}^q-$ bounded for any $1<q<\infty$.

Finally, to study $\bm{T}_4$, we observe that for $\bm{f} \in \bm{C}^\infty (\S)$,

$$ \bm{T}_4 (\bm{f}) =   \nabla_\sigma \left( \sum_{\ell \geq 1} \sum_{|m| \leq \ell}  \frac{1}{(\ell +1)(2 \ell +1)} \langle \bm{f} ,  \nabla_\sigma  Y_{\ell,  m}  \rangle   Y_{\ell ,m}  \right).$$
By  Theorem \ref{theorem_bakry},
\begin{align}\label{pasoprevio}
\nonumber & \int_{\mathbb{S}} \left|  \bm{T}_4 (\bm{f}) (\xi)   \right|^p d \sigma (\xi)  \\ \nonumber & \lesssim \int_{\mathbb{S}} \left| \left(-\Delta \right)^{1/2}    \left( \sum_{\ell \geq 1} \sum_{|m| \leq \ell} \frac{1}{(\ell +1)(2 \ell +1)} \langle \bm{f} ,  \nabla_\sigma  Y_{\ell k}  \rangle   Y_{\ell k} (\cdot) \right) (\xi)  \right|^p d \sigma (\xi)  \\\nonumber
 & = \int_{\mathbb{S}} \left|  \sum_{\ell \geq 1} \sum_{|m| \leq \ell}  \frac{\ell^{1/2} \left( \ell + 1   \right)^{1/2}}{(\ell +1)(2 \ell +1)} \langle \bm{f} ,  \nabla_\sigma  Y_{\ell ,m}  \rangle   Y_{\ell ,m} (\xi)   \right|^p d \sigma (\xi) \\
 &=\int_{\mathbb{S}} \left|  \sum_{\ell \geq 1} \sum_{|m| \leq \ell}  \frac{\ell^{1/2} \left( \ell + 1   \right)^{1/2}}{(\ell +1)(2 \ell +1)} \langle \bm{f} ,  \nabla_\sigma  Y_{\ell ,m}  \rangle \bm{Y}_{\ell ,m}^{\vee}(\xi)\right|^p d \sigma (\xi).
\end{align}

Let  $\bm{\widetilde{T}}_2$ be   the operator defined by
\begin{equation*}\label{T_4_2}
\bm{\widetilde{T}}_2( \xi)=\sum_{\ell \geq 1} \sum_{|m| \leq \ell} \frac{\ell^{1/2} \left( \ell + 1  \right)^{1/2}}{(\ell +1)(2 \ell +1)} \langle \mathbf{f} ,  \nabla_\sigma  Y_{\ell ,m}  \rangle \bm{Y}_{\ell ,m}^{\vee}(\xi),
\end{equation*}
and its adjoint
\begin{equation*}\label{T_4_2}
\bm{\widetilde{T}}_3( \xi)=\sum_{\ell \geq 1} \sum_{|m| \leq \ell} \frac{\ell^{1/2} \left( \ell + 1   \right)^{1/2}}{(\ell +1)(2 \ell +1)} \langle \mathbf{f} , \bm{Y}_{\ell ,m}^{\vee} \rangle \nabla_\sigma  Y_{\ell ,m}(\xi) .
\end{equation*}

As done for $\bm{T}_3$, it can proved that $\bm{\widetilde{T}}_3$ is $\bm{L}^p-$bounded for any $1<p<\infty$, and hence the same holds for  $\bm{\widetilde{T}}_2$.
Thus from \eqref{pasoprevio},
\begin{equation*}
\Vert\bm{T}_4 (\bm{f})\Vert_{\bm{L}^p} \lesssim\Vert\bm{f}\Vert_{\bm{L}^p}.
\end{equation*}
This completes the proof that $ \bm{\pi}_+$ is bounded in $\bm{L}^p(\S)$. The proof of the $L^p$- continuity of $ \bm{\pi}_-$ is completely analogous, and since $ \bm{\pi}_0=Id- \bm{\pi}_+- \bm{\pi}_-$ 
on $ \bm{C}^\infty (\S)$, the proof of the theorem is complete.

 \hfill $\square$

Acknowledgement. We thank Fabricio Macía for helpful conversations and for sharing his notes on vector spherical harmonics.

\vskip 0.08in
\noindent --------------------------------------
\vskip 0.10in
\begin{minipage}[t]{7.7cm}
\noindent {\tt Juan Antonio Barcel\'o}

\noindent Departamento de Matem\'atica e Inform\'atica 

\noindent aplicadas a las Ingenier{\'i}as Civil y Naval

\noindent Universidad Polit\'ecnica de Madrid

\noindent Madrid, 28040, Spain

\noindent {\tt e-mail}: {\it juanantonio.barcelo@upm.es}

\vskip 0.20in

\noindent {\tt Emilio Marmolejo-Olea}

\noindent Instituto de Matem\'aticas Unidad Cuernavaca 

\noindent Universidad Nacional Aut\'onoma de M\'exico

\noindent A.P. 273-3 Admon. 3,

\noindent Cuernavaca, Morelos, 62251 M\'exico

\noindent {\tt e-mail}: {\it emiliomo\@@im.unam.mx}

\vskip 0.20in

\end{minipage}
\hfill
\begin{minipage}[t]{7.7cm}

\noindent {\tt Salvador Per\'ez-Esteva}

\noindent  Instituto de Matem\'aticas Unidad Cuernavaca 

\noindent Universidad Nacional Aut\'onoma de M\'exico

\noindent A.P. 273-3 Admon. 3,

\noindent  Cuernavaca, Morelos, 62251 M\'exico

\noindent {\tt e-mail}: {\it spesteva@im.unam.mx }

\vskip 0.20in

\noindent {\tt Mary Cruz Vilela}

\noindent Departamento de Matem\'atica e Inform\'atica 

\noindent aplicadas a las Ingenier{\'i}as Civil y Naval

\noindent Universidad Polit\'ecnica de Madrid

\noindent Madrid, 28040, Spain

\noindent {\tt e-mail}: {\it maricruz.vilela@upm.es}
\end{minipage}

\begin{thebibliography}{9}
\bibitem {Axler}
  Axler, S., Bourdon, P.,   Ramey, W.,  
\textit{Harmonic Functions Theory}.
Springer, Graduate texts in mathematics. Second edition, 2001.

 \bibitem {Bakry} 
 Bakry, D., 
\'{E}tude des tranformations de Riesz dans les vari\'{e}tes riemannienes \`{a} courbure de Ricci
minor\'{a}e,
\textit{S\'{e}minaire de Probabilit\'{e}s}, Lecture Notes in Math. 1247, 1987.

\bibitem{Colzani T} Colzani, L., \textit{Hardy and Lipschitz spaces on unit spheres}, Thesis (Ph.D.)–Washington University in St. Louis, 1982.
\bibitem{Colzani} Colzani, L., Hardy spaces on unit spheres, Boll. Un. Mat. Ital. C (6) \textbf{4}(1985), no. 1, 219–244.

\bibitem{Diestel}Diestel, J., Uhl, J. J., {\it{Vector measures}}, Math. Surveys  15, American Mathematical Society, Providence, RI, 1977.
\bibitem{Duandik} Duoandikoetxea, J., \textit{Fourier analysis}, Grad. Stud. Math., 29, American Mathematical Society, Providence, RI, 2001.

\bibitem{Freeden} Freeden, W.,  Schreiner, M., \textit{Spherical functions of mathematical geosciences: a scalar, vectorial, and tensorial setup}, Second Edition, Birkhauser, 2022.


\bibitem{KG} Kor\'anyi, A., V\'agi, S., Cauchy-Szeg\"{o} integrals for systems of harmonic functions,  \textit{Ann. Scuola Norm. Sup. Pisa Cl. Sci.}  \textbf{26}(3)(1972), 181–196. 
\bibitem{Ku}  Kupradze, V.D., \textit{Three dimensional problems of mathematical theory of elasticity and thermoelasticity}, North-Holland, Amsterdam,  1979.
\bibitem{MitreaD} Mitrea, D. \textit{Distributions, partial differential equations, and harmonic analysis}. Universitext. Springer, New York, 2013.
\bibitem{MMMM} Martell, J.M, Mitrea D., Mitrea, I and Mitrea, M.,  A Fatou theorem and Poisson’s integral representation formula for elliptic systems in the upper half space, {\it{Topics in Clifford Analysis: Special Volume in Honor of Wolfgang Spr\"{o}ßig}},   Swanhild Bernstein editor, Trends in Mathematics (TM),  Birkhäuser Cham, Ch. 5, p. 104-124,  2019. 
\bibitem {Neri}
  Neri, U., 
\textit{Singular integrals},
Springer-Verlag, Lectures notes 200, 1971.
\bibitem{Stein} Stein, E. M., {\it{Singular Integrals and Differentiability Properties of Functions}}. Princeton University Press, 1970. 


\bibitem{Str}
 Strichartz, R. S., Multipliers for spherical harmonic expansions, \textit{Trans. Amer. Math. Soc. }\textbf{167}(1972), 115–124.

\end{thebibliography}
\end{document}